\documentclass[10pt]{amsart}
\usepackage[english]{babel}
\usepackage[pdfborder={0 0 0}]{hyperref}
\usepackage{amsmath, amssymb, amsthm}
\usepackage{stmaryrd}
\usepackage{mathtools}
\usepackage{enumitem}
\usepackage{tikz}
\usepackage{tikz-cd}
\usepackage{colonequals}
\usepackage{fullpage}
\usepackage{bbm}

\theoremstyle{plain}                            
\newtheorem{theorem}{Theorem}[section]
\newtheorem{corollary}[theorem]{Corollary}
\newtheorem{lemma}[theorem]{Lemma}
\newtheorem{proposition}[theorem]{Proposition}
\newtheorem*{theorem*}{Theorem}

\theoremstyle{definition}

\newtheorem{remark}[theorem]{Remark}

\title{Beyond the Giampietro--Darmon conjecture}

\date{\today}

\author{Michael A. Daas}
\address{Department of Mathematics, Universit\'e du Luxembourg, L-4364 Esch-sur-Alzette, Luxembourg}
\email{michael.daas@uni.lu}

\begin{document}

\begin{abstract}
In \cite{Giam}, Giampietro and Darmon conjectured a formula for the norm of various algebraic numbers, obtained as infinite products of $p$-adic cross-ratios of CM points. These quantities arose from the $p$-adic uniformisation of Shimura curves and displayed strong parallels with the Gross--Zagier factorisation for the norms of the differences between two singular moduli from \cite{GZ}. The conjectured formula was conditional on the genus of the Shimura curve being zero, and in \cite{daas1}, this formula was proved in most cases. In this work, we extend the validity of the factorisation formula beyond what was conjectured by Giampietro and Darmon to many more cases, by relating this to the genus of an Atkin--Lehner quotient of the Shimura curve being zero instead. To this end, we solve a $p$-inverted version of a counting problem that was previously considered in work of Howard and Yang \cite{HY}.
\end{abstract}

\maketitle

\setcounter{tocdepth}{1}
\tableofcontents

\section{Introduction}\label{intro}

Let $N > 1$ be a squarefree integer supported at an even number of primes and let $\mathfrak{B}$ be the indefinite rational quaternion algebra ramified precisely at the primes dividing $N$. It has a maximal order $\mathfrak{R}$ that is unique up to conjugation. If we choose a splitting $\mathfrak{B} \xhookrightarrow{} M_2( \mathbb{R} )$, the subgroup $\mathfrak{R}_1 \subset \mathfrak{R}^{\times}$ consisting of all elements of unit norm acts on the complex upper half plane $\mathcal{H}$ through M\"obius transformations. Then
\[
X_N( \mathbb{C} ) = \mathfrak{R}_1 \setminus \mathcal{H}
\]
is compact and called a Shimura curve of level $N$. It is equipped with an action of the Atkin--Lehner group, consisting of commuting involutions $\text{w}_d$ for all positive $d \mid N$. One can show that $X_N$ is in fact an algebraic curve and has a model defined over $\mathbb{Q}$. As there are formulas available for its genus, see for example \cite{ogg83} or Proposition 2.2 in \cite{oana}, one may show that the curve $X_N$ is of genus 0 if and only if $N \in \{ 6, 10, 22 \}$.

The curve $X_N$ has bad semistable reduction at all prime divisors of $N$. Choose a prime divisor $p$ of $N$ and write $N = pM$ for some positive integer $M$. A theorem of \v{C}erednik and Drinfeld describes an explicit $p$-adic uniformisation of $X_N$, which we recall now. Let $B$ denote the definite rational quaternion algebra ramified precisely at the primes dividing $M$ and infinity. Fixing a splitting $B \xhookrightarrow{} M_2( \mathbb{Q}_p )$, we obtain an action of $B^{\times}$ on the $p$-adic upper half plane $\mathcal{H}_p = \mathbb{P}^1(\mathbb{C}_p) - \mathbb{P}^1(\mathbb{Q}_p)$ by M\"obius transformations. Let $\mathcal{R} \subset B$ be a maximal $\mathbb{Z}[1/p]$-order, which is, as we will recall later, unique up to conjugation. An element in $\mathcal{R}$ is a unit if and only if its norm is a power of $p$, so if we define $\overline{ \mathcal{R} }^{\times} = \mathcal{R}^{\times} / p^{\mathbb{Z}}$, the subgroup $\Gamma \subset \mathcal{R}^{\times}$, consisting of all elements of unit norm, may be viewed as an index 2 subgroup of $\overline{ \mathcal{R} }^{\times}$. The quotient 
\[
\Gamma \setminus \mathcal{H}_p
\]
is again compact, and, as \v{C}erednik and Drinfeld proved in \cite{cerednik, drinfeld}, over $\mathbb{C}_p$, it is isomorphic to $X_N$, with the isomorphism itself being defined over $\mathbb{Q}_{p^2}$, the unique quadratic unramified extension of $\mathbb{Q}_p$. Through this isomorphism, the Galois action of elements that do not fix $\mathbb{Q}_{p^2}$ on $\Gamma \setminus \mathcal{H}_p$ is twisted by the action of the Atkin--Lehner operator $\text{w}_p$ on the Shimura curve $X_N$. For a detailed exposition of this result, we refer the reader to \cite{boucar}. For the statements of some key facts, see Section 2.3 of \cite{oana}.

These curves have a natural supply of CM points, which we now describe. Let $K$ be an imaginary quadratic field with maximal order $\mathcal{O}$ of discriminant $\Delta < 0$. Given an embedding $\alpha : \mathcal{O} \xhookrightarrow{} \mathcal{R}$, we obtain an action of $K^{\times}$ on $\mathcal{H}_p$. Such embeddings $\alpha$ exist if and only if all primes dividing the discriminant $M$ of $B$ are non-split in $\mathcal{O}$, and if in addition $p$ is non-split in $\mathcal{O}$ as well, the image of $K^{\times}$ under each such embedding has two Galois conjugate fixed points that are contained in $\mathcal{H}_p$. A CM point of discriminant $\Delta$ on $\Gamma \setminus \mathcal{H}_p$ is then defined as the image of such a fixed point in this quotient.

The function field of $\Gamma \setminus \mathcal{H}_p$ is generated by so-called $p$-adic $\Theta$-functions, see \cite{GvdP}. Given two base points $w_1, w_2 \in \mathcal{H}_p$, one defines the function $\Theta(w_1, w_2; - ) : \mathcal{H}_p \to \mathbb{C}_p$ through the formula
\[
\Theta(w_1,w_2; z) \colonequals \prod_{\gamma \in \Gamma} \frac{z - \gamma w_1}{z - \gamma w_2}.
\]
There exists a group homomorphism $c_{w_1, w_2} : \Gamma \to \mathbb{C}_p^{\times}$, called the \emph{factor of automorphy}, such that
\[
\Theta( w_1, w_2; \gamma z ) = c_{w_1, w_2}( \gamma ) \Theta( w_1, w_2; z )
\]
for all $z \in \mathcal{H}_p$ and $\gamma \in \Gamma$. The homomorphisms $c_{w_1,w_2}$ factor through the quotient $\overline{ \Gamma } \colonequals \Gamma^{\text{ab}} / \Gamma^{\text{ab}}_{\text{tors}}$ of $\Gamma$, which is abstractly isomorphic to $\mathbb{Z}^g$, where $g = g(X_N)$ denotes the genus of the Shimura curve $X_N$. In particular, if $g = 0$, this automorphy factor is trivial and may be ignored. In this case, $\Theta(w_1, w_2; z )$ descends to a function on the quotient $\Gamma \setminus \mathcal{H}_p$.

In \cite{Giam}, Giampietro and Darmon investigated the following quantity. Fix two imaginary quadratic fields $K_1$ and $K_2$ with rings of integers $\mathcal{O}_1$ and $\mathcal{O}_2$ respectively. Let $D_1 < 0$ and $D_2 < 0$ denote their discriminants and for $i \in \{ 1,2 \}$, set $w_i = \# \mathcal{O}_i^{\times}$. We assume that $\text{gcd}(D_1, D_2) = 1$ and we suppose that $p$ is inert in both $K_1$ and $K_2$, as well as all primes that ramify in $B$. Then there exist embeddings $\alpha_1 : \mathcal{O}_1 \xhookrightarrow{} \mathcal{R}$ and $\alpha_2 : \mathcal{O}_2 \xhookrightarrow{} \mathcal{R}$. For $i \in \{ 1,2 \}$, let $\tau_i, \tau_i' \in \mathcal{H}_p$ be the two CM points associated with $\alpha_i$. Now consider the quantity
\begin{equation} \label{maineq}
\frac{ \Theta( \tau_2, \tau_2'; \tau_1 ) }{ \Theta( \tau_2, \tau_2'; \tau_1') } = \prod_{ \gamma \in \Gamma } \frac{( \tau_1 - \gamma \tau_2 )( \tau_1' - \gamma \tau_2' )}{ ( \tau_1' - \gamma \tau_2 )( \tau_1 - \gamma \tau_2') }.
\end{equation}
If $N \in \{ 6, 10, 22 \}$, as now there is no factor of automorphy, the $\Theta$-functions in the expression above descend to functions on $\Gamma \setminus \mathcal{H}_p$. As in \cite{Giam}, one may then relate this infinite product to the cross-ratio of the CM values of some modular function $j_N : X_N \xrightarrow{ \sim } \mathbb{P}^1$, which are known to be algebraic by Shimura reciprocity. The authors conjectured an explicit prime factorisation formula for the norm of this algebraic number, which strongly mimics the prime factorisation of the differences between singular moduli proved by Gross and Zagier in \cite{GZ}; see Theorem \ref{mainthm} below. This archimedean version of the conjecture was first proved in \cite{daas1}, supported by the work of Phillips \cite{anphil}. Subsequently, a different proof was given by Crabit Nicolau \cite{crabit}, that displays close parallels with the analytic proof in \cite{GZ}. 

The work \cite{daas1} also contains a purely $p$-adic computation of the norm of the quantity in Equation \ref{maineq}, using a method that can be divided into three steps. First, one observes that, as a consequence of our symmetrisation, both the numerator and the denominator of the right hand side of Equation \ref{maineq} are contained in the real quadratic field $F = \mathbb{Q}( \sqrt{ D } )$, where $D = D_1D_2 > 0$. In fact, the fraction can be rewritten as $q_F( \gamma ) / q_F'( \gamma )$, where $q_F, q_F' : B \to F$ are two Galois conjugate $F$-quadratic forms that we will explore in detail in Section \ref{qFsec}. One may then rewrite Equation \ref{maineq} as
\begin{equation}\label{rewrite}
\frac{ \Theta( \tau_2, \tau_2'; \tau_1 ) }{ \Theta( \tau_2, \tau_2'; \tau_1') } = \prod_{ \nu \in F } \left( \frac{ \nu }{ \nu' } \right)^{\# \{ \gamma \in \Gamma \mid q_F( \gamma ) = \nu \} }.
\end{equation}
The first step is to relate the exponents in the above expression to the Fourier coefficients of a parallel weight $(1,1)$ Hilbert Eisenstein series $E_{1, \chi}$, which can be expressed in terms of divisor sums over $F$-ideals.

The second step is to prove an $R = T$-theorem, which we will not concern ourselves with here. This facilitates the third step, which is to study the infinitesimal $p$-adic deformation theory of the Galois representation associated with a particular $p$-stabilisation $E_{1, \chi}^{(p)}$ of $E_{1,\chi}$. This is a $p$-adic \emph{cusp form}, and one may construct an explicit infinitesimal cuspidal Hida family $E^{(p)}_{1,\chi}(\epsilon)$ specialising to $E^{(p)}_{1,\chi}$. Mimicking the analytic proof of Gross--Zagier in \cite{GZ}, the formula for the norm was a consequence of the fact that 
\begin{equation}\label{1linepf}
f_{\text{GD}} \colonequals e^{\text{ord}}\left( \frac{d}{d\epsilon} E^{(p)}_{1,\chi}(z,z; \epsilon) \right) \in \mathcal{S}_2( \Gamma_0(N) ),
\end{equation}
must vanish. We make two remarks regarding this work.

The first is that the proof presented in \cite{daas1} excluded the case that $N = 22$ and $p = 2$, in which case the quaternion algebra $B$, which now has discriminant 11, is not of class number 1. Even though in \cite{Giam}, the authors did not present experimental evidence for their conjecture in this case, as their method of $p$-adically approximating the infinite product from Equation \ref{maineq} relied on the class number 1 assumption, they nonetheless conjectured that the value from Equation \ref{maineq} should still be algebraic, with its norm as prescribed. The complications caused by the existence of various non-conjugate maximal orders also affected the first step in the proof of \cite{daas1} sketched above, as we will explain later in detail.

The second is that the proof in \cite{daas1} showed that, in fact, an obstruction to the algebraicity of the value from Equation \ref{maineq} should not be the mere existence of cusp forms in $\mathcal{S}_2( \Gamma_0(N) )$, as Equation \ref{1linepf} might suggest, but rather the existence of a $U_p$-eigenform in this space with eigenvalue $-1$, suggesting that the original conjecture by Giampietro and Darmon could be true in this level of, or even greater, generality. 

The purpose of this work is to address both of these remarks. To state our main result, we should introduce some more notation. As we will recall in Section \ref{qFsec}, for $i \in \{ 1,2 \}$ the class group $\text{Pic}(K_i)$ acts on the set of $\Gamma$-conjugacy classes of embeddings $\alpha_i : \mathcal{O}_i \xhookrightarrow{} \mathcal{R}$ and by extension also on the set of CM points of discriminant $D_i$ on the curve $\Gamma \setminus \mathcal{H}_p$. Then, if we let $\pi \in \overline{\mathcal{R}}^{\times} - \Gamma$ be any element, we define
\[
\Theta( D_1, D_2 ) \colonequals \prod_{ \substack{ c_1 \in \text{Pic}(K_1) \\ c_2 \in \text{Pic}(K_2) } } \frac{\Theta(c_2 \cdot \tau_2, c_2 \cdot \tau_2'; c_1 \cdot \tau_1)}{\Theta( c_2 \cdot \tau_2, c_2 \cdot \tau_2'; c_1 \cdot \tau_1')},
\]
and similarly $\Theta_p( D_1, D_2 )$ with $\tau_2$ replaced by $\pi \tau_2$. Recall that $\text{w}_N$ denotes the appropriate Atkin--Lehner involution acting on $X_N$. The following is our main result, and is a direct extension of both the main conjecture from \cite{Giam} and the main theorem from \cite{daas1}.

\begin{theorem}\label{mainthm}
Suppose that the genus of $X_N / \emph{w}_N$ is zero. Then
\[
\left( \frac{\Theta( D_1, D_2 )}{\Theta_p( D_1, D_2 )} \right)^{\frac{\pm 2}{w_1w_2}} = \pm \prod_{\substack{x^2 < D \\ x^2 \equiv D \emph{ mod } 4N}} \mathfrak{F}\left( \frac{D-x^2}{4N} \right)^{\delta(x)}.
\]
\end{theorem}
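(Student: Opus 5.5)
The plan is to follow the three-step strategy of \cite{daas1}, but to work throughout with the full group $\overline{\mathcal{R}}^{\times}$ rather than $\Gamma$, so that the relevant space of weight two cusp forms is cut down to its $\text{w}_N$-anti-invariant part. First we combine the two $\Theta$-quotients. Since $\pi \in \overline{\mathcal{R}}^{\times} - \Gamma$ and $\Gamma$ has index $2$ in $\overline{\mathcal{R}}^{\times}$, the ratio $\Theta(D_1,D_2)/\Theta_p(D_1,D_2)$ becomes a single product over $\gamma \in \overline{\mathcal{R}}^{\times}$, with a sign twist $(-1)^{v_p(\text{n}(\gamma))}$ on the exponents. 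Using the analogue of Equation \ref{rewrite}, we then rewrite its logarithm as
\[
\sum_{\nu \in F} \Big( \sum_{c_1,c_2}\ \sum_{\substack{\gamma \in \mathcal{R}/p^{\mathbb{Z}} \\ q_F(\gamma)=\nu}} (-1)^{v_p(\text{n}\gamma)} \Big) \log(\nu/\nu').
\]
The sum over $\text{Pic}(K_1)\times\text{Pic}(K_2)$ lets us replace the single order $\mathcal{R}$ by a sum over all pairs of optimal embeddings. This removes the class number one issue in the case $N=22$, $p=2$: the classes of maximal orders are averaged in the same way as in the Siegel--Weil setup of Howard--Yang.

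The second step is the $p$-inverted Howard--Yang counting problem. For each totally positive $\nu$ of the appropriate shape, we count, with the above signs, the elements $\gamma$ in the $\mathbb{Z}[1/p]$-lattice $\mathcal{R}$, modulo $p^{\mathbb{Z}}$, with $q_F(\gamma)=\nu$. We would do this locally: at primes $\ell \neq p$ by the usual orbital computation as in \cite{HY}, giving divisor-type factors $\rho(\nu\mathfrak{d}\ell^{-k})$ (depending on ramification in $F$ and in $B$), and at $p$ by a separate calculation. At $p$, inverting $p$ and quotienting by $p^{\mathbb{Z}}$ collapses the $p$-part of the divisor sum, and the sign $(-1)^{v_p}$ turns it into the $p$-stabilised coefficient. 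The goal is to show that the exponent equals the $\nu$-th Fourier coefficient of $E^{(p)}_{1,\chi}$ twisted by the character of $\text{w}_p$, i.e.\ a $p$-stabilisation with $U_p$-eigenvalue $+1$ rather than $-1$. We expect this local count at $p$, together with the bookkeeping of $\nu$ versus $\nu'$ and the non-uniqueness of maximal orders, to be the main obstacle. Our first attempt would be to reduce to counting vertices and edges of the Bruhat--Tits tree fixed by the embedded tori, since $\mathcal{R}^{\times}$ acts on the tree and the $p$-adic valuation of the norm records parity of distance.

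With the coefficients identified, we then run the deformation argument of \cite{daas1}, taking the $R=T$ input as given. We construct the infinitesimal Hida family through this stabilisation and form the diagonal restriction of its $\epsilon$-derivative, followed by ordinary projection. This produces $f \in \mathcal{S}_2(\Gamma_0(N))$ lying in the $U_p$-eigenspace of eigenvalue $-1$. For forms new at $p$ one has $U_p = -\text{w}_p$, and the expression is $\text{w}_M$-invariant by construction, so $f$ lies in the part where $\text{w}_N$ acts by $-1$. By Jacquet--Langlands this part corresponds to holomorphic differentials on $X_N$ anti-invariant under $\text{w}_N$, whose dimension is $g(X_N) - g(X_N/\text{w}_N) = g(X_N)$, not zero. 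We must therefore refine: the sign from $\pi$ should place $f$ in the $\text{w}_N$-\emph{invariant} part, which has dimension $g(X_N/\text{w}_N) = 0$ by hypothesis, so $f = 0$. Getting this sign right is the crucial check.

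Finally, $f=0$ gives a linear relation among Fourier coefficients, exactly as in Equation \ref{1linepf}. Unwinding it via the $p$-adic logarithm, as in the Gross--Zagier style argument of \cite{daas1}, yields the factorisation with exponents $\delta(x)$ and factors $\mathfrak{F}\big((D-x^2)/4N\big)$, up to sign and up to the roots of unity absorbed by the exponent $\pm 2/(w_1w_2)$.
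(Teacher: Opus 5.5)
There is a genuine gap, and it sits exactly where this theorem goes beyond \cite{daas1}. The deformation argument only gives $U_p f_{\text{GD}} = -f_{\text{GD}}$ (visible from Theorem \ref{fourier}, since $\nu \mapsto p\nu$ flips $(-1)^{\text{ord}_{\mathfrak{p}}(\nu)}$). Under the hypothesis $g(X_N/\text{w}_N)=0$ the $U_p$-eigenspace with eigenvalue $-1$ can still be nonzero: for $N=15$, $p=3$, see Remark \ref{thetaalg}. So this alone cannot force vanishing, and a second, independent symmetry at $q = N/p$ is needed.

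You assert that $f_{\text{GD}}$ is ``$\text{w}_M$-invariant by construction'', but nothing in the construction gives this. It is the main new result of the paper, Proposition \ref{Uq-1}: $f_{\text{GD}}$ is a $U_q$-eigenform with eigenvalue $-1$. The paper proves it by showing $b_{qm} = -b_m$ for the coefficients before ordinary projection. Since $\mathfrak{q} \mid \nu$ and $\text{tr}(\nu) = qm$, also $\mathfrak{q}' \mid \nu$, so one can write $\nu = q\mu'$. One then compares $\rho(\widetilde{J}_\nu)$ and $\mathcal{F}(\widetilde{J}_\nu)$ with the corresponding quantities for $\widetilde{J}_\mu$. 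This is a case analysis on whether $\mathfrak{q}$ and $\mathfrak{q}'$ are special primes and on the parities of $\text{ord}_{\mathfrak{p}}$ and $\text{ord}_{\mathfrak{p}'}$. It uses the reflex ideal and the Galois symmetry $\nu \leftrightarrow \nu'$ in an essential way.

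Your sign bookkeeping is also off, and the proposed repair points the wrong way. If $f$ is new with $U_p f = -f$ and $\text{w}_q f = f$, then $\text{w}_p f = -U_p f = f$, so $\text{w}_N f = f$, not $-f$. Such an $f$ already lies in the $\text{w}_N$-invariant part, which Proposition \ref{equiv1} identifies with differentials on $X_N/\text{w}_N$. No ``sign from $\pi$'' is needed. Switching to a stabilisation with $U_p$-eigenvalue $+1$, as your second step proposes (inconsistently with your third), would move $f$ into the anti-invariant part, where the hypothesis says nothing.

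There is a further problem with old forms. $f_{\text{GD}}$ lives in all of $\mathcal{S}_2(\Gamma_0(N))$, while Jacquet--Langlands only sees $\mathcal{S}^{\text{new}}_2(N)$. Atkin--Lehner invariance at $q$ does not imply $q$-newness: for a newform $g$ of level $p$, the span of $g(z)$ and $g(qz)$ contains a nonzero $\text{w}_q$-invariant form. This is why the paper works with $U$-eigenvalues rather than Atkin--Lehner invariance. By Lemma \ref{Upnew}, the Ramanujan (Hasse--Weil) bound excludes $U_\ell$-eigenvalues $\pm 1$ on $\ell$-old forms. Hence $U_p f = -f = U_q f$ forces $f$ to be new, and Propositions \ref{equiv1} and \ref{no-1} then give $f_{\text{GD}} = 0$.

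Your sketch of the counting step, via local orbital counts and a Bruhat--Tits computation at $p$, is a plausible alternative to the paper's global bijection with $\mathcal{O}_L[1/p]$-ideals (Corollary \ref{finalcount}). But without the $U_q$-symmetry the argument cannot reach the stated generality.
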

Here the function $\mathfrak{F} : \mathbb{N} \to \mathbb{N}$ is an elementary function whose value on any positive integer is either $1$ or a prime power. Its definition can be found in either \cite{GZ} or \cite{daas1}; we will give an alternative description which will be more convenient for our purposes in Remark \ref{Fdef}. In this sense, the right hand side of the above formula yields a prime factorisation for the expression on the left hand side.  

Further, the value $\delta(x) \in \{ \pm 1 \}$ is a simple sign, and equal to $1$ if $x \equiv \pm a \mod 2N$ and $-1$ if $x \equiv \pm b \mod 2N$, where $\pm a, \pm b \mod 2N$ are the four square roots of $D$ modulo $4N$.

\begin{remark}\label{tableopm}
In \cite{daas1}, Theorem \ref{mainthm} was proved under the simultaneous assumptions that $B$ is of class number 1 and that $g(X_N) = 0$, which, as stated before, means that $N \in \{ 6, 10, 22 \}$ and $M = N / p$ is not equal to $11$. Therefore, our Theorem \ref{mainthm} is strictly stronger than the main result of \cite{daas1}. From the tables in \cite{oana}, it follows that the genus of $X_N / w_N$ is zero if and only if $N$ is one of
\[
\big\{ 6, 10, 14, 15, 21, 22, 26, 33, 34, 35, 38, 39, 46, 51, 55, 62, 69, 74, 86, 87, 94, 95, 111, 119, 134, 146, 159, 194, 206 \big\}.
\]
The genus of the curve $X_N$ for the values we are considering could be as high as $4$. The primes $p \mid N$ that we will be considering are now part of the set
\[
\big\{ 2, 3, 5, 7, 11, 13, 17, 19, 23, 29, 31, 37, 43, 47, 53, 67, 73, 97, 103 \big\},
\]
and therefore the class number of the quaternion algebra $B$ could be as high as $9$.  
\end{remark}

\renewcommand{\baselinestretch}{0.97}\normalsize

\begin{remark}
In the archimedean setting of \cite{crabit}, the reason for the vanishing of $f_{\text{GD}} \in \mathcal{S}_2( \Gamma_0(N) )$ for $N \in \{ 6, 10, 22 \}$ is the equation $\text{w}_N f_{\text{GD}} = f_{\text{GD}}$, combined with the non-existence of non-zero $\text{w}_N$-invariant cusp forms in this space. This is reflected by Theorem \ref{mainthm}, and using the fact that all divisors of the form $P - \text{w}_p P - \text{w}_q P + \text{w}_N P$ on $X_N$ are principal if $g( X_N / \text{w}_N ) = 0$, one may analogously define an archimedean quantity whose norm in this generality will equal the left hand side from Theorem \ref{mainthm}. From the $p$-adic perspective, the obstruction is instead given by cusp forms in the $-1$ eigenspaces for both $U_p$ and $U_q$, where $N = pq$. Proposition \ref{no-1} will show that these two conditions are in fact equivalent. This enriches the parallels with the classical work of Gross--Kohnen--Zagier \cite{GKZ}, where the height pairings between Heegner points are computed on the Jacobian of $X_0(N) / \text{w}_N$, as opposed to that of the modular curve $X_0(N)$ itself. 
\end{remark}

\begin{remark}
It is tempting to conjecture that the left hand side of Theorem \ref{mainthm} is algebraic, or rational, \emph{if and only if} the genus of $X_N / \text{w}_N$ is zero. This, together with the results from Section \ref{genussec}, would explain why one cannot expect the statement from Theorem \ref{mainthm} to hold without modification beyond the cases we consider here. So, in this sense, Theorem \ref{mainthm} is expected to be as strong as it can be without adjusting the quantity of interest or invoking the action of Hecke operators.
\end{remark}

\begin{remark}\label{thetaalg}
An even more subtle question would be to ask when the values from Equation \ref{maineq} themselves are already algebraic, before taking the quotient with its companion quantity, in which $\tau_2$ is replaced by $\pi \tau_2$. In \cite{Giam}, it was observed that in the case of $N = 34$ and $p =17$, the values of Equation \ref{maineq} are still algebraic, and their conjectured formula for the norm still seemed valid. Numerical experiments indicate that the same happens for $N = 15$ and $p = 3$. However, one fails to recognise the value of Equation \ref{maineq} as an algebraic number in the case of $N = 15$ and $p = 5$, whereas the quotient by its companion quantity seems algebraic, as predicted by Theorem \ref{mainthm}. The unique newform $f \in \mathcal{S}_2( \Gamma_0( 15 ) )$ satisfies $U_3 f = -f$ and $U_5 f = f$, suggesting a connection between these eigenvalues and the algebraicity of the values from Equation \ref{maineq}.
\end{remark}

To prove Theorem \ref{mainthm}, we first overcome the complications caused by the existence of multiple non-conjugate maximal orders in $B$ by working over $\mathbb{Z}[1/p]$ instead of integrally. To this end, in Section \ref{seqsec}, we will prove a $p$-inverted version of a well-known exact sequence from genus theory, and in Section \ref{qFsec}, we explore in detail the properties of various $F$-quadratic forms $q_F : B \to F$. 

As we will explain in Section \ref{HYsec}, the counting problem from Equation \ref{rewrite} that we are to solve has been previously considered by Howard and Yang in \cite{HY}. In Chapter 4 of the author's PhD thesis \cite{thesis}, the result from Howard and Yang was made explicit using a concrete bijection, under the assumption of a unique maximal order in $B$. We generalise this bijection to fit the $p$-inverted setting in Section \ref{Zpsec}, which is valid in all definite rational quaternion algebras $B$. In fact, we obtain a more refined result in Theorem \ref{bestcount} than what we will ultimately need, which is Corollary \ref{finalcount}. This may support future refinements of results about the values from Equation \ref{maineq} beyond their algebraic norms.

We explore various equivalent conditions for the genus of $X_N / \text{w}_N$ being zero in Section \ref{genussec}. Finally, in Section \ref{thetasec}, we will use the newly obtained generality in which we can rewrite Equation \ref{maineq} to go beyond the Giampietro--Darmon conjecture, and explain its connection to the genus of $X_N / \text{w}_N$. 
For this, we will rely strongly on the results obtained in \cite{daas1}, and we will recall what we will need from this work.

\begin{remark}\label{monster}
In \cite{pDV1}, Darmon and Vonk proposed a $p$-adic analogue in the RM setting of the differences between singular moduli as studied in \cite{GZ}; a certain \emph{rigid meromorphic cocycle} for the group $\text{SL}_2( \mathbb{Z}[1/p] )$, whose special values conjecturally display for real quadratic fields factorisations of similar intricacy to those considered hitherto. The \emph{monstrous} primes, which are those primes $p$ for which the genus of the quotient $X_0(p) / \text{w}_p$ is zero, play a key role in this work. Here $X_0(p)$ denotes the modular curve of level $\Gamma_0(p)$, and the list of monstrous primes is finite and given by $\{ 2, 3, 5, 7, 11, 13, 17, 19, 23, 29, 31, 41, 47, 59, 71 \}$. 

Interestingly, as explained in Remark \ref{genusbound}, if the genus of the quotient $X_0(N) / \text{w}_N$ is zero, then so is the genus of $X_N / \text{w}_N$ and Theorem \ref{mainthm} applies. Concretely, this happens for $N \in \{ 6, 10, 14, 15, 21, 26, 35, 39 \}$.

The analogy with the RM setting is further enriched by recent work by Damm-Johnsen \cite{damm}, in which the $F$-quadratic form $q_F$ that we will study in Section \ref{qFsec} is considered in the RM setting intead. Specifically, we refer the reader to its Section 3.1 for more details.
\end{remark}
\begin{remark}
Recently, the constructions from Darmon and Vonk were generalised to more general orthogonal groups associated with rational quadratic spaces in \cite{RMC}. If the rational quadratic space in question is of real signature $(r,s)$, then the theory predicts arithmetically rich rigid meromorphic $s$-cocycles for various $p$-arithmetic groups. From this perspective, values as those in Equation \ref{maineq} can be viewed as the special values of a rigid meromorphic $0$-cocycle, as $0$-cocycles in this context are simply $\Gamma$-invariant functions.
\end{remark}

\renewcommand{\baselinestretch}{1.00}\normalsize

\section{A $p$-inverted exact sequence}\label{seqsec}

Recall that throughout, we fix two distinct imaginary quadratic fields $K_1$ and $K_2$ with maximal orders $\mathcal{O}_1$ and $\mathcal{O}_2$ respectively. We let $D_1, D_2 < 0$ denote their discriminants and let $L = \mathbb{Q}( \sqrt{D_1}, \sqrt{D_2}) = K_1K_2$ be the biquadratic compositum of $K_1$ and $K_2$. There is a quadratic subfield of $L$ which is totally real, and if we set $D = D_1 D_2 > 0$, we will denote this field by $F = \mathbb{Q}( \sqrt{D} )$. The maximal orders in the fields $F$ and $L$ are denoted by $\mathcal{O}_F$ and $\mathcal{O}_L$ respectively.

We will assume that $\text{gcd}( D_1, D_2 ) = 1$. Then the field extension $L/F$ is unramified at all finite places, and therefore it induces a genus character $\chi \colon \text{Pic}(F)^+ \to \{ \pm 1 \}$, where $\text{Pic}(F)^+$ denotes the narrow class group of $F$. Finally, for any subset $X \subset F$, we let $X^+ \subset X$ denote the subset of totally positive elements of $X$. 

The following exact sequence was used by Gross and Zagier in Section 6 of \cite{GZ} as part of their analytic proof of the formula for the norm of the difference between two singular moduli, where the exactness of the sequence was attributed to the work of Hasse. For a modern account of it, see Section 4.3 of \cite{thesis}.

\begin{theorem}\label{exactseqold}
The following sequence is exact:
\[
1 \to \mathcal{O}_1^{\times} \mathcal{O}_2^{\times} \to \mathcal{O}_L^{\times} \xrightarrow{ \emph{Nm}^L_F } \mathcal{O}_F^{\times, +} \xrightarrow{ \varphi } \emph{Pic}(K_1) \times \emph{Pic}(K_2) \xrightarrow{\emph{ext}} \emph{Pic}(L) \xrightarrow{ \emph{Nm}^L_F } \emph{Pic}(F)^+ \xrightarrow{ \chi } \{ \pm  1 \} \to 1.
\]
\end{theorem}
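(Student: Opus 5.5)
We first define the maps and check that each composite is trivial, and then prove exactness at each term, working from both ends towards the middle.

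The map $\varphi$ is defined as follows. Given $\varepsilon \in \mathcal{O}_F^{\times,+}$, since $L/F$ is unramified at all finite places and $\varepsilon$ is totally positive, Hasse's norm theorem (locally, a unit is a norm in an unramified extension, and total positivity handles the complex places of $L$) shows that $\varepsilon = \mathrm{Nm}^L_F(\beta)$ for some $\beta \in L^\times$. The ideal $\beta\mathcal{O}_L$ then satisfies $\beta\bar\beta = 1$ as ideals, where the bar denotes the nontrivial element of $\mathrm{Gal}(L/F)$. Since $L/F$ is unramified, Hilbert 90 for ideals writes $\beta\mathcal{O}_L = \mathfrak{a}/\bar{\mathfrak{a}}$, and via the norms $\mathrm{Nm}^L_{K_i}$ one produces a pair of classes $(c_1,c_2) \in \mathrm{Pic}(K_1)\times\mathrm{Pic}(K_2)$. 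A cleaner route, which we would adopt, is to set $\varphi(\varepsilon) = ([\mathrm{Nm}^L_{K_1}\mathfrak{b}], [\mathrm{Nm}^L_{K_2}\mathfrak{b}])$ for a suitable ideal $\mathfrak{b}$ with $\beta\mathcal{O}_L = \mathfrak{b}^{\sigma-1}$. We then check that this is well defined modulo the choices of $\beta$ and $\mathfrak{b}$. The maps $\mathrm{ext}$ and $\mathrm{Nm}^L_F$ are the obvious ones, and $\chi$ is the genus character, which is surjective by class field theory since $L/F$ is a nontrivial unramified quadratic extension (at finite places).

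Exactness at the two ends is handled by class field theory and unit groups.
\begin{enumerate}
\item \emph{Exactness at $\mathrm{Pic}(F)^+$.} By the Artin map, $\ker\chi$ is the image of the norm from $\mathrm{Pic}(L)$, since $L$ corresponds to $\ker\chi$ in the narrow Hilbert class field.
\item \emph{Exactness at $\mathcal{O}_L^\times$.} The units of $L$ have rank $1$, equal to that of $\mathcal{O}_F^\times$. A unit of norm $1$ to $F$ is, by the standard Hasse unit index argument, a root of unity or a product from $\mathcal{O}_1^\times\mathcal{O}_2^\times$. Since the $K_i$ are imaginary, their unit groups are torsion. Here one uses $\mathrm{Nm}(u) = u\bar u$ together with the fact that complex conjugation acts on $L$ by an element fixing $F$.
\end{enumerate}

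Exactness at $\mathcal{O}_F^{\times,+}$ and at $\mathrm{Pic}(K_1)\times\mathrm{Pic}(K_2)$ is where the real work lies. For $\mathcal{O}_F^{\times,+}$, one shows that $\varphi(\varepsilon)$ is trivial exactly when $\beta$ can be chosen to be a unit. For $\mathrm{Pic}(K_1)\times\mathrm{Pic}(K_2)$, the key identity is that the extension of $\mathrm{Nm}^L_{K_1}\mathfrak{b}\cdot\mathrm{Nm}^L_{K_2}\mathfrak{b}$ to $L$ equals $\mathfrak{b}^{1+\tau_1}\mathfrak{b}^{1+\tau_2}$. Here $\tau_1,\tau_2$ are the generators of $\mathrm{Gal}(L/K_1)$ and $\mathrm{Gal}(L/K_2)$, and $\sigma = \tau_1\tau_2$ generates $\mathrm{Gal}(L/F)$. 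Because the complex conjugations $\tau_i$ act on classes coming from the $K_i$ as inversion, this identity reduces to $\mathfrak{b}^{2+\tau_1+\tau_2}$, which is $\mathfrak{b}^{1-\sigma}$ times the principal ideal $\mathrm{Nm}^L_{\mathbb{Q}}\mathfrak{b}$, up to principal ideals. Conversely, if $\mathfrak{c}_1\mathfrak{c}_2\mathcal{O}_L = (\gamma)$ is principal, then $\varepsilon = \mathrm{Nm}^L_F(\gamma)/(\text{rational norm})$ is a totally positive unit with $\varphi(\varepsilon) = (c_1,c_2)$.

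Exactness at $\mathrm{Pic}(L)$ is the main obstacle: a class with trivial norm in $\mathrm{Pic}(F)^+$ must be shown to come from $K_1$ and $K_2$. We would first prove this by Tate cohomology, using $\hat H^{-1}(\mathrm{Gal}(L/F), \mathrm{Pic}(L))$ and the ambiguous class number formula, which holds since $L/F$ is unramified at finite places. Alternatively, we would reduce it to an order count: the sequence is exact everywhere except possibly at $\mathrm{Pic}(L)$, so one compares the alternating product of orders with the class number relation $h(L) = \tfrac{1}{2}Q\,h(K_1)h(K_2)h(F)$ (Herglotz/Kuroda), where $Q$ is the unit index $[\mathcal{O}_L^\times : \mathcal{O}_1^\times\mathcal{O}_2^\times\mathcal{O}_F^\times]$. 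Because the narrow and wide class groups of $F$ and the index $[\mathcal{O}_F^\times:\mathcal{O}_F^{\times,+}]$ compensate, the alternating product equals $1$, and exactness at the remaining term then follows. The delicate part is the bookkeeping of the units: $Q \in \{1,2\}$ has to be matched against $[\mathcal{O}_F^{\times,+}:\mathrm{Nm}\,\mathcal{O}_L^\times]$.
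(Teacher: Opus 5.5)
\textbf{Gap: the ``conversely'' step at $\mathrm{Pic}(K_1)\times\mathrm{Pic}(K_2)$.} You take $(c_1,c_2)\in\mathrm{Cap}$ with $\mathfrak c_1\mathfrak c_2\mathcal O_L=(\gamma)$, put $n=\mathrm{Nm}(\mathfrak c_1)\mathrm{Nm}(\mathfrak c_2)$, and assert that $\varphi(\gamma\bar\gamma/n)=(c_1,c_2)$. Nothing in the proposal supports this. Your $\varphi$ is evaluated through some $\beta$ with $\beta\bar\beta=\gamma\bar\gamma/n$, which amounts to choosing an element of $F$-norm $n$. Hasse's theorem gives no control over the ideal of such an element. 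Nor can $(\gamma)$ itself serve: its norm is $(n)$, so it is not of the form $\mathfrak b^{\sigma-1}$.

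What your identity $\mathfrak b^{2+\tau_1+\tau_2}=\mathfrak b^{1-\sigma}\cdot\mathrm{Nm}(\mathfrak b)$ actually gives is the opposite composite. Set $\psi(c_1,c_2)=\gamma\bar\gamma/n$, which is well defined modulo $\mathrm{Nm}^L_F(\mathcal O_L^\times)$. If $\mathfrak c_i=\mathrm{Nm}^L_{K_i}\mathfrak b$, then $\gamma=u\beta^{-1}\mathrm{Nm}(\mathfrak b)$ for a unit $u$, so $\psi(\varphi(\varepsilon))=\varepsilon^{-1}\equiv\varepsilon$ on the cokernel.

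To conclude you would still need $\psi$ to be injective on $\mathrm{Cap}$, and that is a genuine extra argument:
\begin{enumerate}
\item After dividing $\gamma$ by a unit, $\gamma\bar\gamma=n\in\mathbb Q$.
\item Then $\gamma/\tau_1(\gamma)$ is $\tau_2$-invariant, so it lies in $K_2$, and it has norm $1$.
\item Hilbert 90 for $K_2/\mathbb Q$ gives $\gamma\in K_1^\times K_2^\times$.
\item Because $\gcd(D_1,D_2)=1$, an ideal of $L$ extended from both $K_1$ and $K_2$ is extended from $\mathbb Q$. This forces $c_1=c_2=1$.
\end{enumerate}
Your order count for $\mathrm{Pic}(L)$ presupposes exactness at $\mathrm{Pic}(K_1)\times\mathrm{Pic}(K_2)$. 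So both middle exactness claims currently rest on this unproved assertion. The Tate-cohomology alternative is also underspecified: $\hat H^{-1}(\mathrm{Gal}(L/F),\mathrm{Pic}(L))$ is built from the kernel of $c\mapsto c\bar c$, which contains, but need not equal, the kernel of the norm to $\mathrm{Pic}(F)^+$.

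\textbf{Repair.} The cheapest fix reverses your order, and this is how the paper argues. Exactness at $\mathrm{Pic}(L)$ is not the hard part:
\begin{enumerate}
\item Suppose $\mathrm{Nm}^L_F(\mathfrak A)=(x)$ with $x\gg0$. Then $x$ has even valuation at every prime inert in $L/F$ and is positive at the real places.
\item So $x=\mathrm{Nm}^L_F(y)$ by Hasse's theorem.
\item Then $\mathfrak A y^{-1}$ has norm $1$, hence equals $\mathfrak b^{1-\sigma}$ by ideal Hilbert 90.
\item Your identity shows $[\mathfrak b^{1-\sigma}]=\mathrm{ext}([\mathrm{Nm}^L_{K_1}\mathfrak b],[\mathrm{Nm}^L_{K_2}\mathfrak b])$.
\end{enumerate}
You then have exactness at $\mathrm{Pic}(L)$, $\mathrm{Pic}(F)^+$ and $\{\pm1\}$. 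You also have injectivity of $\varphi$ on $\mathcal O_F^{\times,+}/\mathrm{Nm}^L_F(\mathcal O_L^\times)$, which your identity does give. With these, your alternating-product count yields exactness at $\mathrm{Pic}(K_1)\times\mathrm{Pic}(K_2)$, using $h_L=\tfrac Q2h_1h_2h_F$, $h_F^+=2h_F$ and a cokernel of order $2/Q$.

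This is the paper's scheme: class field theory on the right, then the zeta relation to get $\#\mathrm{Cap}=\#\mathrm{coker}$. The paper then fixes $\varphi(\epsilon_F)$ explicitly through $y_F$ and the ramified ideals $I_1,I_2$. Since both maps are isomorphisms between groups of order at most $2$, your abstract $\varphi$ necessarily agrees with it.

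\textbf{Smaller points.}
\begin{enumerate}
\item $\tau_1,\tau_2$ are not complex conjugation on $L$; that is $\sigma$.
\item Exactness at $\mathcal O_L^\times$ is not a unit-index argument. It is Kronecker's theorem ($u\bar u=1$ in the CM field $L$ forces $u\in\mu(L)$) together with $\mu(L)=\mu(K_1)\mu(K_2)$. The latter again uses $\gcd(D_1,D_2)=1$, to exclude $L=\mathbb Q(\zeta_8)$.
\end{enumerate}
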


All the maps in the above sequence are self-explanatory, with the exception of the map 
\[
\varphi : \mathcal{O}_F^{\times, +} \to \text{Pic}(K_1) \times \text{Pic}(K_2),
\]
whose definition relies on genus theory. Exactness on the left of the map $\varphi$ is elementary, whereas exactness on the right is a consequence of class field theory. 

Let $\epsilon_F$ denote the fundamental unit of $F$. Then $\epsilon_F \gg 0$ as a result of the existence of the unramified CM extension $L$ of $F$, and so $\mathcal{O}_F^{\times, +} = \langle \epsilon_F \rangle$. As $\text{Nm}^L_F( \epsilon_F ) = \epsilon_F^2$, the cokernel of the norm map $\text{Nm}^L_F : \mathcal{O}_L^{\times} \to \mathcal{O}_F^{\times, +}$ is of size at most 2. We relate this cokernel to the kernel of the extension map,
\[
\text{Cap} \colonequals \text{ker}\big( \text{ext} : \text{Pic}(K_1) \times \text{Pic}(K_2) \to \text{Pic}( L ) \big),
\]
which consists of those pairs of ideal classes whose product \emph{capitulates} in $L$. The key input is analytic; between the zeta-functions of the fields in our field diagram, we have the relation
\[
\zeta_L(s) \cdot \zeta(s)^2 = \zeta_{K_1}(s) \cdot \zeta_{K_2}(s) \cdot \zeta_F(s),
\]
where $\zeta = \zeta_{\mathbb{Q}}$ is the Riemann zeta-function. If $\epsilon_L \in \mathcal{O}_L^{\times}$ denotes a fundamental unit and $h_1, h_2, h_F, h_F^+$ and $h_L$ denote the sizes of the groups $\text{Pic}(K_1)$, $\text{Pic}(K_2)$, $\text{Pic}(F)$, $\text{Pic}(F)^+$ and $\text{Pic}(L)$ respectively, then from the analytic class number formula, using that $h_F^+ = 2h_F$ as $\epsilon_F \gg 0$, we obtain
\[
2|\log|\epsilon_L|| h_L = |\log|\epsilon_F||h_1h_2h_F \implies \# \text{Cap} = \frac{h_1h_2h_F^+}{2h_L} = 2\frac{|\log|\epsilon_L||}{|\log|\epsilon_F| |} \in \{ 1,2 \}.
\]
It follows that the size of the cokernel of the norm map $\text{Nm}^L_F : \mathcal{O}_L^{\times} \to \mathcal{O}_F^{\times, +}$ is equal to $\# \text{Cap}$. 

There are therefore two cases to consider. If both of these groups are trivial, then $\varphi$ is the zero map. If not, then we must send $\epsilon_F \in \mathcal{O}_F^{\times}$ to the unique pair of ideals in $\text{Cap}$. Genus theory describes this pair. 
 
As $\epsilon_F \gg 0$, its norm is 1 and Hilbert 90 ensures the existence of some $y_F \in \mathcal{O}_F$ such that $y_F / \sigma( y_F ) = \epsilon_F$, where $\sigma$ denotes the non-trivial element in $\text{Gal}( F / \mathbb{Q} )$. An elementary argument shows that one can choose such an element $y_F$, uniquely up to sign, in such a way that its norm divides $D = D_1 D_2$. Let $I_1 \subset \mathcal{O}_1$ be the ideal constructed as the product of the prime ideals in $\mathcal{O}_1$ above the (ramified) rational primes dividing the number $\text{gcd}(D_1, \text{Nm}(y_F))$, and similarly $I_2$. Then one can show that $(I_1, I_2)$ describes the non-trivial class in Cap. Whence we set $\varphi( \epsilon_F ) = ([I_1], [I_2]) \in \text{Pic}(K_1) \times \text{Pic}(K_2)$.  

Now let $p$ be a prime that is inert in both $K_1$ and $K_2$. It then splits into two primes $\mathfrak{p}$ and $\mathfrak{p}'$ in $F$, which must then both be inert in $L$ and extend to primes $\mathfrak{P}$ and $\mathfrak{P}'$. Since $\chi( [\mathfrak{p}] ) = -1$, the class $[ \mathfrak{p} ] \in \text{Pic}(F)^+$ must be of even order, say $2k$. Let $\beta \in \mathcal{O}_F^+$ be such that $\mathfrak{p}^{2k} = \beta \mathcal{O}_F$. Then
\[
\mathcal{O}_F[1/p]^{\times, +} = \langle \epsilon_F \rangle \times \langle p \rangle \times \langle \beta \rangle.
\]
We will be working with the index 2 subgroup
\[
\mathcal{O}_F[1/p]^* = \langle \epsilon_F \rangle \times \langle p^2 \rangle \times \langle \beta \rangle = \left\{ u \in \mathcal{O}_F[1/p]^{\times, +} \mid \text{ord}_{\mathfrak{p}}(u) \equiv \text{ord}_{\mathfrak{p}'}(u) \equiv 0 \mod 2 \right\} \subset \mathcal{O}_F[1/p]^{\times, +}.
\]
The purpose of this section is to prove the following $p$-inverted version of Theorem \ref{exactseqold} above:
\[
1 \to \mathcal{O}_1^{\times} \mathcal{O}_2^{\times} \to \mathcal{O}_L[1/p]^{\times} \xrightarrow{ \text{Nm}^L_F } \mathcal{O}_F[1/p]^{*} \xrightarrow{ \varphi } \text{Pic}(K_1) \times \text{Pic}(K_2) \xrightarrow{\text{ext}} \text{Pic}(\mathcal{O}_L[1/p]) \xrightarrow{ \text{Nm}^L_F } \text{Pic}(\mathcal{O}_F[1/p])^+ \to 1.
\]
We will prove exactness on both sides of the connecting map $\varphi$ first, before pasting the two together.
\begin{lemma}\label{exact12}
The following sequence is exact:
\[
1 \to \mathcal{O}_1^{\times} \mathcal{O}_2^{\times} \to \mathcal{O}_L[1/p]^{\times} \xrightarrow{\emph{Nm}^L_F} \mathcal{O}_F[1/p]^{*}.
\]
\end{lemma}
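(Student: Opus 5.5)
Three things need checking: that the norm map lands in $\mathcal{O}_F[1/p]^*$, that $\mathcal{O}_1^{\times}\mathcal{O}_2^{\times}\to\mathcal{O}_L[1/p]^{\times}$ is injective, and that the kernel of $\text{Nm}^L_F$ equals $\mathcal{O}_1^{\times}\mathcal{O}_2^{\times}$. Injectivity is immediate, since $\mathcal{O}_1^{\times}\mathcal{O}_2^{\times}$ is literally a subgroup of $\mathcal{O}_L^{\times}\subset\mathcal{O}_L[1/p]^{\times}$.

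\textbf{The image lies in $\mathcal{O}_F[1/p]^*$.} Take $u\in\mathcal{O}_L[1/p]^{\times}$. Its norm $\text{Nm}^L_F(u)=u\bar u$ is a product of complex conjugates under every complex embedding of the CM field $L$, so it is totally positive. For the parity condition we use the decomposition of $p$ recalled above: $p$ splits as $\mathfrak{p}\mathfrak{p}'$ in $F$, and both $\mathfrak{p}$ and $\mathfrak{p}'$ are inert in $L$. The $\mathfrak{P}$-adic valuation on $L$ therefore restricts to the $\mathfrak{p}$-adic valuation on $F$, and since $\mathfrak{P}$ is fixed by $\text{Gal}(L/F)$,
\[
\text{ord}_{\mathfrak{p}}(\text{Nm}^L_F u)=\text{ord}_{\mathfrak{P}}(u)+\text{ord}_{\mathfrak{P}}(\bar u)=2\,\text{ord}_{\mathfrak{P}}(u).
\]
This is even, and the same argument applies at $\mathfrak{p}'$. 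Hence $\text{Nm}^L_F(u)\in\mathcal{O}_F[1/p]^*$ by the valuation description of $\mathcal{O}_F[1/p]^*$ given above.

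\textbf{The kernel.} Let $u\in\mathcal{O}_L[1/p]^{\times}$ with $u\bar u=1$. Then $\text{ord}_{\mathfrak{P}}(u)=-\text{ord}_{\mathfrak{P}}(\bar u)$. But $\mathfrak{P}$ is Galois-stable, so $\text{ord}_{\mathfrak{P}}(\bar u)=\text{ord}_{\mathfrak{P}}(u)$, which forces $\text{ord}_{\mathfrak{P}}(u)=0$; likewise $\text{ord}_{\mathfrak{P}'}(u)=0$. Because $\mathfrak{P},\mathfrak{P}'$ are the only primes of $L$ above $p$, $u$ is a unit away from $p$ and at $p$, so $u\in\mathcal{O}_L^{\times}$. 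We are now reduced to the integral statement: the kernel of $\text{Nm}^L_F$ on $\mathcal{O}_L^{\times}$ is $\mathcal{O}_1^{\times}\mathcal{O}_2^{\times}$, which is the left-hand part of the exact sequence in Theorem~\ref{exactseqold}. Conversely, every element of $\mathcal{O}_1^{\times}\mathcal{O}_2^{\times}$ is a root of unity and hence has norm $1$.

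The step requiring the most care is the kernel, but it is routine once one observes that inertness of $\mathfrak{p}$ and $\mathfrak{p}'$ in $L$ makes each prime above $p$ Galois-stable. That stability is exactly what kills the $p$-part of norm-one elements and lets us quote the classical sequence of Theorem~\ref{exactseqold}.
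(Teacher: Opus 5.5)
Your proof is correct and follows essentially the same route as the paper: inertness of $\mathfrak{p},\mathfrak{p}'$ in $L/F$ gives $\text{ord}_{\mathfrak{p}}(\text{Nm}^L_F u)=2\,\text{ord}_{\mathfrak{P}}(u)$ (and likewise at $\mathfrak{p}'$), which both puts the image in $\mathcal{O}_F[1/p]^*$ and forces norm-one elements into $\mathcal{O}_L^{\times}$, where Theorem~\ref{exactseqold} applies. You also spell out the total positivity of $u\bar u$, which the paper leaves implicit.
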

\begin{proof}
Exactness at $\mathcal{O}_1^{\times}\mathcal{O}_2^{\times}$ is clear, so we reduce to checking exactness at $\mathcal{O}_L[1/p]^{\times}$. Clearly $\mathcal{O}_1^{\times} \mathcal{O}_2^{\times} \subset \text{ker}( \text{Nm}^L_F )$. For the other inclusion, let $u \in \mathcal{O}_L[1/p]^{\times}$. Because $\mathfrak{p}$ is inert in $L/F$, we find that $\text{ord}_{\mathfrak{p}}\left( \text{Nm}^L_F( u ) \right) = 2\text{ ord}_{\mathfrak{P}}( u )$, and similarly for $\mathfrak{p}'$ and $\mathfrak{P}'$. First, this shows that indeed $\text{Nm}^L_F( \mathcal{O}_L[1/p]^{\times} ) \subset \mathcal{O}_F[1/p]^*$. Second, we find that $\text{Nm}^L_F( u ) = 1$ implies that $u \in \mathcal{O}_L^{\times}$, and therefore $u \in \mathcal{O}_1^{\times} \mathcal{O}_2^{\times}$ by Theorem \ref{exactseqold}.
\end{proof}

\begin{proposition}\label{exact22}
The following sequence is exact:
\[
\emph{Pic}(K_1) \times \emph{Pic}(K_2) \xrightarrow{\emph{ext}} \emph{Pic}(\mathcal{O}_L[1/p]) \xrightarrow{\emph{Nm}^L_F} \emph{Pic}(\mathcal{O}_F[1/p])^+ \to 1.
\]
\end{proposition}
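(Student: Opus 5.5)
We derive this from Theorem \ref{exactseqold} by passing to the quotients by the classes of primes above $p$. Throughout we use the identifications
\[
\text{Pic}(\mathcal{O}_L[1/p]) = \text{Pic}(L)/\langle [\mathfrak{P}], [\mathfrak{P}'] \rangle, \qquad \text{Pic}(\mathcal{O}_F[1/p])^+ = \text{Pic}(F)^+/\langle [\mathfrak{p}], [\mathfrak{p}'] \rangle .
\]
Here the second group is the narrow class group of the ring $\mathcal{O}_F[1/p]$, namely fractional ideals modulo principal ideals with totally positive generators. All maps are compatible with these quotients: extension commutes with localisation, and $\text{Nm}^L_F(\mathfrak{P}) = \mathfrak{p}^2$ since $\mathfrak{p}$ is inert in $L$, so the norm sends $\langle[\mathfrak{P}],[\mathfrak{P}']\rangle$ into $\langle[\mathfrak{p}],[\mathfrak{p}']\rangle$.

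\emph{Surjectivity.} By Theorem \ref{exactseqold}, the image of $\text{Nm}^L_F$ in $\text{Pic}(F)^+$ is $\ker\chi$, which has index $2$. Since $\chi([\mathfrak{p}]) = -1$, the class $[\mathfrak{p}]$ lies outside $\ker\chi$, so $\ker\chi \cdot \langle[\mathfrak{p}]\rangle = \text{Pic}(F)^+$. Hence the composite $\text{Pic}(L) \to \text{Pic}(F)^+ \to \text{Pic}(\mathcal{O}_F[1/p])^+$ is surjective, and it factors through $\text{Pic}(\mathcal{O}_L[1/p])$.

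\emph{Exactness in the middle.} The composite $\text{Nm}\circ\text{ext}$ is trivial already in $\text{Pic}(F)^+$, so only the inclusion of the kernel in the image needs proof. Let $[\mathfrak{A}] \in \text{Pic}(L)$ have norm in $\langle[\mathfrak{p}],[\mathfrak{p}']\rangle$, say $\text{Nm}(\mathfrak{A}) \sim \mathfrak{p}^a\mathfrak{p}'^b$ in $\text{Pic}(F)^+$. Applying $\chi$ and using $\chi([\mathfrak{p}]) = \chi([\mathfrak{p}']) = -1$ gives $a+b$ even. Replacing $\mathfrak{A}$ by $\mathfrak{A}\,\mathfrak{P}^{-\lfloor a/2\rfloor}\mathfrak{P}'^{-\lfloor b/2\rfloor}$ does not change its class in $\text{Pic}(\mathcal{O}_L[1/p])$, and reduces us to $(a,b)\in\{(0,0),(1,1)\}$.

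In the case $(0,0)$, Theorem \ref{exactseqold} puts $[\mathfrak{A}]$ in the image of $\text{ext}$, and we are done.

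In the case $(1,1)$ we need $\mathfrak{p}\mathfrak{p}' = p\mathcal{O}_F$. This is principal with the totally positive generator $p$, so its class is trivial in $\text{Pic}(F)^+$, and the norm of $\mathfrak{A}$ is again trivial. This reduces the case $(1,1)$ to the case $(0,0)$ as well.

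The main obstacle is bookkeeping rather than substance. We must check that the localised narrow class group really is the quotient of $\text{Pic}(F)^+$ by $\langle[\mathfrak{p}],[\mathfrak{p}']\rangle$: units of $\mathcal{O}_F[1/p]$ may be used as generators, and positivity is preserved because $p>0$ and a suitable power of $\mathfrak{p}$ has a totally positive generator. The fact that $[\mathfrak{p}]$ has odd order in neither group causes no trouble, since the argument above uses only the parity obtained from $\chi$.
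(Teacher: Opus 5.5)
Your argument is correct and is essentially the paper's proof: surjectivity comes from $\chi([\mathfrak{p}])=-1$ merging the two cosets of $\ker\chi$ in the quotient, and exactness in the middle comes from $\chi\circ\mathrm{Nm}^L_F=1$ forcing an even exponent, which is then absorbed by a power of $\mathfrak{P}$ before applying Theorem \ref{exactseqold}. The only cosmetic difference is that the paper quotients by $\langle[\mathfrak{P}]\rangle$ and $\langle[\mathfrak{p}]\rangle$ alone, using from the outset that the primes above $p$ are mutually inverse in the class groups, whereas you keep both generators and dispose of the $(1,1)$ case via $\mathfrak{p}\mathfrak{p}'=p\mathcal{O}_F$, which is the same fact.
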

\begin{proof}
As the two primes in $F$ and $L$ above $p$ are each other's inverses in the class groups, it follows that
\[
\text{Pic}( \mathcal{O}_L[1/p] ) = \text{Pic}(L) / \langle [\mathfrak{P}] \rangle \quad \text{and} \quad \text{Pic}( \mathcal{O}_F[1/p] )^+ = \text{Pic}(F)^+ / \langle [\mathfrak{p}] \rangle.
\]
Since $\chi( [\mathfrak{p}] ) = -1$, the two cosets for the kernel of $\chi$ inside $\text{Pic}(F)^+$ are identified in the quotient $\text{Pic}(F)^+ / \langle [\mathfrak{p}] \rangle$. Surjectivity of the norm now follows from Theorem \ref{exactseqold}. For exactness at $\text{Pic}(\mathcal{O}_L[1/p])$, we first observe that $\text{Pic}(K_1) \times \text{Pic}(K_2)$ is still contained in the kernel, so we reduce to showing the other inclusion. 

Let $C \in \text{Pic}( L )$ be in the kernel of the norm map with image in $\text{Pic}(F)^+ / \langle [\mathfrak{p}] \rangle$. Then for some $s \in \mathbb{Z}$, we must have that $\text{Nm}^L_F(C) = [\mathfrak{p}^s] \in \text{Pic}(F)^+$. Note that $s$ must be even, since $\chi \circ \text{Nm}^L_F = 1$, whereas $\chi( [\mathfrak{p}^s] ) = (-1)^s$. Write $s = 2t$, and note that because also $\text{Nm}^L_F( \mathfrak{P}^t ) = \mathfrak{p}^s$, the class of $C - [\mathfrak{P}^t] \in \text{Pic}(L)$ is contained in the kernel of the norm map $\text{Pic}(L) \to \text{Pic}(F)^+$. Therefore by Theorem \ref{exactseqold},
\[
C = ( C - [\mathfrak{P}^t] ) + [\mathfrak{P}^t] \in \text{ext}\big( \text{Pic}(K_1) \times \text{Pic}(K_2) \big) + \langle [\mathfrak{P}] \rangle, 
\]
and hence $\text{Pic}(K_1) \times \text{Pic}(K_2)$ surjects onto the kernel in question.
\end{proof}

\begin{lemma}\label{pidords}
Recall that $2k \colonequals \emph{ord}( [ \mathfrak{p} ] ) \in \emph{Pic}(F)^+$. Then $\emph{ord}( [ \mathfrak{P} ] ) \in \emph{Pic}(L)$ is equal to either $k$ or $2k$.
\end{lemma}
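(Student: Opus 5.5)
The plan is to bound $\text{ord}([\mathfrak{P}])$ from above and below using the norm map and the extension map. For the lower bound, apply $\text{Nm}^L_F : \text{Pic}(L) \to \text{Pic}(F)^+$. Since $\mathfrak{p}$ is inert in $L/F$, we have $\text{Nm}^L_F(\mathfrak{P}) = \mathfrak{p}^2$, so $\text{Nm}^L_F([\mathfrak{P}]) = [\mathfrak{p}]^2$, which has order exactly $k$ in $\text{Pic}(F)^+$. The order of $[\mathfrak{P}]$ is therefore a multiple of $k$.

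For the upper bound, use the extension map from $F$ to $L$. Since $\mathfrak{p}$ is inert in $L$, we have $\mathfrak{p}\mathcal{O}_L = \mathfrak{P}$. The relation $\mathfrak{p}^{2k} = \beta \mathcal{O}_F$ then gives $\mathfrak{P}^{2k} = \beta \mathcal{O}_L$, which is principal. Hence $\text{ord}([\mathfrak{P}])$ divides $2k$.

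Putting the two bounds together, the order is a multiple of $k$ and a divisor of $2k$, so it equals $k$ or $2k$. This needs no input from Theorem \ref{exactseqold}. The only point requiring care is that $\text{Nm}^L_F$ is well defined as a map to the \emph{narrow} class group, i.e.\ that it sends principal ideals to ideals generated by totally positive elements. This holds because $L$ is a CM field over $F$, so $\text{Nm}^L_F(x) = x\bar{x} \gg 0$ for every $x \in L^{\times}$.

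I do not expect any real obstacle. The one step to double-check is that $\text{Nm}^L_F([\mathfrak{P}]) = [\mathfrak{p}^2]$ in the narrow class group, since the lower bound rests on it, and this follows directly from the definition of the ideal norm for an inert prime. If one wanted to decide which of $k$ and $2k$ actually occurs, one would ask whether $\beta$ can be adjusted by a totally positive unit to become a norm from $L$. That relates to the capitulation kernel $\text{Cap}$ and the cokernel of $\text{Nm}^L_F$ on units, but the lemma does not require it.
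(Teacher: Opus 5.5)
Your proposal is correct and is essentially the paper's proof: the norm relation $\text{Nm}^L_F(\mathfrak{P}^t) = \mathfrak{p}^{2t}$ forces $k \mid t$, and $\mathfrak{P}^{2k} = \mathfrak{p}^{2k}\mathcal{O}_L$ being principal forces $t \mid 2k$. You also note that the norm lands in the narrow class group because $L/F$ is CM, which is a correct detail the paper leaves implicit.
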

\begin{proof}
Let $t$ denote the order of $\mathfrak{P}$ in $\text{Pic}(L)$. Then $\text{Nm}^L_F( \mathfrak{P}^t ) = \mathfrak{p}^{2t}$ must be a principal ideal, and therefore $2k \mid 2t$, so $k \mid t$. Conversely, if $\mathfrak{p}^{2k}$ is principal, then so is $\mathfrak{P}^{2k} = \mathfrak{p}^{2k}\mathcal{O}_L$, and therefore $t \mid 2k$. These two division relations together imply that $t \in \{ k, 2k \}$, completing the proof.
\end{proof}
Let $\text{Cap}_p \colonequals \text{ker}\big( \text{ext} : \text{Pic}(K_1) \times \text{Pic}(K_2) \to \text{Pic}(\mathcal{O}_L[1/p] ) \big)$. We now define a total of four possible cases:

\renewcommand{\arraystretch}{1.25}
\begin{table}[h]
\begin{tabular}{|c||c|c|}
\hline
Case & $\epsilon_F \in \text{Nm}^L_F( \mathcal{O}_L^{\times} )$ & $\epsilon_F \notin \text{Nm}^L_F( \mathcal{O}_L^{\times} )$ \\ \hline \hline
$\text{ord}( [\mathfrak{P}] ) = k$ & Case \textbf{AA} & Case \textbf{BA} \\ \hline
$\text{ord}( [\mathfrak{P}] ) = 2k$ & Case \textbf{AB} & Case \textbf{BB} \\ \hline
\end{tabular}
\end{table}
\renewcommand{\arraystretch}{1}
\begin{proposition}\label{connectcard}
It holds that
\begin{align*}
\emph{coker}\left( \emph{Nm}^L_F : \mathcal{O}_L[1/p]^{\times} \to \mathcal{O}_F[1/p]^* \right) &\cong ( \mathbb{Z} / 2 \mathbb{Z} )^b \cong \emph{Cap}_p,
\end{align*}
where $b$ denotes the number of \textbf{B}'s in the name of the case.
\end{proposition}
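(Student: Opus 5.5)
The plan is to compute both groups separately, show each has order $2^b$, and then check that each is killed by $2$, so that both are isomorphic to $(\mathbb{Z}/2\mathbb{Z})^b$.

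\textbf{The cokernel.} Let $t = \text{ord}([\mathfrak{P}]) \in \{k, 2k\}$ (Lemma \ref{pidords}).

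First I describe $\mathcal{O}_L[1/p]^{\times}$. Since $p\mathcal{O}_L = \mathfrak{P}\mathfrak{P}'$, we have $[\mathfrak{P}'] = [\mathfrak{P}]^{-1}$. Any $p$-unit of $L$ has divisor $\mathfrak{P}^a\mathfrak{P}'^{c}$, which is principal exactly when $t \mid a - c$. Hence
\[
\mathcal{O}_L[1/p]^{\times} = \mathcal{O}_L^{\times} \cdot \langle p \rangle \cdot \langle \pi_t \rangle, \qquad \pi_t \mathcal{O}_L = \mathfrak{P}^t.
\]
It follows that the image of the norm in $\mathcal{O}_F[1/p]^* = \langle \epsilon_F\rangle \times \langle p^2\rangle \times \langle \beta \rangle$ is generated by three pieces:
\begin{itemize}
\item $\text{Nm}^L_F(\mathcal{O}_L^{\times})$, which is $\langle \epsilon_F\rangle$ in the \textbf{A}-column and $\langle \epsilon_F^2\rangle$ in the \textbf{B}-column;
\item $p^2$;
\item $\text{Nm}^L_F(\pi_t)$.
\end{itemize}

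I then split according to $t$. If $t = 2k$, I can take $\pi_t = \beta$, because $\beta\mathcal{O}_L = \mathfrak{p}^{2k}\mathcal{O}_L = \mathfrak{P}^{2k}$. This gives $\text{Nm}^L_F(\pi_t) = \beta^2$. If $t = k$, then $\text{Nm}^L_F(\pi_k)$ generates $\mathfrak{p}^{2k} = \beta\mathcal{O}_F$ and is totally positive, so it equals $\beta\epsilon_F^e$ for some $e \in \mathbb{Z}$.

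With these generators the index of the image in $\langle \epsilon_F\rangle\times\langle p^2\rangle\times\langle\beta\rangle \cong \mathbb{Z}^3$ is an elementary lattice computation:
\begin{itemize}
\item in case \textbf{AA} the index is $1$;
\item in cases \textbf{BA} and \textbf{AB} the index is $2$, with quotient $\mathbb{Z}/2$;
\item in case \textbf{BB} the image is $\langle \epsilon_F^2, p^2, \beta^2\rangle$, with quotient $(\mathbb{Z}/2)^2$.
\end{itemize}
So the cokernel is $(\mathbb{Z}/2\mathbb{Z})^b$.

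\textbf{The order of $\text{Cap}_p$.} By Proposition \ref{exact22}, the image of $\text{ext}$ is the kernel of the surjection $\text{Pic}(\mathcal{O}_L[1/p]) \to \text{Pic}(\mathcal{O}_F[1/p])^+$. The proof of that proposition gives the quotient descriptions
\[
\text{Pic}(\mathcal{O}_L[1/p]) = \text{Pic}(L)/\langle[\mathfrak{P}]\rangle, \qquad \text{Pic}(\mathcal{O}_F[1/p])^+ = \text{Pic}(F)^+/\langle[\mathfrak{p}]\rangle,
\]
of orders $h_L/t$ and $h_F^+/(2k)$ respectively. Therefore
\[
\#\text{Cap}_p = \frac{h_1h_2}{(h_L/t)/(h_F^+/2k)} = \frac{h_1h_2h_F^+}{2h_L}\cdot\frac{t}{k} = \#\text{Cap}\cdot\frac{t}{k}.
\]
From Section \ref{seqsec}, $\#\text{Cap}$ equals the size of the cokernel of $\text{Nm}^L_F$ on $\mathcal{O}_L^{\times}$. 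This is $2$ exactly in the \textbf{B}-column, and $t/k = 2$ exactly in the \textbf{B}-row. Hence $\#\text{Cap}_p = 2^b$.

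\textbf{$\text{Cap}_p$ is $2$-torsion.} This is the step I expect to need the most care, since in case \textbf{BB} we must exclude $\mathbb{Z}/4$. Take $(c_1, c_2) \in \text{Cap}_p$. Then $c_1c_2\mathcal{O}_L$ is equivalent to a power of $\mathfrak{P}$ in $\text{Pic}(L)$. Apply $\text{Nm}^L_{K_1}$ to this relation:
\begin{itemize}
\item $\text{Nm}^L_{K_1}(c_1\mathcal{O}_L) = c_1^2$;
\item $\text{Nm}^L_{K_1}(c_2\mathcal{O}_L) = \text{Nm}^{K_2}_{\mathbb{Q}}(c_2)\mathcal{O}_1$, which is principal;
\item $\text{Nm}^L_{K_1}(\mathfrak{P}) = p\mathcal{O}_1$, since $p$ is inert in $K_1$, which is also principal.
\end{itemize}
Hence $c_1^2 = 1$ in $\text{Pic}(K_1)$, and symmetrically $c_2^2 = 1$. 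So $\text{Cap}_p$ is an elementary abelian $2$-group of order $2^b$, hence isomorphic to $(\mathbb{Z}/2\mathbb{Z})^b$, which completes the proof.
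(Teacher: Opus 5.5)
Your proof is correct. For the first isomorphism and for the count $\#\text{Cap}_p = \#\text{Cap}\cdot t/k$, it follows the paper's proof essentially verbatim. The paper also writes the $p$-units of $L$ using a generator $\alpha$ of $\mathfrak{P}^t$, takes $\alpha=\beta$ when $t=2k$, and reads $\#\text{Cap}_p$ off Proposition \ref{exact22} exactly as you do. There are two minor differences in that part:
\begin{itemize}
\item When $t=k$, the paper renormalises $\beta:=\text{Nm}^L_F(\alpha)$, which is allowed since $\beta$ is only defined up to $\mathcal{O}_F^{\times,+}$. You instead keep $\beta\epsilon_F^e$ and do the index computation in $\mathbb{Z}^3$.
\item You read $\text{Nm}^L_F(\mathcal{O}_L^{\times})\in\{\langle\epsilon_F\rangle,\langle\epsilon_F^2\rangle\}$ directly off the definition of the columns, rather than decomposing $\mathcal{O}_L^{\times}$ as $\mathcal{O}_1^{\times}\mathcal{O}_2^{\times}\times\langle\epsilon_L\rangle$.
\end{itemize}

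The genuine difference is the step showing that $\text{Cap}_p$ is killed by $2$. The paper says this only needs checking in case \textbf{BB}, and infers it from the kernels being $2$-torsion in cases \textbf{AB} and \textbf{BA}. Taken literally, that does not exclude $\mathbb{Z}/4\mathbb{Z}$: a pair $(c_1,c_2)$ with $\text{ext}(c_1,c_2)=[\mathfrak{P}^k]$ a priori only has its square in $\text{Cap}$, not necessarily trivial.

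Your argument closes this cleanly. You apply $\text{Nm}^L_{K_1}\colon \text{Pic}(L)\to\text{Pic}(K_1)$ to $\text{ext}(c_1,c_2)=[\mathfrak{P}^s]$. The norms of $c_2\mathcal{O}_L$ and of $\mathfrak{P}$ are generated by rational numbers, namely $\text{Nm}(J_2)$ and $p$ (the latter because $p$ is inert in $K_1$). This gives $c_1^2=1$, and symmetrically $c_2^2=1$. The argument is uniform across all four cases and does not use the case-by-case description at all. It is the better way to finish this proof.
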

\begin{proof}
Note that
\[
\mathcal{O}_L[1/p]^{\times} = \mathcal{O}_1^{\times}\mathcal{O}_2^{\times} \times \langle \epsilon_L \rangle \times \langle p \rangle \times \langle \alpha \rangle,
\]
where $\alpha \mathcal{O}_L = \mathfrak{P}^t$ and where $t \in \{ k, 2k \}$ denotes the order of $[\mathfrak{P}]$ in $\text{Pic}(L)$. Then
\[
\text{Nm}^L_F( \mathcal{O}_L[1/p]^{\times} ) = \langle \text{Nm}^L_F(\epsilon_L) \rangle \times \langle p^2 \rangle \times \langle \text{Nm}^L_F( \alpha ) \rangle.
\]
It follows that
\[
\text{coker}\left( \text{Nm}^L_F : \mathcal{O}_L[1/p]^{\times} \to \mathcal{O}_F[1/p]^* \right) = ( \langle \epsilon_F \rangle \times \langle \beta \rangle ) / ( \langle \text{Nm}^L_F( \epsilon_L ) \rangle \times   \langle \text{Nm}^L_F( \alpha ) \rangle ),
\]
where $\beta \mathcal{O}_F = \mathfrak{p}^{2k}$. If $t = k$, we may first choose $\alpha$ and then set $\beta = \text{Nm}^L_F( \alpha )$. If $t = 2k$, we first choose $\beta$ and observe that we may then set $\alpha = \beta$, so that $\text{Nm}^L_F( \alpha ) = \beta^2$. The first isomorphism follows.

For the second, we first compute the cardinality of $\text{Cap}_p$. Recall from Theorem \ref{exactseqold} that
\[
\# \text{Cap} = \frac{h_1h_2h_F^+}{2h_L} = [ \mathcal{O}_F^{\times,+} : \text{Nm}^L_F( \mathcal{O}_L^{\times} ) ].
\]
By Proposition \ref{exact22}, the cardinality of $\text{Cap}_p$ is given by
\[
\frac{h_1h_2h_F^+ / 2k}{h_L / t} = \frac{ h_1h_2h_F^+ }{ 2h_L } \cdot \frac{t}{k} = [ \mathcal{O}_F^{\times,+} : \text{Nm}^L_F( \mathcal{O}_L^{\times} ) ] \cdot \frac{t}{k}.
\]
We find that $\text{Cap}_p$ is a finite group of the claimed cardinality. That it is a 2-group only needs justification in case \textbf{BB}, but since the classes in the kernel of ext are 2-torsion in both the cases \textbf{AB} and $\textbf{BA}$, this holds true in case \textbf{BB} as well and the proof is complete.
\end{proof}
\begin{theorem}\label{exactseq}
There is an exact sequence
\[
1 \to \mathcal{O}_1^{\times} \mathcal{O}_2^{\times} \to \mathcal{O}_L[1/p]^{\times} \xrightarrow{ \emph{Nm}^L_F } \mathcal{O}_F[1/p]^{*} \xrightarrow{ \varphi } \emph{Pic}(K_1) \times \emph{Pic}(K_2) \xrightarrow{\emph{ext}} \emph{Pic}(\mathcal{O}_L[1/p]) \xrightarrow{ \emph{Nm}^L_F } \emph{Pic}(\mathcal{O}_F[1/p])^+ \to 1.
\]
\end{theorem}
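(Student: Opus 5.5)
Lemma \ref{exact12} gives exactness on the left up to $\mathcal{O}_F[1/p]^*$, and Proposition \ref{exact22} gives exactness on the right from $\text{Pic}(K_1)\times\text{Pic}(K_2)$ onwards. It remains to construct the connecting map $\varphi : \mathcal{O}_F[1/p]^* \to \text{Pic}(K_1)\times\text{Pic}(K_2)$. Its kernel must be exactly $\text{Nm}^L_F(\mathcal{O}_L[1/p]^\times)$ and its image exactly $\text{Cap}_p$. By Proposition \ref{connectcard}, the cokernel $\mathcal{O}_F[1/p]^*/\text{Nm}^L_F(\mathcal{O}_L[1/p]^\times)$ and $\text{Cap}_p$ are both isomorphic to $(\mathbb{Z}/2\mathbb{Z})^b$. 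So it suffices to exhibit \emph{some} isomorphism between them and define $\varphi$ as the composite
\[
\mathcal{O}_F[1/p]^* \twoheadrightarrow \text{coker}\big(\text{Nm}^L_F\big) \xrightarrow{\sim} \text{Cap}_p \hookrightarrow \text{Pic}(K_1)\times\text{Pic}(K_2).
\]
Then exactness at $\mathcal{O}_F[1/p]^*$ holds because $\ker \varphi$ is the image of the norm. Exactness at $\text{Pic}(K_1)\times\text{Pic}(K_2)$ holds because $\text{im}\,\varphi = \text{Cap}_p = \ker(\text{ext})$ by definition.

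To make this less abstract, and compatible with the earlier $\varphi$ of Theorem \ref{exactseqold}, I would define $\varphi$ via a Hilbert 90 / genus theory recipe. Given $u \in \mathcal{O}_F[1/p]^*$, we have $\text{Nm}^F_{\mathbb{Q}}(u) = 1$ up to a power of $p^2$. Adjusting by the norm $p^2 = \text{Nm}^L_F(p)$, which lies in the kernel anyway, we may write $u = y/\sigma(y)$ with $y \in \mathcal{O}_F$.

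Since $\text{ord}_{\mathfrak{p}}(u)$ and $\text{ord}_{\mathfrak{p}'}(u)$ are even, I expect to be able to choose $y$ so that the ideal $y\mathcal{O}_F$ is a product of a divisor of $\sqrt{D}$-type ramified primes, an even power of $\mathfrak{p}$ or $\mathfrak{p}'$, and an integer. The ramified part then determines ideals $I_1 \subset \mathcal{O}_1$, $I_2 \subset \mathcal{O}_2$, exactly as in the definition of $\varphi(\epsilon_F)$. Two checks are needed. First, $I_1I_2\mathcal{O}_L$ becomes principal in $\mathcal{O}_L[1/p]$, using $\mathfrak{P}^{t}$ and the relation $y\mathcal{O}_L = I_1 I_2 \mathfrak{P}^{(\cdot)}\cdot(\text{rational})$. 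Second, the resulting class is independent of the choice of $y$, and it is trivial precisely when $u$ is a norm.

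In practice I would shortcut all this by checking values on the generators $\epsilon_F, p^2, \beta$. On $p^2$, $\varphi$ is trivial. On $\epsilon_F$, it is the old $\varphi$; this is trivial in case \textbf{A} and in case \textbf{B} gives the nontrivial class of $\text{Cap} \subset \text{Cap}_p$. On $\beta$, $\varphi$ is trivial when $t=k$, since then $\beta$ is a norm. When $t=2k$, $\beta$ must map to an element of $\text{Cap}_p \setminus \text{Cap}$, namely a pair of classes whose extension is the class of $\mathfrak{P}^k$ modulo $\mathfrak{P}$-powers.

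The main obstacle is this last verification in cases \textbf{AB} and \textbf{BB}. We must show that $\text{Cap}_p$ is generated by $\text{Cap}$ together with one extra element coming from $\beta$. We must also show that the induced map on the cokernel is injective, i.e.\ that $\beta$ and $\epsilon_F\beta$ are not sent to $\text{Cap}$ in case \textbf{BB}.

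I would handle this by the cardinality count already done in Proposition \ref{connectcard}. Since both groups are elementary abelian of order $2^b$, and the map on generators sends the $b$ non-norm generators to elements generating distinct cosets (one to $\text{Cap}$, one outside $\text{Cap}$), injectivity follows. If the explicit description proves awkward, I would simply fall back to the abstract choice of isomorphism in the first paragraph, which suffices for exactness.
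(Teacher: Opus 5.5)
Your proposal is correct and is essentially the paper's proof. The existence of $\varphi$ is read off from Proposition \ref{connectcard} (cokernel of the norm $\cong (\mathbb{Z}/2\mathbb{Z})^b \cong \text{Cap}_p$), and the paper makes it explicit on generators exactly as you do: $\epsilon_F \mapsto ([I_1],[I_2])$, and, when $t = 2k$, $\beta$ goes to a pair whose extension is $[\mathfrak{P}^k] \in \text{Pic}(L)$. Your claim that $\beta$ is a norm when $t=k$ relies on the normalisation $\beta = \text{Nm}^L_F(\alpha)$ made in the proof of Proposition \ref{connectcard}.

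The Hilbert 90 paragraph is not needed. As written it would also fail for $\beta$ when $k$ is odd: $\text{Nm}^F_{\mathbb{Q}}(p^2) = p^4$ cannot cancel $\text{Nm}^F_{\mathbb{Q}}(\beta) = p^{2k}$.
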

\begin{proof}
The existence of a map $\varphi : \mathcal{O}_F[1/p]^{*} \to \text{Pic}(K_1) \times \text{Pic}(K_2)$ that connects both sequences from Propositions \ref{exact12} and \ref{exact22} is immediate from Proposition \ref{connectcard}, but we will be more explicit. 

If $\epsilon_F \notin \text{Nm}( \mathcal{O}_L^{\times} )$, then we send $\epsilon_F$ to the pair of ideal classes $([I_1], [I_2])$ as in the original sequence. 

If $\beta \notin \text{Nm}( \mathcal{O}_L[1/p]^{\times} )$, which happens if and only if $t = 2k$, then we send $\beta$ to any two classes $c_1 \in \text{Pic}(K_1)$ and $c_2 \in \text{Pic}(K_2)$ such that $\text{ext}(c_1,c_2) = [\mathfrak{P}^k] \in \text{Pic}(L)$, which are well-defined up to the pair $(I_1, I_2)$.

The complex property is now clear by construction, and exactness at $\text{Pic}(K_1) \times \text{Pic}(K_2)$ is checked as before, as the class of $\mathfrak{P}^k$ is trivial precisely when it has to be, establishing exactness of the full sequence.
\end{proof}

\section{An $F$-quadratic form}\label{qFsec}

Recall that we fixed a maximal $\mathbb{Z}[1/p]$-order $\mathcal{R} \subset B$, where $B$ is a definite rational quaternion algebra unramified at $p$. The following crucial result enables all that follows.
\begin{proposition}\label{cl1invp}
Every left-$\mathcal{R}$-ideal is principal, and so is every right-$\mathcal{R}$-ideal. In addition, any two maximal $\mathbb{Z}[1/p]$-orders in $B$ are conjugate.
\end{proposition}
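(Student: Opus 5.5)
The plan is to prove this by strong approximation for the norm-one group $B^1$, using that $B$ is split at $p$ so that the set $S=\{\infty,p\}$ contains a place where $B^1$ is non-compact. Equivalently, we may work through the Bruhat--Tits tree at $p$ and the finiteness of the integral class set, which is the more elementary route and the one we will take.

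Start with principality. Let $I$ be a locally principal left $\mathcal{R}$-ideal; over the Dedekind ring $\mathbb{Z}[1/p]$ every left ideal of a maximal order is locally principal. Fix a maximal $\mathbb{Z}$-order $\mathcal{R}_0\subset\mathcal{R}$ with $\mathcal{R}=\mathcal{R}_0[1/p]$. Then $I=I_0[1/p]$ for the integral left $\mathcal{R}_0$-ideal $I_0=I\cap\mathcal{R}_0$ or a suitable scaling of it. The adelic description of left ideal classes reads
\[
\mathrm{Cls}(\mathcal{R}) = B^\times\backslash \widehat{B}^{(p),\times}/\widehat{\mathcal{R}}^{(p),\times},
\]
with the $p$-component removed. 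By the Eichler--Kneser strong approximation theorem this double coset space is identified via the reduced norm with
\[
\mathbb{Q}^{\times,+}\backslash \widehat{\mathbb{Q}}^{(p),\times}/\mathrm{nrd}(\widehat{\mathcal{R}}^{(p),\times}).
\]
Since $\widehat{\mathcal{R}}_\ell$ is maximal at each $\ell\neq p$, its reduced norm is all of $\mathbb{Z}_\ell^\times$. The quotient is therefore the class group of $\mathbb{Z}[1/p]$ with positivity imposed, namely $\mathbb{Q}^{\times,+}\backslash\widehat{\mathbb{Q}}^{(p),\times}/\widehat{\mathbb{Z}}^{(p),\times}$, which is trivial: every idele is a positive rational times a unit. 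The positivity is harmless because elements of $B^\times$ have positive norm, $B$ being definite. Hence every left ideal is principal, and the right-ideal case follows by applying the canonical involution.

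The main point to justify is that strong approximation applies. The reduced-norm-one group $B^1$ is simply connected and semisimple, and $B^1(\mathbb{Q}_p)\cong\mathrm{SL}_2(\mathbb{Q}_p)$ is non-compact because $p\nmid M$. So $B^1(\mathbb{Q})$ is dense in $B^1(\mathbb{A}^{(p)})$ outside $\{\infty,p\}$, and the standard argument shows that the fibres of the norm map on the double coset space are single points.

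To avoid quoting adelic machinery, one can argue concretely instead. Given $I_0$, consider all the integral ideals $I_0'$ with $I_0'[1/p]=I$; these correspond to vertices of the Bruhat--Tits tree of $\mathrm{PGL}_2(\mathbb{Q}_p)$. Connectivity of this tree, together with the fact that $p$-neighbours realise every class in the genus, would give the same conclusion. We take the adelic argument as primary.

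For conjugacy, let $\mathcal{R}'$ be another maximal $\mathbb{Z}[1/p]$-order. The standard trick is to form the connecting ideal $J=\mathcal{R}\mathcal{R}'$, which is a left $\mathcal{R}$-ideal with right order $\mathcal{R}'$ because both orders are maximal. By the first part $J=\mathcal{R}x$ for some $x\in B^\times$, and then $\mathcal{R}'=O_r(J)=x^{-1}\mathcal{R}x$. The expected main obstacle is showing that $J$ is locally principal with right order exactly $\mathcal{R}'$. This holds locally at each $\ell\ne p$, since maximal orders over a local ring are conjugate, and that local statement suffices.
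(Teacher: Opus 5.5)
Your proposal is correct and follows the same route as the paper. The paper simply cites Theorem 28.5.5 and Corollary 28.5.6 of \cite{voight}: Eichler's theorem via strong approximation for $B^1$ applies because $B$ is split at $p$, and it identifies the class set through the reduced norm with a narrow class group of $\mathbb{Z}[1/p]$, which is trivial. You have unpacked that citation, and you deduce conjugacy of maximal orders from principality of the connecting ideal $\mathcal{R}\mathcal{R}'$.
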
 
\begin{proof}
Since $B$ is split at $p$ by assumption, and $\text{Pic}( \mathbb{Z}[1/p] ) = 0$, this follows directly from Theorem 28.5.5 and Corollary 28.5.6 in \cite{voight}.
\end{proof}
Suppose that there exist embeddings $\alpha_1 : K_1 \xhookrightarrow{} B$ and $\alpha_2 : K_2 \xhookrightarrow{} B$. Then the images $\alpha_1( \mathcal{O}_1[1/p] )$ and $\alpha_2( \mathcal{O}_2[1/p] )$ are contained in some maximal $\mathbb{Z}[1/p]$-order of $B$. By Proposition \ref{cl1invp} above, we may conjugate these embeddings by an element in $B^{\times}$ to ensure that
\[
\alpha_1( \mathcal{O}_1[1/p] ) \subset \mathcal{R} \quad \text{and} \quad \alpha_2( \mathcal{O}_2[1/p] ) \subset \mathcal{R}.
\]
Henceforth, we will assume that this is true and fix these embeddings once and for all. 

This choice of embeddings equips $B$ with the structure of an $L$-vector space as follows. Let $x \in K_1$ and $y \in K_2$. Then the action of the element $xy \in L = K_1K_2$ on some $b \in B$ is defined by $(xy) * b \colonequals \alpha_2( y ) b \alpha_1( x )$. We may extend this definition $\mathbb{Q}$-linearly to make sense of $z * b$ for all $z \in L$. 

Note that this way, as $\mathcal{O}_L = \mathcal{O}_1 \otimes_{\mathbb{Z}} \mathcal{O}_2$, the order $\mathcal{R} \subset B$ becomes an $\mathcal{O}_L[1/p]$-module.

\begin{lemma}\label{qFexist}
There exists a unique $F$-quadratic form $q_F : B \to F^+ \cup \{ 0 \}$ with the property that
\[
\emph{tr}( q_F( b ) ) = \emph{Nm}( b ) \quad \text{for all $b \in B$.}
\]
\end{lemma}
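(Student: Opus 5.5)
The plan is to build $q_F$ directly from the $L$-vector space structure on $B$ just introduced, using only the $F$-part of that structure. Set $\delta \colonequals \sqrt{D_1}\sqrt{D_2} \in F$, so $F = \mathbb{Q}(\delta)$. Let $T : B \to B$ be the $\mathbb{Q}$-linear operator
\[
T(b) \colonequals \delta * b = \alpha_2(\sqrt{D_2})\, b\, \alpha_1(\sqrt{D_1}).
\]
Since $\alpha_i(\sqrt{D_i})^2 = D_i$, we get $T^2 = D$. We read ``$F$-quadratic form'' with respect to the $F$-structure $f * b$.

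Write $\langle b, b' \rangle = \tfrac12 \mathrm{tr}(b \bar b')$ for the positive definite bilinear form attached to the reduced norm on the definite algebra $B$. The first step is to show that $T$ is self-adjoint for $\langle\cdot,\cdot\rangle$. Multiplicativity of the norm gives
\[
\mathrm{Nm}(Tb) = \mathrm{Nm}(\alpha_2(\sqrt{D_2}))\,\mathrm{Nm}(b)\,\mathrm{Nm}(\alpha_1(\sqrt{D_1})) = D_2 \cdot \mathrm{Nm}(b) \cdot D_1 = D\,\mathrm{Nm}(b).
\]
Polarising, $\langle Tb, Tb'\rangle = D\langle b,b'\rangle$, so $T^*T = D$. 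Combined with $T^2 = D$ this forces $T^* = T$.

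Next I define
\[
q_F(b) \colonequals \frac12\left( \mathrm{Nm}(b) + \frac{\langle b, Tb\rangle}{\delta} \right) \in F.
\]
The trace property is immediate, since $\langle b, Tb\rangle \in \mathbb{Q}$ and $\mathrm{tr}(1/\delta) = 0$. For $F$-quadraticity it suffices to check homogeneity under $\delta$ together with $\mathbb{Q}$-quadraticity; the associated bilinear form is then $F$-bilinear by the same computation. Using $\mathrm{Nm}(Tb) = D\,\mathrm{Nm}(b)$ and $\langle Tb, T^2b\rangle = D\langle b, Tb\rangle$, we obtain $q_F(Tb) = D\, q_F(b) = \delta^2 q_F(b)$.

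For positivity, extend scalars to $\mathbb{R}$ under either real embedding of $F$. Since $T$ is self-adjoint, $B_{\mathbb{R}}$ splits orthogonally into the $\pm\sqrt{D}$ eigenspaces $V_\pm$ of $T$. Writing $b = b_+ + b_-$, the two real embeddings of $q_F(b)$ are $\mathrm{Nm}(b_+)$ and $\mathrm{Nm}(b_-)$. Both are non-negative, and both vanish only if $b = 0$. Hence $q_F$ takes values in $F^+ \cup \{0\}$.

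For uniqueness, let $r$ be the difference of two such forms. It is an $F$-quadratic form with $\mathrm{tr}(r(b)) = 0$, so $r$ takes values in $\delta\mathbb{Q}$. Its $F$-bilinear polarisation $r(\cdot,\cdot)$ then also takes values in $\delta\mathbb{Q}$, since it is built from $\mathbb{Q}$-combinations of values of $r$. On the other hand $r(\delta * b, b') = \delta\, r(b,b')$ lies in $\delta^2\mathbb{Q} = \mathbb{Q}$. A value lying in both $\mathbb{Q}$ and $\delta\mathbb{Q}$ must be $0$, so $r \equiv 0$.

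The main obstacle is the self-adjointness of $T$, which is what makes the formula for $q_F$ simultaneously $F$-quadratic and positive. It rests on the norm identity $\mathrm{Nm}(Tb) = D\,\mathrm{Nm}(b)$ together with $T^2 = D$.
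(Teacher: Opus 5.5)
Your proof is correct. It also takes a different route from the paper, which gives no argument of its own here. The paper cites Proposition 2.3 of Howard--Yang (or Damm-Johnsen's Proposition 3.2, or the thesis) for existence and uniqueness, and handles positivity in one line as a consequence of the definiteness of $\mathrm{Nm}$.

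Conceptually, your formula is the trace-duality lift written out in the basis $1,\delta$ of $F$. For a symmetric $\mathbb{Q}$-bilinear form on which $F$ acts self-adjointly (here $\langle\cdot,\cdot\rangle$, by your $T^*=T$), there is a unique $F$-bilinear form $\Phi$ with $\mathrm{tr}_{F/\mathbb{Q}}\circ\Phi=\langle\cdot,\cdot\rangle$. With the trace-dual basis $\tfrac12,\tfrac{1}{2\delta}$ it is $\Phi(b,b')=\tfrac12\langle b,b'\rangle+\tfrac{1}{2\delta}\langle Tb,b'\rangle$, and your $q_F(b)$ is exactly $\Phi(b,b)$. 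Your uniqueness argument is the injectivity half of that duality.

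The citation buys brevity and direct compatibility with the Howard--Yang local lemmas the paper invokes later. Your argument buys three things:
\begin{enumerate}
\item It is self-contained and gives an explicit formula.
\item It proves positivity: the two real embeddings of $q_F(b)$ are $\mathrm{Nm}(b_+)$ and $\mathrm{Nm}(b_-)$, which also shows that $q_F$ is anisotropic. More cheaply, Cauchy--Schwarz gives $|\langle b,Tb\rangle|\le\sqrt{D}\,\mathrm{Nm}(b)$.
\item It uses only $T^2=D$ and $\mathrm{Nm}(Tb)=D\,\mathrm{Nm}(b)$, so it transfers verbatim to Howard--Yang's $\deg$ on $\mathrm{Hom}(\mathbf{E}_1,\mathbf{E}_2)\otimes\mathbb{Q}$, by multiplicativity of degrees.
\end{enumerate}

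Two small corrections. First, $\alpha_i(\sqrt{D_i})$ is a pure quaternion, so its reduced norm is $-D_i>0$, not $D_i$. The product is still $D$, so nothing downstream changes.

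Second, homogeneity $q_F(\delta*b)=D\,q_F(b)$ together with $\mathbb{Q}$-quadraticity does not by itself imply $F$-quadraticity. On $V=F$, the form $x+y\delta\mapsto xy$ satisfies both but is not $F$-quadratic. The check that actually carries the weight is $F$-linearity of the polarisation $\langle b,b'\rangle+\delta^{-1}\langle b,Tb'\rangle$ in one variable. That check is exactly where $T^*=T$ and $T^*T=D$ are used. You do assert it ``by the same computation'', and it holds, so this is an imprecision in the phrasing rather than a gap.
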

\begin{proof}
This is Proposition 2.3 in \cite{HY}, rephrased in the language of quaternions. Alternatively, see Proposition 3.2 in \cite{damm}, or Proposition 4.4.3 in the author's PhD thesis \cite{thesis}. That the values of $q_F$ are totally positive is easily deduced from the fact that the norm form on $B$ is positive definite.
\end{proof}

For $i \in \{ 1,2 \}$, let $\text{Emb}( \mathcal{O}_i, \mathcal{R}, \Gamma )$ denote the set of $\Gamma$-conjugacy classes of embeddings $\mathcal{O}_i \xhookrightarrow{} \mathcal{R}$. Note that it makes no difference whether we consider embeddings of $\mathcal{O}_i$ or $\mathcal{O}_i[1/p]$, so to simplify notation, we will work with the former. The sets $\text{Emb}( \mathcal{O}_i, \mathcal{R}, \Gamma )$ are in bijection with the sets of CM points of discriminants $D_1$ and $D_2$ respectively on the curve $\Gamma \setminus \mathcal{H}_p$, and they carry an action of the class groups $\text{Pic}(K_i)$ as follows. 

Let $0 \neq J_1 \subset \mathcal{O}_1$ be an integral ideal. Then the left-$\mathcal{R}$-ideal $\mathcal{R} \alpha_1( J_ 1 )$ is principal, so there exists $\xi_1 \in \mathcal{R}$ such that $\mathcal{R} \alpha_1( J_ 1 ) = \mathcal{R} \xi_1$. The ambiguity in $\xi_1$ is measured by $\mathcal{R}^{\times}$. After possibly multiplying on the left by any $\varpi \in \mathcal{R}^{\times}$ for which $\text{ord}_p( \text{Nm}( \varpi ) )$ is odd, we may assume that $\text{ord}_p( \text{Nm}( \xi_1 ) )$ is even. We then set
\[
J_1 \cdot \alpha_1(-) \colonequals \xi_1 \alpha_1(-) \xi_1^{-1} : \mathcal{O}_1 \xhookrightarrow{} \mathcal{R}.
\]
Similarly, for an ideal $0 \neq J_2 \subset \mathcal{O}_2$, the set $\alpha_2( J_2 ) \mathcal{R}$ is a fractional right-ideal of $R$. Again, this ideal must be principal and therefore there exists an element $\xi_2 \in \mathcal{R}$ such that $\alpha_2( J_2 ) \mathcal{R} = \xi_2 \mathcal{R}$ and such that $\text{ord}_p( \text{Nm}( \xi_2 ) )$ is even. We then set
\[
J_2 \cdot \alpha_2(-) \colonequals \xi_2^{-1} \alpha_2(-) \xi_2 : \mathcal{O}_2 \xhookrightarrow{} \mathcal{R}.
\]
It is not obvious from this definition that these recipes actually define group actions of the class groups of the fields $K_i$ on the sets $\text{Emb}( \mathcal{O}_i, \mathcal{R}, \Gamma )$ for $i \in \{ 1,2 \}$. The following proposition aims to remedy this; for a more general description of this action in the integral context, we refer the reader to Section 3 of \cite{grossheights}.

\begin{proposition}\label{picpf}
Let $0 \neq J \subset \mathcal{O}_1$ be an ideal. Then the construction above defines a group action of $\emph{Pic}(K_1)$ on the set $\emph{Emb}( \mathcal{O}_1, \mathcal{R}, \Gamma )$, and similarly for the action of $\emph{Pic}(K_2)$ on the set $\emph{Emb}( \mathcal{O}_2, \mathcal{R}, \Gamma )$.
\end{proposition}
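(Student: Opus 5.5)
The plan is to check directly that the recipe $J_1 \mapsto (\alpha_1 \mapsto \xi_1 \alpha_1 \xi_1^{-1})$ yields a well-defined map
\[
\{\text{nonzero ideals of } \mathcal{O}_1\} \times \text{Emb}( \mathcal{O}_1, \mathcal{R}, \Gamma ) \to \text{Emb}( \mathcal{O}_1, \mathcal{R}, \Gamma ),
\]
that this map is multiplicative in $J_1$, and that principal ideals act trivially. Together these give an action of $\text{Pic}(K_1)$. The case of $K_2$ is the mirror image with right ideals and conjugation by $\xi_2^{-1}$, so I would only indicate the changes.

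\textbf{Step 1: the conjugated map lands in $\mathcal{R}$.} I first check that $\xi_1 \alpha_1 \xi_1^{-1}$ is an embedding into $\mathcal{R}$. The left ideal $\mathcal{R}\alpha_1(J_1)$ is stable under right multiplication by $\alpha_1(\mathcal{O}_1)$, so its right order contains $\alpha_1(\mathcal{O}_1)$. On the other hand, the right order of $\mathcal{R}\xi_1$ is $\xi_1^{-1}\mathcal{R}\xi_1$. Hence $\alpha_1(\mathcal{O}_1) \subset \xi_1^{-1}\mathcal{R}\xi_1$, which is exactly the statement $\xi_1\alpha_1(\mathcal{O}_1)\xi_1^{-1} \subset \mathcal{R}$.

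\textbf{Step 2: independence of choices.} There are three choices to control.
\begin{enumerate}
\item[(a)] \emph{The generator $\xi_1$.} Any other admissible generator is $u\xi_1$ with $u \in \mathcal{R}^{\times}$ and $\text{ord}_p(\text{Nm}(u))$ even, since both generators have even $p$-valuation of the norm. Then $\text{Nm}(u) = p^{2m}$, so $p^{-m}u \in \Gamma$. Because $p$ is central, conjugation by $u$ and by $p^{-m}u$ agree, so the $\Gamma$-class of the new embedding is unchanged.
\item[(b)] \emph{The representative $\gamma\alpha_1\gamma^{-1}$ of the $\Gamma$-class,} with $\gamma \in \Gamma$. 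Using $\mathcal{R}\gamma = \mathcal{R}$ we get
\[
\mathcal{R}\gamma\alpha_1(J_1)\gamma^{-1} = \mathcal{R}\xi_1\gamma^{-1},
\]
so we may take $\xi_1\gamma^{-1}$ as generator, and its norm has the same $p$-valuation. Conjugating $\gamma\alpha_1\gamma^{-1}$ by $\xi_1\gamma^{-1}$ returns $\xi_1\alpha_1\xi_1^{-1}$.
\item[(c)] \emph{The ideal within its class.} It suffices to show that a principal ideal $J_1 = x\mathcal{O}_1$ acts trivially. Then $\mathcal{R}\alpha_1(J_1) = \mathcal{R}\alpha_1(x)$, and $\text{Nm}(\alpha_1(x)) = \text{Nm}^{K_1}_{\mathbb{Q}}(x)$ has even $p$-valuation because $p$ is inert in $K_1$. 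So $\xi_1 = \alpha_1(x)$ is admissible, and conjugation by $\alpha_1(x)$ fixes $\alpha_1$ since $K_1$ is commutative.
\end{enumerate}
Combined with multiplicativity below, (c) shows that $xJ_1$ and $J_1$ act identically.

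\textbf{Step 3: multiplicativity.} Let $\xi_1$ be an admissible generator for $J_1$ and $\alpha_1$, and set $\alpha_1' = \xi_1\alpha_1\xi_1^{-1}$. Then
\[
\mathcal{R}\alpha_1(J_1J_1') = \mathcal{R}\xi_1\alpha_1(J_1') = \mathcal{R}\alpha_1'(J_1')\xi_1 = \mathcal{R}\xi_1'\xi_1,
\]
where $\xi_1'$ is an admissible generator for $J_1'$ acting on $\alpha_1'$. The element $\xi_1'\xi_1$ has even $p$-valuation of the norm, since valuations add, so it is an admissible generator for $J_1J_1'$. Conjugation by $\xi_1'\xi_1$ is the composite of the two conjugations, which gives $J_1J_1' \cdot \alpha_1 = J_1' \cdot (J_1 \cdot \alpha_1)$. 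The unit ideal acts as the identity, so we obtain an action of the abelian group $\text{Pic}(K_1)$.

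For $K_2$, the same three steps go through with $\alpha_2(J_2)\mathcal{R} = \xi_2\mathcal{R}$, with left and right orders interchanged and $\xi_2^{-1}$ in place of $\xi_1$; principality of right ideals comes from Proposition~\ref{cl1invp}.

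I expect the main point of care to be the parity bookkeeping in Step~2(a). The ambiguity in $\xi_1$ is all of $\mathcal{R}^{\times}$, which is larger than $\Gamma$, and conjugation by an element of odd norm valuation would move the class. The normalisation $\text{ord}_p(\text{Nm}(\xi_1))$ even, together with the fact that $p$ is central, is exactly what cuts the ambiguity down to $\Gamma p^{\mathbb{Z}}$, and this acts trivially on $\Gamma$-classes.
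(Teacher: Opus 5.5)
Your proposal is correct and follows essentially the same route as the paper. The shared steps are: landing in $\mathcal{R}$ via the right order of $\mathcal{R}\xi_1$; independence of $\xi_1$ because admissible generators differ by elements of $\Gamma p^{\mathbb{Z}}$; triviality of principal ideals via $\xi_1 = \alpha_1(x)$, using that $p$ is inert; and multiplicativity via $\mathcal{R}\alpha_1(J_1J_1') = \mathcal{R}\xi_1'\xi_1$. Your Step~2(b), which checks independence of the chosen representative of the $\Gamma$-conjugacy class of $\alpha_1$, is a worthwhile addition that the paper's proof leaves implicit.
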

\begin{proof}
We will prove this for the action of $\text{Pic}(K_1)$, as the proof for the action of $\text{Pic}(K_2)$ is similar. To improve notation and avoid overloading, we mostly avoid using the subscript 1 in the proof below.

We first show that the $\Gamma$-conjugacy class of the embedding $J \cdot \alpha$ is independent of the choice of $\xi$. Indeed, suppose that $\xi' \in  \mathcal{R}$ is also such that $\mathcal{R} \xi' = \mathcal{R} \alpha( J ) = \mathcal{R} \xi$ and that $\text{ord}_p( \text{Nm}( \xi' ) )$ is even. Then $\mathcal{R} \xi' \xi^{-1} = \mathcal{R}$ and therefore $\xi' \xi^{-1} \in \mathcal{R}$. Similarly, we show that $\xi \xi'^{-1} \in \mathcal{R}$. Therefore $\xi' \xi^{-1} \in \mathcal{R}^{\times}$ and as $\text{ord}_p( \text{Nm}( \xi' \xi^{-1} ) )$ is even, up to some factors of $p$, we have $\xi' \xi^{-1} \in \Gamma$ and the claim is proved.

We continue by showing that the embedding $J \cdot \alpha$ takes values in $\mathcal{R}$. This is because $\mathcal{R} \alpha( J )$ is a right-$\mathcal{O}_1$-module, and so $\mathcal{R} \xi \alpha( \mathcal{O}_1 ) = \mathcal{R} \xi$. Therefore $\mathcal{R} \xi \alpha( \mathcal{O}_1 ) \xi^{-1} = \mathcal{R}$ and so $\xi \alpha( \mathcal{O}_1 ) \xi^{-1} \subset \mathcal{R}$. 

Next, if $x \in K_1^{\times}$, then $(Jx) \cdot \alpha = J \cdot \alpha$ up to $\Gamma$-conjugacy. Indeed, the action of $Jx$ is described by conjugation by $\xi \alpha( x )$, because $\text{ord}_p( \text{Nm}( \alpha(x) ) ) = \text{ord}_p( \text{Nm}(x) )$ must be even, as $p$ is assumed inert in $\mathcal{O}_1$. Now, as $\alpha(x)$ commutes with the image of $\alpha$, its conjugation action is the same as conjugation by $\xi$.

It follows that $J \cdot \alpha$ only depends on the class $[J] \in \text{Pic}(K_1)$. It thus remains to show that it is in fact a group action. To this end, let $J' \subset \mathcal{O}_1$ be another ideal. To compute the action of $J'$ on $J \cdot \alpha$, we first write $\mathcal{R} \xi \alpha( J' ) \xi^{-1} = \mathcal{R} \xi'$ for some $\xi' \in \mathcal{R}$, so we find that the embedding $J' \cdot (J \cdot \alpha)$ is given by $\xi' \xi \alpha( - ) \xi^{-1} \xi'^{-1}$. Since $\mathcal{R} \xi = \mathcal{R} \alpha(J)$ by definition, we find that $\mathcal{R} \alpha( J ) \alpha( J' ) \xi^{-1} = \mathcal{R} \xi'$, and therefore $\mathcal{R} \alpha( J \cdot J' ) = \mathcal{R} \xi' \xi$. In other words, the action of $J \cdot J'$ can also be described by conjugation by $\xi' \xi$, and the proof is complete.
\end{proof}

\begin{remark}\label{nmopm}
By considering the indices inside $\mathcal{R}$ on both sides of $\mathcal{R} \alpha_1( J ) = \mathcal{R} \xi_1$ for some ideal $J \subset \mathcal{O}_1$ and $\xi_1 \in \mathcal{R}$, one sees that away from $p$, the quaternion norm of $\xi_1$ is equal to the ideal norm of $J$. As $p$ is inert in $\mathcal{O}_1$, the number of factors of $p$ in the norm of $J$ is even. Adjusting $\xi_1$ by some power of $p$, we may thus always choose $\xi_1 \in \mathcal{R}$ such that $\text{Nm}( \xi_1 ) = \text{Nm}( J )$. The same holds for the action of ideals of $\mathcal{O}_2$.
\end{remark}

It will be important to understand the effect on the quadratic form $q_F$ after conjugating our embeddings.

\begin{proposition}\label{picdetf}
Let $\alpha_1 : K_1 \xhookrightarrow{} B$ and $\alpha_2 : K_2 \xhookrightarrow{} B$ be embeddings and write $q_F : B \to F^+ \cup \{ 0 \}$ for the $F$-quadratic form they induce. Further, let $\xi_1, \xi_2 \in B^{\times}$ be arbitrary and let $\tilde{q}_F : B \to F^+ \cup \{ 0 \}$ be the $F$-quadratic form induced by $\xi_1 \alpha_1( - ) \xi_1^{-1}$ and $\xi_2^{-1} \alpha_2( - ) \xi_2$. Then
\[
\tilde{q}_F(b) = \frac{q_F( \xi_2 b \xi_1 )}{\emph{Nm}(\xi_2 \xi_1)} \quad \text{for all $b \in B$}.
\]
\end{proposition}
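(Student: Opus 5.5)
The plan is to use the uniqueness part of Lemma \ref{qFexist}. We exhibit the right-hand side as an $F$-quadratic form for the $L$-structure coming from the conjugated embeddings, whose trace is the norm. Uniqueness then forces it to equal $\tilde{q}_F$.

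Write $\tilde{*}$ for the $L$-action attached to $\tilde\alpha_1 = \xi_1\alpha_1(-)\xi_1^{-1}$ and $\tilde\alpha_2 = \xi_2^{-1}\alpha_2(-)\xi_2$. Define the $\mathbb{Q}$-linear bijection $\phi : B \to B$ by $\phi(b) = \xi_2 b \xi_1$. For $x \in K_1$ and $y \in K_2$ we compute
\[
\phi\big((xy)\,\tilde{*}\, b\big) = \xi_2\,\xi_2^{-1}\alpha_2(y)\xi_2\, b\, \xi_1\alpha_1(x)\xi_1^{-1}\,\xi_1 = \alpha_2(y)\,\phi(b)\,\alpha_1(x) = (xy) * \phi(b).
\]
By $\mathbb{Q}$-linearity, $\phi$ is therefore an isomorphism of $L$-vector spaces from $(B,\tilde{*})$ to $(B,*)$, and in particular it is $F$-linear.

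Set $Q(b) = q_F(\phi(b))/\mathrm{Nm}(\xi_2\xi_1)$. Since $q_F$ is $F$-quadratic and $\phi$ is $F$-linear, and since $\mathrm{Nm}(\xi_2\xi_1) \in \mathbb{Q}^{+}$ is a positive rational scalar, $Q$ is an $F$-quadratic form on $(B,\tilde{*})$ with values in $F^+\cup\{0\}$. Its trace is
\[
\mathrm{tr}(Q(b)) = \frac{\mathrm{Nm}(\xi_2 b\xi_1)}{\mathrm{Nm}(\xi_2)\mathrm{Nm}(\xi_1)} = \mathrm{Nm}(b),
\]
using multiplicativity of the reduced norm. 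By the uniqueness statement in Lemma \ref{qFexist}, applied to the structure induced by $\tilde\alpha_1$ and $\tilde\alpha_2$, we conclude $Q = \tilde{q}_F$.

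The main point to be careful about is what "$F$-quadratic form" means in Lemma \ref{qFexist}: it means quadratic with respect to the $F$-structure given by $*$, so that $q_F(f*b) = f^2 q_F(b)$ and the associated bilinear form is $F$-bilinear. The check is that $\phi$ intertwines $\tilde{*}$ with $*$, which is what transports this property. It is also worth noting that the uniqueness in Lemma \ref{qFexist} is uniqueness among $F$-quadratic forms for a fixed $F$-structure. That is exactly the setting here, because $Q$ and $\tilde{q}_F$ are both quadratic for $\tilde{*}$.
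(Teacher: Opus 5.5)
Your proof is correct and is essentially the paper's argument: check that the candidate has trace $\mathrm{Nm}(b)$ and is $F$-quadratic for the conjugated structure, then conclude by the uniqueness in Lemma \ref{qFexist}. The only difference is that you prove full $L$-linearity of $b \mapsto \xi_2 b \xi_1$ (the content of the paper's later Lemma \ref{piciota}), which transports the whole $F$-quadratic structure at once, whereas the paper writes out only the scaling by $\sqrt{D}$ and calls general elements of $F$ an easy consequence.
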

\begin{proof}
It suffices to show that the claimed formula satisfies the two defining properties of $\tilde{q}_F$. First,
\[
\text{tr} \left( \frac{q_F( \xi_2 b \xi_1 )}{\text{Nm}(\xi_2\xi_1)} \right) = \frac{\text{Nm}( \xi_2 b \xi_1 )}{\text{Nm}(\xi_2\xi_1)} = \text{Nm}(b)
\]
for all $b \in B$. To show it is $F$-quadratic, we denote the action of $L$ on $B$ through $\alpha_1$ and $\alpha_2$ by $*$, whereas the action through $\xi_1 \alpha_1( - ) \xi_1^{-1}$ and $\xi_2^{-1} \alpha_2( - ) \xi_2$ is denoted by $\star$. Now for $\sqrt{D} = \sqrt{D_1} \cdot \sqrt{D_2} \in F$,
\begin{align*}
\tilde{q}_F(\sqrt{D} \star b) &= \tilde{q}_F(\xi_2^{-1} \alpha_2( \sqrt{D_2} ) \xi_2 b \xi_1 \alpha_1( \sqrt{D_1} ) \xi_1^{-1} ) = \frac{q_F( \alpha_2( \sqrt{D_2} ) \xi_2 b \xi_1 \alpha_1( \sqrt{D_1} ) )}{\text{Nm}(\xi_2\xi_1)} \\
&= \frac{q_F( \sqrt{D} * ( \xi_2 b \xi_1 ) )}{\text{Nm}(\xi_2\xi_1)} = D \frac{q_F( \xi_2 b \xi_1 )}{\text{Nm}(\xi_2\xi_1)} = D \tilde{q}_F(b).
\end{align*}
The statement for general elements of $F$ is now an easy consequence.
\end{proof}

For any pair of ideal classes $c_1 \in \text{Pic}(K_1)$ and $c_2 \in \text{Pic}(K_2)$, we let $q_F[c_1,c_2] : B \to F^+ \cup \{ 0 \}$ denote the $F$-quadratic form associated with any pair of choices of representatives for the $\Gamma$-conjugacy classes of the embeddings $c_1 \cdot \alpha_1$ and $c_2 \cdot \alpha_2$. This leaves quite some ambiguity in the definition of the form $q_F[c_1,c_2]$, but as we will mostly be concerning ourselves with the cardinalities of sets of the form
\[
\# \{ b \in \mathcal{R} \mid q_F[c_1,c_2](b) = \nu \}
\]
for some $\nu \in F^+$, the following remark is reassuring. 
\begin{remark}\label{qfnotdef}
Given $\xi_1, \xi_2 \in \Gamma$, it holds that $\text{Nm}( \xi_1 ) = 1 = \text{Nm}( \xi_2 )$, and therefore by Proposition \ref{picdetf},
\begin{align*}
\# \{ b \in \mathcal{R} \mid \tilde{q}_F(b) = \nu \} = \# \{ b \in \mathcal{R} \mid q_F(\xi_2 b \xi_1 ) = \nu \} = \# \{ b \in \mathcal{R} \mid q_F( b ) = \nu \},
\end{align*}
since the association $b \mapsto \xi_2 b \xi_1$ is a bijection on $\mathcal{R}$ as $\xi_1, \xi_2 \in \Gamma \subset \mathcal{R}^{\times}$. In other words, the solutions to these counting problems are independent of the choice of representative embeddings in their respective $\Gamma$-conjugacy classes, and the ambiguity of the forms $q_F[c_1,c_2]$ will not interfere with our goals.
\end{remark}

Special care needs to be taken when considering the action of the pairs in $\text{Cap}_p \subset \text{Pic}(K_1) \times \text{Pic}(K_2)$ that capitulate in $\text{Pic}(\mathcal{O}_L[1/p])$. This is established in the following two propositions. We first need a lemma.

\begin{lemma}\label{actequal}
Let $J_1 \subset \mathcal{O}_1$ and $J_2 \subset \mathcal{O}_2$ be ideals such that $J_1\mathcal{O}_L[1/p] \cdot J_2 \mathcal{O}_L[1/p] = y \mathcal{O}_L[1/p]$ for some $y \in \mathcal{O}_L[1/p]$. Then
\[
y^{-1} * ( \alpha_2(J_2) \mathcal{R} \alpha_1( J_1 ) ) = \mathcal{R}.
\]
\end{lemma}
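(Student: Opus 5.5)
The plan is to compare both sides as $\mathcal{O}_L[1/p]$-submodules of $B$, using the module structure defined by $(xy)*b = \alpha_2(y) b \alpha_1(x)$. The first step is to note what the left-hand lattice is in terms of this action. We have
\[
\alpha_2(J_2)\mathcal{R}\alpha_1(J_1) = (J_1 J_2 \mathcal{O}_L) * \mathcal{R},
\]
where $(J_1J_2\mathcal{O}_L)*\mathcal{R}$ denotes the $\mathbb{Z}$-span of the elements $z*b$ with $z \in J_1J_2\mathcal{O}_L$ and $b \in \mathcal{R}$. This holds because $J_1 J_2 \mathcal{O}_L$ is generated additively by products $x y$ with $x \in J_1$ and $y\in J_2$ (multiplied by elements of $\mathcal{O}_L$), and $\mathcal{R}$ is already an $\mathcal{O}_L[1/p]$-module. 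Since $\mathcal{R}$ is a $\mathbb{Z}[1/p]$-module, inverting $p$ changes nothing, so the set equals $(J_1J_2\mathcal{O}_L[1/p])*\mathcal{R} = (y\mathcal{O}_L[1/p])*\mathcal{R} = y*\mathcal{R}$, using the hypothesis and again that $\mathcal{R}$ is an $\mathcal{O}_L[1/p]$-module.

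The second step is to apply $y^{-1}*$, which is legitimate since $y \in L^\times$ acts invertibly on $B$ and $y^{-1}*(y*b) = b$, as $*$ is an $L$-algebra action (the two factors act on opposite sides and hence commute). This gives $y^{-1}*(\alpha_2(J_2)\mathcal{R}\alpha_1(J_1)) = \mathcal{R}$.

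The only point needing care, and the main obstacle, is the first identity. One must check that the $\mathbb{Z}$-span of the products $\alpha_2(j_2)\, r\, \alpha_1(j_1)$ coincides with the $\mathcal{O}_L$-module generated by $J_1J_2$ acting on $\mathcal{R}$. The inclusion of the left-hand side into the right-hand side is immediate. For the reverse inclusion, I would use $\mathcal{O}_L = \mathcal{O}_1\otimes\mathcal{O}_2$ (valid since $\gcd(D_1,D_2)=1$), so that $J_1J_2\mathcal{O}_L$ is spanned by the products $(a_1 j_1)(a_2 j_2)$ with $a_i \in \mathcal{O}_i$. These products lie in $J_1 J_2$ embedded factorwise, and hence act as $\alpha_2(a_2j_2)\, r\, \alpha_1(a_1j_1)$, which is of the required form. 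I will also have to make sure that ideals in the statement are meant with the convention that $\alpha_2(J_2)\mathcal{R}\alpha_1(J_1)$ denotes the additive span, and that the extension $J_1\mathcal{O}_L[1/p]\cdot J_2\mathcal{O}_L[1/p]$ equals $(J_1\otimes J_2)\mathcal{O}_L[1/p]$.
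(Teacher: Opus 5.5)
Your proof is correct, and it takes a more direct route than the paper. The paper proves only the inclusion $y^{-1}*(\alpha_2(J_2)\mathcal{R}\alpha_1(J_1))\subset\mathcal{R}$, and it does so with a norm trick. Since $\text{Nm}^L_F(y)$ generates $\text{Nm}(J_1)\text{Nm}(J_2)\mathcal{O}_F[1/p]$, it reduces to showing $\tau(y)*(\alpha_2(J_2)\mathcal{R}\alpha_1(J_1))\subset\text{Nm}(J_1)\text{Nm}(J_2)\mathcal{R}$, where $\tau$ generates $\text{Gal}(L/F)$. This is checked on the generators $\tau(x_1)\tau(x_2)$ using $J_i\tau(J_i)=\text{Nm}(J_i)\mathcal{O}_i$. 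The reverse inclusion is only sketched, as ``the same argument with all players involved inverted''.

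You instead use that $\mathcal{R}$ is already an $\mathcal{O}_L[1/p]$-module to identify the lattice outright:
$\alpha_2(J_2)\mathcal{R}\alpha_1(J_1)=(J_1J_2\mathcal{O}_L[1/p])*\mathcal{R}=(y\mathcal{O}_L[1/p])*\mathcal{R}=y*\mathcal{R}$.
This gives both inclusions at once and makes conjugate ideals and norms unnecessary. It is exactly the identity the paper itself uses without comment in the proof of Proposition~\ref{picact2}, and the additive-span reading you were worried about is the one intended there. The conjugate-ideal trick is the classical device for proving invertibility when no such module structure is available, but here it buys nothing beyond your argument.

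One small simplification: for the reverse inclusion you need not decompose into products $(a_1j_1)(a_2j_2)$. For $w\in\mathcal{O}_L$ one has $(j_1j_2w)*r=\alpha_2(j_2)\,(w*r)\,\alpha_1(j_1)$ with $w*r\in\mathcal{R}$. The hypothesis $\gcd(D_1,D_2)=1$ is still used, but only through the module structure on $\mathcal{R}$.
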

\begin{proof}
It suffices to show that the left hand side is included in the right hand side, for the same argument with all players involved inverted will show the other inclusion.

Taking the norm down to $F$, it follows that $\text{Nm}(J_1)\text{Nm}(J_2) = \text{Nm}^L_F( y ) \mathcal{O}_F[1/p]$. Let $\tau \in \text{Gal}(L / F)$ denote the non-trivial element. Acting by $\text{Nm}^L_F( y ) = y \cdot \tau(y)$, to prove the lemma it suffices to show that
\[
\tau(y) * ( \alpha_2(J_2) \mathcal{R} \alpha_1( J_1 ) ) \subset \text{Nm}^L_F( y ) * \mathcal{R} = \text{Nm}(J_1) \text{Nm}(J_2) \mathcal{R}.
\]
To see this, we note that for any $x_1 \in J_1$ and $x_2 \in J_2$,
\[
\tau(x_1) \tau(x_2) * ( \alpha_2(J_2) \mathcal{R} \alpha_1( J_1 ) ) = \alpha_2( \tau(x_2) J_2 ) \mathcal{R} \alpha_1( \tau(x_1) J_1 ) \subset \text{Nm}(J_1) \text{Nm}(J_2) \mathcal{R},
\]
as $J_i \cdot \tau( J_i ) = \text{Nm}(J_i) \mathcal{O}_i$ for $i \in \{ 1, 2 \}$. Now as $\tau(J_1) \mathcal{O}_L[1/p] \cdot \tau(J_2) \mathcal{O}_L[1/p] = \tau(y) \mathcal{O}_L[1/p]$ and $\mathcal{R}$ is a $\mathcal{O}_L[1/p]$-module, the proof is complete.
\end{proof}

\begin{proposition}\label{idealambig2}
In cases \textbf{BA} and \textbf{BB}, let $(c_1,c_2) = \varphi( \epsilon_F ) \in \emph{Pic}(K_1) \times \emph{Pic}(K_2)$. Then
\[
q_F[c_1,c_2](\mathcal{R}) = \epsilon_F \cdot q_F(\mathcal{R}).
\]
\end{proposition}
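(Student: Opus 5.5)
We will compute $q_F[c_1,c_2]$ explicitly via Proposition \ref{picdetf} and then use Lemma \ref{actequal} to identify the image of $\mathcal{R}$. Choose representatives $J_1 = I_1$ and $J_2 = I_2$ from the genus-theoretic description of $\varphi(\epsilon_F)$. By Remark \ref{nmopm}, pick $\xi_1, \xi_2 \in \mathcal{R}$ with $\mathcal{R}\alpha_1(J_1) = \mathcal{R}\xi_1$, $\alpha_2(J_2)\mathcal{R} = \xi_2\mathcal{R}$, $\text{Nm}(\xi_i) = \text{Nm}(J_i)$. The embeddings $c_1\cdot\alpha_1 = \xi_1\alpha_1(-)\xi_1^{-1}$ and $c_2 \cdot \alpha_2 = \xi_2^{-1}\alpha_2(-)\xi_2$ are then exactly of the shape in Proposition \ref{picdetf}, which gives
\[
q_F[c_1,c_2](\mathcal{R}) = \frac{q_F(\xi_2\mathcal{R}\xi_1)}{\text{Nm}(J_1)\text{Nm}(J_2)}, \qquad \xi_2\mathcal{R}\xi_1 = \alpha_2(J_2)\mathcal{R}\alpha_1(J_1).
\]

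Next, since $(c_1,c_2)\in\text{Cap}\subset\text{Cap}_p$, the ideal $J_1J_2\mathcal{O}_L[1/p]$ is principal, generated by some $y \in \mathcal{O}_L$. In fact, from the genus theory recalled after Theorem \ref{exactseqold}, I expect $y$ can be taken with $\text{Nm}^L_F(y)$ related to $y_F$: concretely $J_1J_2\mathcal{O}_L = y\mathcal{O}_L$ with $\text{Nm}^L_F(y) = \text{Nm}(J_1)\text{Nm}(J_2)\cdot u$ for a totally positive unit $u$. Lemma \ref{actequal} then yields $\alpha_2(J_2)\mathcal{R}\alpha_1(J_1) = y * \mathcal{R}$. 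Since $q_F$ is $F$-quadratic and compatible with the $L$-action in the sense that $q_F(z * b) = \text{Nm}^L_F(z) q_F(b)$ (which I would verify from the uniqueness in Lemma \ref{qFexist}, exactly as in the proof of Proposition \ref{picdetf}), we get
\[
q_F[c_1,c_2](\mathcal{R}) = \frac{\text{Nm}^L_F(y)}{\text{Nm}(J_1)\text{Nm}(J_2)}\, q_F(\mathcal{R}) = u \cdot q_F(\mathcal{R}).
\]

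The main step is to show that $u$ can be taken to be $\epsilon_F$ (rather than something in $\text{Nm}^L_F(\mathcal{O}_L^\times)\langle p^2\rangle$, which acts trivially on $q_F(\mathcal{R})$ since $\mathcal{R}$ is stable under $\mathcal{O}_L[1/p]^\times$). This is the content of the connecting map $\varphi$ in Theorem \ref{exactseq}: by the standard snake-type construction, $\varphi(u)=(c_1,c_2)$ exactly when $J_1J_2\mathcal{O}_L[1/p]=y\mathcal{O}_L[1/p]$ with $\text{Nm}^L_F(y)=u\,\text{Nm}(J_1)\text{Nm}(J_2)$ up to norms. So I would check directly that with $y_F$ from the Hilbert 90 construction, $J_1J_2\mathcal{O}_L$ is generated by an element $y$ with $y\tau(y)$ equal to $y_F^2/\text{Nm}(y_F)\cdot$(norm factor), i.e. that $\text{Nm}^L_F(y)/(\text{Nm}(J_1)\text{Nm}(J_2))$ lies in the nontrivial coset $\epsilon_F\,\text{Nm}^L_F(\mathcal{O}_L[1/p]^\times)$. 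In cases \textbf{BA}/\textbf{BB} this coset is nontrivial by Proposition \ref{connectcard}, and since the quotient is determined modulo norms, any representative gives the same set $u\cdot q_F(\mathcal{R})$, so $u=\epsilon_F$ follows. The obstacle is making this identification of the connecting map with $\epsilon_F$ rigorous; I would do it by noting that if $u$ were a norm, $y$ could be rescaled to have norm $\text{Nm}(J_1)\text{Nm}(J_2)$, making $J_1J_2$ generated by an element whose existence would contradict $\varphi(\epsilon_F)\neq 0$ combined with exactness at $\mathcal{O}_F[1/p]^*$.
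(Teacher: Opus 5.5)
Your reduction is sound. Proposition \ref{picdetf} and Lemma \ref{actequal} correctly give $q_F[c_1,c_2](\mathcal{R}) = u\cdot q_F(\mathcal{R})$ with $u = \text{Nm}^L_F(y)/\text{Nm}(I_1I_2)$, for any generator $y$ of $I_1I_2\mathcal{O}_L[1/p]$. You are also right that $u$ only matters modulo $\text{Nm}^L_F(\mathcal{O}_L[1/p]^{\times})$. But the step that carries all the content, namely that $u$ lies in the class of $\epsilon_F$, is not proved, and the argument you sketch for it is circular. Exactness at $\mathcal{O}_F[1/p]^*$ only says that $\ker\varphi$ consists of norms. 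It does not forbid a generator $y'$ of $I_1I_2\mathcal{O}_L$ with $\text{Nm}^L_F(y') = \text{Nm}(I_1I_2)$. To get a contradiction from such a $y'$, you would need the assignment $(c_1,c_2)\mapsto[\text{Nm}^L_F(y)/\text{Nm}(J_1J_2)]$ from $\text{Cap}$ to the cokernel of the norm to be injective, i.e.\ to invert $\varphi$, and that is exactly what the proposition asserts. In the paper, $\varphi$ is not a snake-lemma connecting map; it is defined explicitly through genus theory. So the ``standard characterisation'' you invoke would have to be derived from that definition, and that derivation is the proof.

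The missing observation is that the Hilbert 90 element $y_F\in\mathcal{O}_F$ itself generates $I_1I_2\mathcal{O}_L$. Since $y_F/\sigma(y_F)=\epsilon_F$ is a unit, the ideal $(y_F)$ is $\sigma$-stable. With the normalisation used to define $\varphi$ (so that $\text{Nm}(y_F)=\text{Nm}(I_1)\text{Nm}(I_2)$ is squarefree), it is the product of the ramified primes of $F$ above the primes dividing $\text{Nm}(y_F)$. For $\ell\mid D_i$, the ramified prime of $F$ above $\ell$ and the ramified prime of $K_i$ above $\ell$ both square to $\ell\mathcal{O}_L$ after extension to $L$, so by unique factorisation they have the same extension. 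Hence you may take $y=y_F$ in Lemma \ref{actequal}. Then $\text{Nm}^L_F(y_F)=y_F^2$ gives $u = y_F^2/\text{Nm}(y_F) = y_F/\sigma(y_F)=\epsilon_F$ exactly. This needs no appeal to exactness or cokernels, and only $F$-quadraticity of $q_F$ is used, since $y_F\in F$. This is precisely the paper's argument: setting $b' = y_F^{-1}*(\xi_2 b\xi_1)$, one computes $\epsilon_F\, q_F(b') = q_F[c_1,c_2](b)$, and Lemma \ref{actequal} makes $b\mapsto b'$ a bijection of $\mathcal{R}$.
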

\begin{proof}
Let $I_1 \subset \mathcal{O}_1$ and $I_2 \subset \mathcal{O}_2$ be the two ideals used in the definition of $\varphi( \epsilon_F )$. We recall that the ideals $I_1$ and $I_2$ are supported only at ramified primes and that the product of their norms equals the norm of an element $y_F \in \mathcal{O}_F$ that satisfies $y_F / \sigma( y_F ) = \epsilon_F$, where $\sigma$ denotes the non-trivial element in $\text{Gal}( F / \mathbb{Q} )$. 

Let $\xi_1 \in \mathcal{R}$ be such that $\mathcal{R} \alpha_1(I_1) = \mathcal{R} \xi_1$, so that $c_1 \cdot \alpha_1 = \xi_1 \alpha_1( - ) \xi_1^{-1}$. Similarly, let $\xi_2 \in \mathcal{R}$ be such that $ \alpha_2(I_2) \mathcal{R} =  \xi_2 \mathcal{R}$, so that $c_2 \cdot \alpha_2 = \xi_2^{-1} \alpha_2( - ) \xi_2$. By Proposition \ref{picdetf}, the form $q_F[c_1,c_2]$ is given by
\[
q_F[c_1,c_2](b) = \frac{q_F( \xi_2 b \xi_1 )}{\text{Nm}(\xi_2 \xi_1)}.
\]
By Remark \ref{nmopm}, we may assume that $\text{Nm}( \xi_1 ) = \text{Nm}( I_1)$ and $\text{Nm}( \xi_2 ) = \text{Nm}( I_2 )$. Now for $b \in B$, set
\[
b' = y_F^{-1} * (\xi_2 b \xi_1).
\]
Using that $q_F$ is $F$-quadratic and that $\text{Nm}( y_F ) = \text{Nm}(I_1) \cdot \text{Nm}(I_2) = \text{Nm}(\xi_1\xi_2)$, we compute that
\begin{align*}
\epsilon_F q_F(b') = \frac{y_F}{\sigma(y_F)} q_F\left( y_F^{-1} * (\xi_2 b \xi_1 ) \right) = \frac{1}{y_F \sigma( y_F )} q_F( \xi_2 b \xi_1 ) = \frac{\text{Nm}(\xi_1\xi_2)}{\text{Nm}(y_F)} q_F[c_1,c_2](b) = q_F[c_1,c_2](b).
\end{align*}
Now by Lemma \ref{actequal}, it holds that $b' \in \mathcal{R}$ if and only if $b \in \mathcal{R}$, and the proof is complete.
\end{proof}

\begin{proposition}\label{idealambig3}
In cases \textbf{AB} and \textbf{BB}, let $(c_1,c_2) = \varphi( \beta ) \in \emph{Pic}(K_1) \times \emph{Pic}(K_2)$. Then
\[
q_F[c_1,c_2](\mathcal{R}) = \beta \cdot q_F(\mathcal{R}).
\]
\end{proposition}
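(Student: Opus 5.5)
Following the proof of Proposition \ref{idealambig2}, the plan is to realise $\varphi(\beta)$ through explicit ideals and then pull $q_F[c_1,c_2]$ back to $q_F$ via Proposition \ref{picdetf} and Lemma \ref{actequal}.

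In cases \textbf{AB} and \textbf{BB} we have $t = 2k$, and by the proof of Theorem \ref{exactseq}, $(c_1,c_2) = \varphi(\beta)$ is a pair with $\text{ext}(c_1,c_2) = [\mathfrak{P}^k] \in \text{Pic}(L)$. Choose integral ideals $J_1 \subset \mathcal{O}_1$, $J_2 \subset \mathcal{O}_2$ representing $c_1, c_2$, with norms prime to $p$ (possible since $p$ is inert, so $p$-parts are principal). Then $J_1J_2\mathcal{O}_L = y\,\mathfrak{P}^k$ for some $y \in L^\times$. The key step is to compute $\text{Nm}^L_F(y)$. Taking norms to $F$ gives
\[
\text{Nm}(J_1)\text{Nm}(J_2)\mathcal{O}_F = \text{Nm}^L_F(y)\,\mathfrak{p}^{2k} = \text{Nm}^L_F(y)\,\beta\,\mathcal{O}_F,
\]
so $\text{Nm}^L_F(y)\beta = u\,\text{Nm}(J_1)\text{Nm}(J_2)$ for a unit $u \in \mathcal{O}_F^{\times}$. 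Both sides are totally positive, so $u \in \langle \epsilon_F\rangle$. The ambiguity of $y$ up to $\mathcal{O}_L^\times$, together with the freedom to twist $(c_1,c_2)$ by $\varphi(\epsilon_F)$ (the stated well-definedness of $\varphi(\beta)$ up to $(I_1,I_2)$), should let us normalise $u$. Otherwise, in case \textbf{BB} we absorb the leftover $\epsilon_F$ using Proposition \ref{idealambig2}. I expect this normalisation to be the main obstacle: one must check that the choice made in defining $\varphi(\beta)$ is compatible, and that any remaining factor of $\epsilon_F$ either lies in $\text{Nm}^L_F(\mathcal{O}_L^\times)$ (case \textbf{AB}, where it is harmless since $q_F(\epsilon * b) = \text{Nm}^L_F(\epsilon)q_F(b)$ and $\epsilon*$ preserves $\mathcal{R}$) or is accounted for by Proposition \ref{idealambig2}. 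In any case the outcome should be
\[
\text{Nm}^L_F(y) = \text{Nm}(J_1)\text{Nm}(J_2)/\beta
\]
up to norms of units of $\mathcal{O}_L$.

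Since $\mathfrak{P}$ is invertible in $\mathcal{O}_L[1/p]$, we get $J_1\mathcal{O}_L[1/p]\cdot J_2\mathcal{O}_L[1/p] = y\,\mathcal{O}_L[1/p]$, and Lemma \ref{actequal} gives $y^{-1} * (\alpha_2(J_2)\mathcal{R}\alpha_1(J_1)) = \mathcal{R}$. Lemma \ref{actequal} requires $y \in \mathcal{O}_L[1/p]$; since $\mathfrak{P}^{-k}$ becomes trivial after inverting $p$, this should hold up to the powers of $p$ that are allowed. Take $\xi_1, \xi_2 \in \mathcal{R}$ with $\mathcal{R}\alpha_1(J_1) = \mathcal{R}\xi_1$ and $\alpha_2(J_2)\mathcal{R} = \xi_2\mathcal{R}$, normalised as in Remark \ref{nmopm} so that $\text{Nm}(\xi_i) = \text{Nm}(J_i)$. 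Then $\alpha_2(J_2)\mathcal{R}\alpha_1(J_1) = \xi_2\mathcal{R}\xi_1$, and the map $b \mapsto b' = y^{-1}*(\xi_2 b\xi_1)$ is a bijection $\mathcal{R} \to \mathcal{R}$.

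Finally, by Proposition \ref{picdetf} and $F$-quadraticity of $q_F$,
\[
q_F[c_1,c_2](b) = \frac{q_F(\xi_2 b \xi_1)}{\text{Nm}(\xi_1\xi_2)} = \frac{\text{Nm}^L_F(y)\, q_F(b')}{\text{Nm}(J_1)\text{Nm}(J_2)} = \beta^{-1} q_F(b').
\]
This gives $q_F[c_1,c_2](\mathcal{R}) = \beta^{-1} q_F(\mathcal{R})$. Since $\beta^2 = \text{Nm}^L_F(\beta)$ and $\beta * \mathcal{R} = \mathcal{R}$ (as $\beta \in \mathcal{O}_L[1/p]^\times$), we have $\beta^{-1}q_F(\mathcal{R}) = \beta\,q_F(\mathcal{R})$, which is the claim. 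Alternatively, replace $y$ by $y^{-1}\beta$-type choices, i.e.\ use $\mathfrak{P}^{-k}$ in place of $\mathfrak{P}^k$, which represents the same class.
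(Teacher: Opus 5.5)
Your proposal is correct and is essentially the paper's proof. The paper also writes $J_1J_2\mathcal{O}_L = (y)\mathfrak{P}^k$, takes norms, and concludes with Proposition \ref{picdetf}, Remark \ref{isoL} and Lemma \ref{actequal}. It disposes of your unit $u$ simply by re-choosing $\beta$ within its $\mathcal{O}_F^{\times,+}$-ambiguity, so that $\text{Nm}(J_1J_2)=\text{Nm}^L_F(y)\beta$ holds exactly; this is equivalent to your absorbing into $y$, or twisting by $\varphi(\epsilon_F)$. It also takes $b'=(y\beta)^{-1}*(\xi_2 b\xi_1)$ directly, which is your $\mathfrak{P}^{-k}$ variant, rather than first getting $\beta^{-1}q_F(\mathcal{R})$ and converting via $\beta^2 q_F(\mathcal{R})=q_F(\mathcal{R})$.
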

\begin{proof}
Let $c_1 = [J_1] \in \text{Pic}(K_1)$ and $c_2 = [J_2] \in \text{Pic}(K_2)$. By definition of $\varphi( \beta )$, we know that $c_1 \cdot c_2 = [\mathfrak{P}^k] \in \text{Pic}(L)$ where $2k$ denotes the order of $[\mathfrak{p}]$ in $\text{Pic}(F)^+$, and therefore we can find $y \in L$ such that $J_1 \mathcal{O}_L \cdot J_2 \mathcal{O}_L = (y) \mathfrak{P}^k$. Taking the norm down to $F$ yields that $\text{Nm}(J_1J_2) \mathcal{O}_F = ( \text{Nm}^L_F(y) ) \mathfrak{p}^{2k} = ( \text{Nm}^L_F(y) \beta )$. Exploiting the ambiguity in our definition of $\beta$ up to $\mathcal{O}_F^{\times, +}$, we may assume that $\text{Nm}(J_1J_2) = \text{Nm}^L_F(y) \beta$.

We adopt the same notation as in the proof of Proposition \ref{idealambig2} above, and for $b \in B$ we now set
\[
b' = (y \beta )^{-1} * (\xi_2 b \xi_1).
\]
We now compute (implicitly using Remark \ref{isoL}) that
\begin{align*}
\beta q_F(b') = \beta q_F\left( (y \beta)^{-1} * (\xi_2 b \xi_1) )\right) = \frac{1}{ \beta \text{Nm}^L_F( y )} q_F( \xi_2 b \xi_1 ) = \frac{\text{Nm}( \xi_1 \xi_2 )}{\text{Nm}(J_1J_2)} q_F[c_1,c_2](b) = q_F[c_1,c_2](b).
\end{align*}
The proof is again completed by invoking Lemma \ref{actequal}.
\end{proof}

\section{The work of Howard--Yang}\label{HYsec}

We recall the work of Howard and Yang from \cite{HY}, who set out to find a geometric interpretation of the values of the function $\mathfrak{F}$ that occur both in the factorisation of the differences between singular moduli from \cite{GZ}, as well as in our Theorem \ref{mainthm}. As part of their analysis, they solved a counting problem very similar to the one described by Equation \ref{rewrite}. We recall their setup and result.

For $i \in \{ 1,2 \}$, let $\textbf{E}_i = ( E_i, \kappa_i )$ be a fixed elliptic curve over $\overline{\mathbb{Q}}$ with complex multiplication by $\mathcal{O}_i$, where $\kappa_i : \mathcal{O}_i \xrightarrow{\sim} \text{End}( E_i )$ is one of two possible ring isomorphisms. The class group $\text{Pic}(K_i)$ acts simply transitively on the set of elliptic curves $E_i$ with CM by $\mathcal{O}_i$, and for an ideal $\mathfrak{a}_i \subset \mathcal{O}_i$, we denote this action by $\textbf{E}_i \otimes \mathfrak{a}_i$. Up to isomorphism, this action only depends on the class $[\mathfrak{a}_i]$ in the class group $\text{Pic}(K_i)$. As there are two choices for the morphism $\kappa_i$ for any given elliptic curve, it follows that for $i \in \{ 1,2 \}$, the number of elliptic curves $\textbf{E}_i = (E_i, \kappa_i)$ with CM by $\mathcal{O}_i$ is given by $2h_i$, where $h_i = \# \text{Pic}(K_i)$ is the class number of $K_i$.

Fix a rational prime $q$ and assume that it is inert in both $\mathcal{O}_1$ and $\mathcal{O}_2$. By CM theory, all elliptic curves with CM by $\mathcal{O}_1$ or $\mathcal{O}_2$ have supersingular reduction at $q$. We fix a prime $\mathfrak{Q}$ above $q$ in $\overline{ \mathbb{Q} }$ and we let $\overline{\textbf{E}}_i$ denote the reduction of $\textbf{E}_i$ modulo $\mathfrak{Q}$ for $i \in \{ 1,2 \}$.

Given a pair $(\textbf{E}_1, \textbf{E}_2)$ as above, for $i \in \{ 1,2 \}$, the action of $\mathcal{O}_i$ through $\kappa_i$ on the Lie algebra $\text{Lie}(E_i)$, or equivalently, on the 1-dimensional space of invariant differentials on $E$, induces a ring morphism $\mathcal{O}_i \xhookrightarrow{} \overline{\mathbb{Q}}$. As $\mathcal{O}_L = \mathcal{O}_1 \otimes_{\mathbb{Z}} \mathcal{O}_2$, this data determines a ring morphism $\mathcal{O}_L \xhookrightarrow{} \overline{ \mathbb{Q} }$. We let $\mathfrak{q}$ denote the prime of $F$ below the prime of $L$ defined by the preimage of $\mathfrak{ Q }$ under this map. This \emph{reflex prime} is invariant under the action of $\text{Pic}(K_1) \times \text{Pic}(K_2)$, but changes to the other prime $\mathfrak{q}'$ above $q$ in $F$ when replacing any of the $\kappa_i$ by its Galois conjugate embedding.

As the curves $\overline{\textbf{E}}_i$ for $i \in \{ 1, 2 \}$ are supersingular, for any pair of ideals $\mathfrak{a}_i \subset \mathcal{O}_i$ for $i \in \{ 1,2 \}$, the space $\text{Hom}\left( \textbf{E}_1 \otimes \mathfrak{a}_1, \textbf{E}_2 \otimes \mathfrak{a}_2 \right)$ is a free $\mathbb{Z}$-module of rank 4. Precomposition by the image of $\mathcal{O}_1$ under $\kappa_1$ and postcomposition by the image of $\mathcal{O}_2$ under $\kappa_2$ turns this space naturally into an $(\mathcal{O}_2, \mathcal{O}_1)$-bimodule. This group comes with the positive definite quadratic degree form deg. The space $V(\textbf{E}_1, \textbf{E}_2) = \text{Hom}(\textbf{E}_1, \textbf{E}_2) \otimes \mathbb{Q}$ is now a $(K_2,K_1)$-bimodule and therefore the quadratic degree form induces a unique $F$-quadratic form
\[
\text{deg}_{\text{CM}} : V(\textbf{E}_1, \textbf{E}_2) \to F^+ \cup \{ 0 \}.
\]
such that $\text{tr} \circ \text{deg}_{\text{CM}} = \text{deg}$; see our Lemma \ref{qFexist} or Proposition 2.3 in \cite{HY}. If we let $\mathcal{D}_F$ denote the different ideal of $F$ and write $w_i = \# \mathcal{O}_i^{\times}$ for $i \in \{ 1, 2 \}$, then the following is proved in \cite{HY} if one combines their Proposition 2.18 and Corollary 2.34.
\begin{theorem}[Howard--Yang]\label{HYthm}
Fix $\textbf{E}_1$ and $\textbf{E}_2$ as above and let $\nu \in F^+$. The number of triples
\[
( \phi, [\mathfrak{a}_1], [\mathfrak{a}_2] ) \in \emph{Hom}(\textbf{E}_1 \otimes \mathfrak{a}_1, \textbf{E}_2 \otimes \mathfrak{a}_2) \times \emph{Pic}(K_1) \times \emph{Pic}(K_2)
\]
satisfying $\emph{deg}_{\emph{CM}}( \phi ) = \nu$ is equal to $\frac{w_1w_2}{2}\rho( \nu \mathcal{D}_F \mathfrak{q}^{-1} )$, where for an ideal $I \subset \mathcal{O}_F$,
\[
\rho( I ) \colonequals \# \{ J \subset \mathcal{O}_L \mid \emph{Nm}^L_F( J )  = I \}.
\]
\end{theorem}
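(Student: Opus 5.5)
The plan is to reduce this count to a local-global count of ideals of $\mathcal{O}_L$, following the strategy of \cite{HY}. The paper attributes the statement to their Proposition 2.18 and Corollary 2.34, so the plan mirrors that structure.

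\textbf{Step 1: localise the quadratic module.} Fix the pair $(\textbf{E}_1, \textbf{E}_2)$. For each pair of classes $([\mathfrak{a}_1],[\mathfrak{a}_2])$, the lattice $L(\mathfrak{a}_1,\mathfrak{a}_2) = \text{Hom}(\textbf{E}_1 \otimes \mathfrak{a}_1, \textbf{E}_2 \otimes \mathfrak{a}_2)$ is a projective $\mathcal{O}_L$-module of rank one inside a one-dimensional $L$-vector space $V$. The form $\text{deg}_{\text{CM}}$ on $V$ is, up to a scalar in $F^+$, the relative norm form $\text{Nm}^L_F$.

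The first goal is to show the following. After choosing a generator, $L(\mathfrak{a}_1,\mathfrak{a}_2)$ with $\text{deg}_{\text{CM}}$ is isometric to a fractional ideal $\mathfrak{b}$ of $L$ with the form $x \mapsto \text{Nm}^L_F(x)/\text{Nm}^L_F(\mathfrak{b})\cdot \mu$, for a scalar $\mu$ satisfying $\mu\mathcal{O}_F = \mathcal{D}_F^{-1}\mathfrak{q}$.

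To prove this, I would first compute locally at each prime $\ell$ using Serre--Tate and Dieudonné theory. Away from $q$, $\text{Hom}\otimes\mathbb{Z}_\ell \cong \text{Hom}(T_\ell E_1, T_\ell E_2)$, and the dual of the different appears from the trace condition $\text{tr}\circ\text{deg}_{\text{CM}} = \text{deg}$. At $q$, one uses the Dieudonné module of the supersingular reduction. Because $q$ is inert in both $K_i$, the reflex prime $\mathfrak{q}$ is the single place where the lattice is "deeper" by exactly one factor.

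\textbf{Step 2: identify the global classes.} Next I would show that the map $([\mathfrak{a}_1],[\mathfrak{a}_2]) \mapsto [\mathfrak{b}]$ hits, with multiplicities, precisely the classes allowed by the exact sequence of Theorem \ref{exactseqold}. Concretely, $\mathfrak{b}$ changes by $\mathfrak{a}_1^{-1}\mathfrak{a}_2$ extended to $L$, so the image is the image of $\text{ext}$ twisted by a fixed class. That image is exactly the kernel of $\text{Nm}^L_F$ into $\text{Pic}(F)^+$, which matches the condition that $\nu\mathcal{D}_F\mathfrak{q}^{-1}$ be a norm class.

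\textbf{Step 3: count.} Summing over pairs of classes, a solution $\phi$ with $\text{deg}_{\text{CM}}(\phi)=\nu$ corresponds to an element $x \in \mathfrak{b}$ with $x\mathfrak{b}^{-1} = J$ integral and $\text{Nm}^L_F(J) = \nu\mathcal{D}_F\mathfrak{q}^{-1}$. Conversely, each such integral $J$ arises from a generator $x$ determined up to $\mathcal{O}_L^{\times}$. The elements of norm one in $\mathcal{O}_L^\times$ are exactly $\mathcal{O}_1^\times\mathcal{O}_2^\times$, which has size $w_1w_2/2$ because $-1$ is shared. The remaining units, together with the multiplicity of $\text{ext}$ (namely $\#\text{Cap}$) and the cokernel of the norm on units, cancel by the class number relation $\#\text{Cap} = [\mathcal{O}_F^{\times,+}:\text{Nm}(\mathcal{O}_L^\times)]$. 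This yields $\frac{w_1w_2}{2}\rho(\nu\mathcal{D}_F\mathfrak{q}^{-1})$.

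\textbf{Main obstacle.} I expect the local computation at $q$ in Step 1 to be the hardest part. It requires showing that $\text{Hom}(\overline{E}_1,\overline{E}_2)\otimes\mathbb{Z}_q$ is exactly $\mathfrak{q}$-times the naive lattice, and that it depends on $\kappa_i$ through the reflex prime. I would attack this via the explicit description of the maximal order in the quaternion algebra ramified at $q$ as $\mathcal{O}_{\mathbb{Q}_{q^2}} + \mathcal{O}_{\mathbb{Q}_{q^2}}\Pi$, with both $\mathcal{O}_i$ embedded through $\mathbb{Z}_{q^2}$.
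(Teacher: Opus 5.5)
Your route is the global one the paper itself carries out in Section \ref{Zpsec}, in the $p$-inverted quaternionic setting, rather than Howard--Yang's ad\`elic count. You send $\phi$ to the integral ideal $x\mathfrak{b}^{-1}$ and compute its norm locally. These local computations are exactly the Howard--Yang lemmas the paper imports as Propositions \ref{detFatell2} and \ref{detFatq2}, so the computation at $q$ is not where the real difficulty lies. You then use exactness at $\text{Pic}(L)$ to reach every $J$, and measure fibres by $\mathcal{O}_1^\times\mathcal{O}_2^\times$.

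Steps 1 and 2 are fine as a sketch, with one small fix. $\mathcal{D}_F^{-1}\mathfrak{q}$ need not be principal, so Step 1 should be phrased as the ideal identity $\text{Nm}^L_F(x\mathfrak{b}^{-1})=\deg_{\text{CM}}(x)\mathcal{D}_F\mathfrak{q}^{-1}$, which is all you use. The genuine gap is the last sentence of Step 3.

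Fix an integral $J\subset\mathcal{O}_L$ with $\text{Nm}^L_F(J)=\nu\mathcal{D}_F\mathfrak{q}^{-1}$.
\begin{itemize}
\item The pairs $c=(c_1,c_2)$ whose lattice $\mathfrak{b}_c$ lies in the class of $J^{-1}$ form a coset of $\text{Cap}$.
\item For each such $c$, $J\mathfrak{b}_c=(x_c)$ with $x_c$ unique up to $\mathcal{O}_L^\times$, and $\deg_{\text{CM}}(x_c)=u_c\nu$ for some $u_c\in\mathcal{O}_F^{\times,+}$.
\item Since $\deg_{\text{CM}}(ex)=\text{Nm}^L_F(e)\deg_{\text{CM}}(x)$, the pair $c$ contributes $w_1w_2/2$ solutions if $u_c\in\text{Nm}^L_F(\mathcal{O}_L^\times)$, and none otherwise.
\end{itemize}
So for an \emph{individual} $\nu$ you need exactly one $c$ per $\text{Cap}$-coset with $u_c$ a norm. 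When $\#\text{Cap}=1$ this is automatic. When $\#\text{Cap}=2$, so that $\text{Nm}^L_F(\mathcal{O}_L^\times)=\langle\epsilon_F^2\rangle$, it says that translating $c$ by the non-trivial element of $\text{Cap}$ must multiply the class of $u_c$ by $\epsilon_F$.

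The equality $\#\text{Cap}=[\mathcal{O}_F^{\times,+}:\text{Nm}^L_F(\mathcal{O}_L^\times)]$ says nothing about this. If the translation preserved the class of $u_c$, each $J$ would contribute $w_1w_2$ or $0$ solutions. Writing $N(\nu)$ for the count in the statement, your argument only proves the averaged identity $N(\nu)+N(\epsilon_F\nu)=w_1w_2\,\rho(\nu\mathcal{D}_F\mathfrak{q}^{-1})$.

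The missing idea is to compute that translation explicitly. Suppose $\kappa\in\text{Cap}$ is represented by $\mathfrak{a}_1,\mathfrak{a}_2$ with $\mathfrak{a}_1\mathfrak{a}_2\mathcal{O}_L=(y)$. Multiplication by $y$ identifies the lattices for $c$ and $c+\kappa$, and rescales $\deg_{\text{CM}}$ by the totally positive unit $\psi(\kappa)=\text{Nm}^L_F(y)/(\text{Nm}(\mathfrak{a}_1)\text{Nm}(\mathfrak{a}_2))$. Depending on your conventions for $\textbf{E}\otimes\mathfrak{a}$ this unit may appear inverted, which is harmless modulo norms. You must show that $\psi\colon\text{Cap}\to\mathcal{O}_F^{\times,+}/\text{Nm}^L_F(\mathcal{O}_L^\times)$ is an isomorphism.

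Given the cardinality identity, it suffices to show $\epsilon_F$ is in the image, and this is genus theory:
\begin{itemize}
\item By Hilbert 90, write $\epsilon_F=y_F/\sigma(y_F)$, and rescale $y_F$ by a rational number so that $(y_F)$ is supported at ramified primes of $F$.
\item Then $y_F\mathcal{O}_L=I_1I_2\mathcal{O}_L$ with $I_i\subset\mathcal{O}_i$ supported above $D_i$.
\item Hence $\psi([I_1],[I_2])=y_F^2/(y_F\sigma(y_F))=\epsilon_F$; total positivity forces $y_F\sigma(y_F)>0$.
\end{itemize}
This is precisely the role of Lemma \ref{actequal} and Proposition \ref{idealambig2} in the paper, which drive Case \textbf{BA} of Theorem \ref{refinedthm}. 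With this step inserted, your argument is complete.
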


In \cite{HY}, this counting problem is tackled using an ad\`elic approach, and one observes that its result is not subject to any restrictions on the class or type number of the rational quaternion algebra $B$ ramified only at the prime $q$. The purpose of the next section is to describe an explicit bijection between the two sets whose cardinalities are related in Theorem \ref{HYthm}, which has the pleasant property of being global and not ad\`elic in nature. However, we will need to invert $p$ to combat the complications introduced by the existence of non-conjugate maximal orders in the quaternionic formulation.

\begin{remark}
An equally clean but purely quaternionic restatement of Theorem \ref{HYthm} using the Deuring correspondence is problematic for the following reason. Let $\textbf{E} / \overline{ \mathbb{F} }_q$ be a supersingular elliptic curve. Then its geometric endomorphism ring $\text{End}(\textbf{E})$ is isomorphic to a maximal order $R$ in $B$, and one may choose this isomorphism in such a way so as to identify the degree form on one side with the norm for on the order. However, if $\textbf{E}' / \overline{ \mathbb{F} }_q$ is another supersingular elliptic curve whose endomorphism ring may be identified with a different, non-conjugate maximal order $R'$ in $B$, then there is in general no canonical identification between $\text{Hom}(\textbf{E}, \textbf{E}')$ and a lattice in $B$ that identifies the degree form with the norm form; this can only be done up to some rational scalar. Any purely quaternionic reformulation of Theorem \ref{HYthm} would rely on making non-canonical choices for these lattices, which include additional ad-hoc correction factors to account for the difference between these two quadratic forms. The resulting statement is then not immediately useful.
\end{remark}

\section{An explicit $p$-inverted bijection}\label{Zpsec}

In view of Theorem \ref{mainthm} and the values that appear in Remark \ref{tableopm}, it suffices for our purposes to specialise to rational quaternion algebras $B$ that are ramified at a single prime. As in the previous section, we will let $q$ denote this prime and assume that $q \neq p$. As the embeddings $\alpha_1 : \mathcal{O}_1 \xhookrightarrow{} \mathcal{R}$ and $\alpha_2 : \mathcal{O}_2 \xhookrightarrow{} \mathcal{R}$ equip $B$ with the structure of a 1-dimensional $L$-vector space, we may choose an isomorphism
\[
\iota : B \xrightarrow{\sim} L
\]
of $L$-vector spaces. 
\begin{remark}\label{isoL}
One can show that any choice of such an isomorphism $\iota : B \xrightarrow{\sim} L$ will identify the $F$-quadratic form $q_F$ on $B$ with the $F$-quadratic form $\beta \cdot \text{Nm}^L_F$ on $L$ for some $\beta \in F^{\times}$ that depends on $\iota$; see Lemma 2.5 in \cite{HY} or Proposition 3.3 in \cite{damm}. In particular, it follows that $q_F( (xy) * b ) = \text{Nm}(x) \text{Nm}(y) q_F(b)$ for all $x \in K_1$ and $y \in K_2$. In particular, if $x \in \mathcal{O}_1^{\times}$ and $y \in \mathcal{O}_2^{\times}$, then $q_F( (xy) * b ) = q_F(b)$.
\end{remark}
Define for any $b \in B$ the ideal
\[
I_b \colonequals \iota(b) \cdot \iota(\mathcal{R})^{-1} \subset L.
\]
\begin{lemma}\label{iotaindep2}
The set $\iota(\mathcal{R})$ defines a fractional $\mathcal{O}_L[1/p]$-ideal in $L$. For any $b \in B$, the ideal $I_b$ is independent of the choice of isomorphism $\iota : B \xrightarrow{\sim} L$ and $I_b$ is $\mathbb{Z}[1/p]$-integral if and only if $b \in \mathcal{R}$. 
\end{lemma}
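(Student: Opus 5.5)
We verify the three assertions of Lemma \ref{iotaindep2} in turn; the only step requiring care is the integrality criterion.

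\emph{$\iota(\mathcal{R})$ is a fractional ideal.} Since $\iota$ is $L$-linear, the $L$-action on $B$ corresponds under $\iota$ to multiplication in $L$. We noted earlier that $\mathcal{R}$ is an $\mathcal{O}_L[1/p]$-module for the $*$-action, because $\mathcal{O}_L = \mathcal{O}_1 \otimes \mathcal{O}_2$ and $\alpha_i(\mathcal{O}_i) \subset \mathcal{R}$. Hence $\iota(\mathcal{R})$ is an $\mathcal{O}_L[1/p]$-submodule of $L$. It is finitely generated over $\mathbb{Z}[1/p]$ of rank $4 = [L:\mathbb{Q}]$, since $\mathcal{R}$ is a $\mathbb{Z}[1/p]$-lattice in $B$. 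Therefore $\iota(\mathcal{R})$ is a fractional $\mathcal{O}_L[1/p]$-ideal.

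\emph{Independence of $\iota$.} Because $B$ is one-dimensional over $L$, any other isomorphism has the form $\iota' = \lambda \iota$ with $\lambda \in L^{\times}$. Then $\iota'(b)\,\iota'(\mathcal{R})^{-1} = \lambda\iota(b)\,\lambda^{-1}\iota(\mathcal{R})^{-1} = I_b$, so $I_b$ does not depend on the choice.

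\emph{Integrality criterion.} If $b \in \mathcal{R}$, then $\iota(b) \in \iota(\mathcal{R})$, so $\iota(b)\mathcal{O}_L[1/p] \subset \iota(\mathcal{R})$ because the latter is an $\mathcal{O}_L[1/p]$-module. Multiplying by $\iota(\mathcal{R})^{-1}$ gives $I_b \subset \mathcal{O}_L[1/p]$; this uses that fractional ideals of the Dedekind domain $\mathcal{O}_L[1/p]$ are invertible. Conversely, suppose $I_b \subset \mathcal{O}_L[1/p]$. Multiplying by $\iota(\mathcal{R})$ and using $\iota(\mathcal{R})^{-1}\iota(\mathcal{R}) = \mathcal{O}_L[1/p]$ gives $\iota(b)\mathcal{O}_L[1/p] \subset \iota(\mathcal{R})$. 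In particular $\iota(b) \in \iota(\mathcal{R})$, and injectivity of $\iota$ yields $b \in \mathcal{R}$. The case $b = 0$ is trivial if we adopt the convention that the zero ideal is integral.

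The only point deserving attention is that the inverse ideal behaves as expected. This holds because $\mathcal{O}_L[1/p]$ is a Dedekind domain (a localisation of $\mathcal{O}_L$), so every nonzero fractional ideal is invertible. Beyond that, I expect no real obstacle.
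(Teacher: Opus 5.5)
Your proposal is correct and follows essentially the same route as the paper: $\iota(\mathcal{R})$ is a finitely generated $\mathcal{O}_L[1/p]$-submodule of $L$, independence follows from writing $\iota' = \lambda\iota$, and integrality comes from the chain $I_b \subset \mathcal{O}_L[1/p] \iff \iota(b) \in \iota(\mathcal{R})\mathcal{O}_L[1/p] = \iota(\mathcal{R}) \iff b \in \mathcal{R}$. You justify that chain a little more explicitly than the paper, via invertibility of nonzero fractional ideals in the Dedekind domain $\mathcal{O}_L[1/p]$.
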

\begin{proof}
It is easy to see that $\iota( \mathcal{R} )$ is finitely generated as an $\mathcal{O}_L[1/p]$-module and as it is a submodule of $L$, it is a fractional $\mathcal{O}_L[1/p]$-ideal. To see that $I_b$ is $\mathbb{Z}[1/p]$-integral if and only if $b \in \mathcal{R}$, we note that
\[
I_b = \iota(b) \cdot \iota(\mathcal{R})^{-1} \subset \mathcal{O}_L[1/p] \iff \iota(b) \in \iota(\mathcal{R}) \mathcal{O}_L[1/p] = \iota(\mathcal{R}) \iff b \in \mathcal{R}.
\]
Finally, to see the independence of $I_b$ from $\iota$, we note that any two isomorphisms of 1-dimensional $L$-vector spaces agree up to a scalar, so any other $\iota'$ can be written as $\lambda \cdot \iota$ for some $\lambda \in L^{\times}$. Then indeed
\[
\iota'(b) \iota'( \mathcal{R} )^{-1} = \lambda \iota(b) \cdot \lambda^{-1} \iota( \mathcal{R} )^{-1} = \iota( b ) \iota( \mathcal{R} ),
\]
and the proof is complete.
\end{proof}

We wish to compute the norm to $F$ of the ideal $I_b$ for any $b \in B$.

\begin{proposition}\label{detFatell2}
Let $\ell \neq p,q$ be a prime number. Then for any $b \in B$,
\[
\emph{Nm}^L_F( I_b ) \otimes \mathbb{Z}_{\ell} = ( q_F(b) \mathcal{D}_{F} ) \otimes \mathbb{Z}_{\ell}.
\]
\end{proposition}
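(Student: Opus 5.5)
Fix $\ell \neq p,q$ and work entirely with the localisations $B_\ell = B \otimes \mathbb{Q}_\ell$, $\mathcal{R}_\ell = \mathcal{R} \otimes \mathbb{Z}_\ell$, $L_\ell = L \otimes \mathbb{Q}_\ell$, $F_\ell = F \otimes \mathbb{Q}_\ell$. Since $\ell \neq q$, the local order $\mathcal{R}_\ell$ is a maximal order of $B_\ell \cong M_2(\mathbb{Q}_\ell)$. By Remark \ref{isoL} we have $q_F(b) = \beta\, \text{Nm}^L_F(\iota(b))$ for some $\beta \in F^\times$ depending on $\iota$. Hence
\[
\text{Nm}^L_F(I_b) = \text{Nm}^L_F(\iota(b)) \, \text{Nm}^L_F(\iota(\mathcal{R}))^{-1} = q_F(b)\, \beta^{-1}\, \text{Nm}^L_F(\iota(\mathcal{R}))^{-1}.
\]
The statement therefore reduces to the single $b$-independent identity
\[
\text{Nm}^L_F(\iota(\mathcal{R}))_\ell = (\beta^{-1}\mathcal{D}_F^{-1})_\ell,
\]
that is, that the lattice $\iota(\mathcal{R})_\ell$ has $F$-norm ideal equal to the inverse different, rescaled by $\beta^{-1}$. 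Equivalently, in terms of $q_F$ itself: the fractional $\mathcal{O}_{F,\ell}$-ideal generated by $\{ q_F(b) \cdot \text{Nm}^L_F(\mathfrak{a}) : \ldots \}$ should be characterised by the fact that $\mathcal{R}_\ell$ is a free rank-one $\mathcal{O}_{L,\ell}$-module $\mathcal{O}_{L,\ell} * b_0$. Here $\mathcal{O}_{L,\ell}$ is semilocal, so it has trivial Picard group, and $\iota(\mathcal{R})_\ell$ is locally principal. We then need to show $q_F(b_0)\mathcal{O}_{F,\ell} = \mathcal{D}_{F,\ell}^{-1}$.

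To pin down $q_F(b_0)$, I will use the trace condition $\text{tr}\circ q_F = \text{Nm}$ together with self-duality of $\mathcal{R}_\ell$. Since $\mathcal{R}_\ell$ is maximal in $M_2(\mathbb{Q}_\ell)$, the reduced-norm bilinear form $\langle x,y\rangle = \text{Nm}(x+y)-\text{Nm}(x)-\text{Nm}(y)$ is unimodular on $\mathcal{R}_\ell$ (the discriminant of a maximal order is $1$ away from ramified primes). On the other hand, via $\mathcal{R}_\ell = \mathcal{O}_{L,\ell} * b_0 \cong \mathcal{O}_{L,\ell}$, this bilinear form becomes
\[
(x,y) \mapsto \text{tr}_{F/\mathbb{Q}}\Big( q_F(b_0)\, \text{tr}_{L/F}(x \bar y) \Big).
\]
Compute its dual lattice. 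The dual of $\mathcal{O}_{L,\ell}$ under $\text{tr}_{L/\mathbb{Q}}(c\,x\bar y)$ is $c^{-1}\mathcal{D}_{L,\ell}^{-1}$. Since $L/F$ is unramified at all finite places (as $\gcd(D_1,D_2)=1$), we have $\mathcal{D}_L = \mathcal{D}_F\mathcal{O}_L$. Unimodularity then forces $q_F(b_0)\mathcal{D}_F\mathcal{O}_{L,\ell} = \mathcal{O}_{L,\ell}$, hence $q_F(b_0)\mathcal{O}_{F,\ell} = \mathcal{D}_{F,\ell}^{-1}$.

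Finally, for general $b = z*b_0$ with $z = \iota(b)/\iota(b_0)$, we get $I_b = z\mathcal{O}_{L,\ell}$ locally, and $q_F(b) = \text{Nm}^L_F(z)\,q_F(b_0)$ by $F$-quadraticity (Remark \ref{isoL}). This gives $\text{Nm}^L_F(I_b)_\ell = q_F(b)\mathcal{D}_{F,\ell}$, as claimed. The independence from $\iota$ is already Lemma \ref{iotaindep2}.

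The main obstacle is the duality step. One has to check carefully that the bilinear form attached to $\text{Nm}$ on $\mathcal{R}_\ell$ really is unimodular; at $\ell = 2$ one must use the correct normalisation, namely the even lattice with discriminant $1$. One also has to check that the identification of its dual with $\mathcal{D}_L^{-1}$ uses $\text{tr}_{L/\mathbb{Q}}$ with the right conjugation twist. I would first verify this on the model $\mathcal{R}_\ell \cong M_2(\mathbb{Z}_\ell)$ with the determinant form, whose polarisation is a hyperbolic plane squared and hence unimodular. I would then use unramifiedness of $L/F$ at $\ell$ to move between $\mathcal{D}_L$ and $\mathcal{D}_F$.
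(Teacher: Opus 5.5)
Your proof is correct, but it follows a different route from the paper's. The paper settles this in two lines. For $\ell \neq p$ the completion $\mathcal{R}\otimes\mathbb{Z}_\ell$ is just a maximal $\mathbb{Z}_\ell$-order, so the statement is purely local and is imported from Lemma 2.16 of Howard--Yang. That lemma is phrased for the lattice $\text{Hom}(\textbf{E}_1,\textbf{E}_2)$ with its CM-degree form.

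You instead prove the local identity directly:
\begin{itemize}
\item $\mathcal{R}_\ell$ is a fractional ideal of the semilocal Dedekind ring $\mathcal{O}_{L,\ell}$, hence $\mathcal{R}_\ell=\mathcal{O}_{L,\ell}*b_0$.
\item Transporting the polarised norm form gives $(x,y)\mapsto\text{tr}_{L/\mathbb{Q}}(q_F(b_0)\,x\bar y)$. Its dual lattice is $q_F(b_0)^{-1}\mathcal{D}_{L,\ell}^{-1}$, since both $q_F(b_0)\in F_\ell$ and $\mathcal{D}_L$ are fixed by $\text{Gal}(L/F)$.
\item The form $\text{trd}(x\bar y)$ is unimodular on a maximal order of $M_2(\mathbb{Q}_\ell)$, with Gram determinant $1$ also at $\ell=2$. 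Together with $\mathcal{D}_L=\mathcal{D}_F\mathcal{O}_L$, this forces $q_F(b_0)\mathcal{O}_{F,\ell}=\mathcal{D}_{F,\ell}^{-1}$.
\item The identity $q_F(z*b_0)=\text{Nm}^L_F(z)\,q_F(b_0)$ then finishes the argument.
\end{itemize}

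The citation buys brevity and treats the $\ell$-adic and $q$-adic statements uniformly through one reference. Your argument buys a self-contained, purely quaternionic proof. It uses nothing beyond $\text{tr}\circ q_F=\text{Nm}$ and needs no dictionary between Howard--Yang's lattices of homomorphisms between elliptic curves and the order $\mathcal{R}$. It also makes visible exactly where $\ell\neq q$ enters (unimodularity) and where $\gcd(D_1,D_2)=1$ enters (the relative different is trivial).

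For a clean write-up:
\begin{itemize}
\item Drop the detour through $\beta$ and the unfinished ``Equivalently'' sentence.
\item Note that $b_0$ lives in $B_\ell$, with $q_F$ and $\iota$ extended $\mathbb{Q}_\ell$-linearly, or approximate it by an element of $B$.
\item Justify the passage from $q_F(b_0)\mathcal{D}_F\mathcal{O}_{L,\ell}=\mathcal{O}_{L,\ell}$ to $q_F(b_0)\mathcal{D}_{F,\ell}=\mathcal{O}_{F,\ell}$ by injectivity of extension of ideals (or by taking norms).
\item Treat $b=0$ separately as the trivial case.
\end{itemize}
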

\begin{proof}
For $\ell \neq p$, after completing at $\ell$, there is no difference between maximal $\mathbb{Z}$-orders and maximal $\mathbb{Z}[1/p]$-orders. One may now appeal to Lemma 2.16 in \cite{HY}.
\end{proof}

As in \cite{HY}, we define the \emph{reflex ideal} of $\mathcal{O}_F$ above the rational prime $q$ associated with the pair of embeddings $( \alpha_1, \alpha_2 )$; see also Subsection 42.4.6 in \cite{voight}. The ring morphisms
\[
\alpha_i : \mathcal{O}_i \xhookrightarrow{} \mathcal{R} \to \mathcal{R} / [\mathcal{R}, \mathcal{R}] \cong \mathbb{F}_{q^2}
\]
for $i \in \{ 1,2 \}$ give rise to a map $\mathcal{O}_L = \mathcal{O}_1 \otimes_{\mathbb{Z}} \mathcal{O}_2 \to \mathbb{F}_{q^2}$. We let the reflex ideal be the $\mathcal{O}_F$-ideal
\[
\mathfrak{q} \colonequals \mathcal{O}_F \cap \text{ker}\left( \mathcal{O}_L \to \mathbb{F}_{q^2} \right).
\]

\begin{proposition}\label{detFatq2}
Let $b \in B$. Then
\[
\emph{Nm}^L_F( I_b ) \otimes \mathbb{Z}_{q} = ( q_F(b) \mathfrak{q}^{-1} ) \otimes \mathbb{Z}_{q}.
\]
\end{proposition}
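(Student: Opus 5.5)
The plan is to mirror the proof of Proposition \ref{detFatell2}, reducing to the integral local statement at $q$ in \cite{HY}. Since $q \neq p$, after tensoring with $\mathbb{Z}_q$ there is no difference between the maximal $\mathbb{Z}[1/p]$-order $\mathcal{R}$ and a maximal $\mathbb{Z}$-order. Concretely, $\mathcal{R} \otimes \mathbb{Z}_q$ is the unique maximal order $R_q$ of the division algebra $B_q = B \otimes \mathbb{Q}_q$. Likewise $\iota(\mathcal{R}) \otimes \mathbb{Z}_q$ is a fractional $\mathcal{O}_L \otimes \mathbb{Z}_q$-ideal. Both sides of the claimed identity are therefore determined by local data at $q$: the $L_q$-module $B_q$, its lattice $R_q$, the local form $q_F$ on $B_q$, and the reflex prime $\mathfrak{q}$, which is visibly defined through $R_q/[R_q,R_q]\cong\mathbb{F}_{q^2}$. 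At this point I would invoke the analogue of Lemma 2.16 in \cite{HY} at the ramified prime, presumably Lemma 2.17 or the corresponding computation there, which gives exactly $\text{Nm}^L_F(I_b)_q = q_F(b)\mathfrak{q}^{-1}$. Note that $\mathcal{D}_F$ is prime to $q$ because $q$ is inert in $K_1$ and $K_2$, hence unramified in $F$, so no different appears.

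If I cannot appeal to \cite{HY} directly, I would prove it by hand as follows. Since $q$ is inert in $K_1$ and $K_2$, it splits in $F$ as $\mathfrak{q}\mathfrak{q}'$, and each of these primes is inert in $L$. Hence $L_q = L_{\mathfrak{Q}} \times L_{\mathfrak{Q}'}$ with both factors isomorphic to $\mathbb{Q}_{q^4}$, and $\mathcal{O}_L\otimes\mathbb{Z}_q$ is a product of DVRs. Every fractional ideal is then of the form $(q^a,q^{a'})$, and it suffices to compare valuations at $\mathfrak{q}$ and $\mathfrak{q}'$ separately. The idempotents $e,e'$ of $L_q$ decompose $B_q = eB_q \oplus e'B_q$ as $(K_{2,q},K_{1,q})$-bimodules. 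Write $K_{1,q}\cong\mathbb{Q}_{q^2}$ embedded in $B_q$ via $\alpha_1$, and $B_q = K_{1,q}\oplus K_{1,q}\Pi$ with $\Pi$ a uniformiser normalising $K_{1,q}$. After conjugation, $\alpha_2(K_{2,q})=\alpha_1(K_{1,q})$ inside $B_q$, with the two fields identified either via the identity or via Frobenius. The two summands are then $K_{1,q}$ and $K_{1,q}\Pi$. Which summand corresponds to $\mathfrak{q}$ is determined by the reflex condition: on $R_q/\Pi R_q=\mathbb{F}_{q^2}$, the two actions agree on the summand where the induced map $\mathcal{O}_L\to\mathbb{F}_{q^2}$ kills $\mathfrak{q}$. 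This should be the unit summand $K_{1,q}$, while the $\Pi$-summand has valuation offset one.

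Finally, I would compute both sides for $b = u + v\Pi$. We have $R_q = \mathcal{O}_{q^2}\oplus\mathcal{O}_{q^2}\Pi$, so $\iota(R_q)$ is generated componentwise by units, and $I_b$ has components of valuation $v_q(u)$ and $v_q(v)$. The norm from $L_{\mathfrak{Q}}$ to $F_{\mathfrak{q}}$ doubles valuations in $q^2$-normalisation. Meanwhile $q_F(b)$ has components $\text{Nm}(u)$ and $\text{Nm}(v)\text{Nm}(\Pi) = q\,\text{Nm}(v)$ up to units, since $\text{tr}\,q_F = \text{Nm}$ and the summands are $q_F$-orthogonal. Comparing, the extra factor $q$ sits exactly at the prime paired with $\Pi$. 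This is accounted for by $\mathfrak{q}^{-1}$, provided the reflex labelling matches. I expect this labelling to be the main obstacle: one must check that the summand on which the two induced maps to $\mathbb{F}_{q^2}$ agree is the one where $q_F$ has the extra valuation, consistently with the kernel-based definition of $\mathfrak{q}$. The valuation bookkeeping itself should be routine.
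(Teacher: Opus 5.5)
Your primary route, namely localising at $q$, noting that $\mathcal{R}\otimes\mathbb{Z}_q$ is the maximal order $R_q$ of $B_q$ because $q\neq p$, and importing the Howard--Yang computation at the ramified prime, is exactly the paper's proof. The paper cites Lemma 2.22 of \cite{HY}, not Lemma 2.16 or 2.17. Your hand-computation fallback, however, has a genuine gap, and the labelling step you single out is where it fails.

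Write $\pi\colon R_q\to R_q/\Pi R_q\cong\mathbb{F}_{q^2}$ and $\rho\colon\mathcal{O}_L\to\mathbb{F}_{q^2}$, $x\otimes y\mapsto\overline{\alpha_1(x)}\,\overline{\alpha_2(y)}$, so that $\mathfrak{q}=\mathcal{O}_F\cap\ker\rho$. Because $\mathbb{F}_{q^2}$ is commutative, $\pi(\lambda*r)=\rho(\lambda)\pi(r)$. Let $e,e'\in F_q$ be the idempotents for $\mathfrak{q},\mathfrak{q}'$. Then $\rho(e')=0$, hence $e'*R_q\subset\Pi R_q$. So the unit summand is the $\mathfrak{q}$-part, as you guessed.

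Your formula $q_F(u+v\Pi)=(\mathrm{Nm}(u),\mathrm{Nm}(v)\mathrm{Nm}(\Pi))$ is correct, but it then places the extra factor of $q$ at $\mathfrak{q}'$. What your computation actually proves is $\mathrm{Nm}^L_F(I_b)\otimes\mathbb{Z}_q=(q_F(b)(\mathfrak{q}')^{-1})\otimes\mathbb{Z}_q$. Your two claims, that $\mathfrak{q}$ is the unit summand and that the extra $q$ is absorbed by $\mathfrak{q}^{-1}$, cannot both hold.

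No relabelling rescues the version with $\mathfrak{q}$ if the reflex ideal is read verbatim. Take $b=1$. The ideal $I_1$ is integral. On the other hand, $q_F(1)=q_F(e*1)+q_F(e'*1)$ has $\mathfrak{q}$-component $\mathrm{Nm}(e*1)=1-\mathrm{Nm}(e'*1)\equiv 1 \bmod q$, which is a unit. So the stated identity would force $v_{\mathfrak{q}}(\mathrm{Nm}^L_F(I_1))=-1$, a contradiction.

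The identity holds as stated when the reflex prime is taken to be the kernel of the Frobenius-twisted character $x\otimes y\mapsto\overline{\alpha_1(x)}^{\,q}\,\overline{\alpha_2(y)}$. This is the character through which $\mathcal{O}_L$ acts on $\Pi R_q/qR_q$. It is the normalisation under which the Howard--Yang lemma yields $\mathfrak{q}^{-1}$. A self-contained proof therefore has to fix this convention explicitly rather than take the kernel definition verbatim.

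Two smaller corrections:
\begin{itemize}
\item $L_{\mathfrak{Q}}\cong\mathbb{Q}_{q^2}$, not $\mathbb{Q}_{q^4}$. The prime $\mathfrak{q}$ has degree one over $q$ and is inert in $L$, and your decomposition of $B_q$ into two $2$-dimensional summands requires this.
\item ``After conjugation'' must mean conjugating $\alpha_1$ and $\alpha_2$ \emph{independently} by elements $\xi_1,\xi_2\in R_q^\times$; a simultaneous conjugation preserves the relative position of the two images. Independent conjugation is harmless: it replaces $b$ by $\xi_2b\xi_1$ on both sides up to the unit $\mathrm{Nm}(\xi_1\xi_2)$ (Lemma \ref{piciota}, Proposition \ref{picdetf}), fixes $R_q$, and leaves $\rho$ unchanged. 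The idempotent argument above avoids this normalisation entirely.
\end{itemize}
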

\begin{proof}
Again, as $q \neq p$, we may appeal to Lemma 2.22 in \cite{HY}.
\end{proof}

These two results combine to prove the following key result.

\begin{corollary}\label{idealnorm2}
As $\mathcal{O}_F[1/p]$-ideals, for any $b \in B$, the ideal $I_b$ satisfies
\[
\emph{Nm}^L_F(I_b) = q_F(b) \mathcal{D}_F \mathfrak{q}^{-1}.
\]
\end{corollary}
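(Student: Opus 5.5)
The plan is a local-global argument. Both sides are fractional ideals of the Dedekind domain $\mathcal{O}_F[1/p]$: on the left, $\text{Nm}^L_F(I_b)$ with $I_b$ a fractional $\mathcal{O}_L[1/p]$-ideal by Lemma \ref{iotaindep2}; on the right, $q_F(b)\mathcal{D}_F\mathfrak{q}^{-1}$ read in $\mathcal{O}_F[1/p]$. We may assume $b \neq 0$, since for $b=0$ both sides vanish. A fractional ideal of $\mathcal{O}_F[1/p]$ is determined by its localisations, equivalently its completions, at the maximal ideals of $\mathcal{O}_F[1/p]$. These are exactly the primes of $\mathcal{O}_F$ above rational primes $\ell \neq p$. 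It therefore suffices to compare both sides after tensoring with $\mathbb{Z}_\ell$ for every prime $\ell \neq p$. Doing this rational-prime by rational-prime is harmless, because tensoring with $\mathbb{Z}_\ell$ splits an ideal into its components at all primes of $F$ above $\ell$ simultaneously.

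The key steps, in order:
\begin{enumerate}
\item For $\ell \neq p,q$, Proposition \ref{detFatell2} gives $\text{Nm}^L_F(I_b)\otimes\mathbb{Z}_\ell = (q_F(b)\mathcal{D}_F)\otimes\mathbb{Z}_\ell$. Since $\mathfrak{q}$ lies above $q \neq \ell$, we have $\mathfrak{q}^{-1}\otimes \mathbb{Z}_\ell = \mathcal{O}_F\otimes\mathbb{Z}_\ell$, so this agrees with the right-hand side at $\ell$.
\item For $\ell = q$, Proposition \ref{detFatq2} gives $(q_F(b)\mathfrak{q}^{-1})\otimes\mathbb{Z}_q$. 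To match the right-hand side, one must check that $\mathcal{D}_F$ is trivial at $q$. This holds because $q$ is inert in both $K_1$ and $K_2$, hence $q \nmid D_1D_2 = D$, so $q$ is unramified in $F$ and $\mathcal{D}_F\otimes\mathbb{Z}_q = \mathcal{O}_F\otimes\mathbb{Z}_q$.
\item The prime $p$ is inverted on both sides, so no comparison is needed there.
\end{enumerate}

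The main point of care is the bookkeeping in the final identification rather than any real obstacle. One should confirm that $\mathfrak{q}$, defined as an $\mathcal{O}_F$-ideal, is extended to $\mathcal{O}_F[1/p]$; this is harmless since $\mathfrak{q}$ is supported above $q \neq p$. Likewise $\mathcal{D}_F$ is extended to $\mathcal{O}_F[1/p]$, and its factors above $p$ (if any) are discarded, which is consistent on both sides. Having agreement at every maximal ideal of $\mathcal{O}_F[1/p]$, the two fractional ideals coincide, which is the statement of Corollary \ref{idealnorm2}.
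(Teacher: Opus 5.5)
Your proposal is correct and matches the paper's proof, which simply combines Propositions \ref{detFatell2} and \ref{detFatq2} and notes that the norm ideal of $\mathcal{O}_F[1/p]$ can be compared locally at every $\ell \neq p$. Your checks that $\mathfrak{q}$ is trivial away from $q$, and that $\mathcal{D}_F$ is trivial at $q$ because $q$ is inert in both $K_i$ and hence $q \nmid D$, fill in bookkeeping the paper leaves implicit.
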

\begin{proof}
This follows from the above two results as the ideal norm may be computed locally.
\end{proof}

\begin{remark}\label{evenval}
The above shows for any unramified prime $\mathfrak{l} \nmid pq$ of $F$ which is inert in $L$, that the $\mathfrak{l}$-adic valuation of $q_F(b)$ must be even, as this is so for all ideals that are norms from $L$. Repeating this argument with any rational prime $\ell \neq p$ that satisfies the same hypotheses shows that this also holds for $\mathfrak{p}, \mathfrak{p}' \subset \mathcal{O}_F$.
\end{remark}

We next take into account the actions of the class groups on the embeddings. Given $c_1 \in \text{Pic}(K_1)$ and $c_2 \in \text{Pic}(K_2)$, we let $\iota[c_1,c_2] : B' \xrightarrow{\sim} L$ be an isomorphism of $L$-vector spaces, where $B' = B$ is equipped with the $L$-vector space structure induced by the embeddings $c_1 \cdot \alpha_1$ and $c_2 \cdot \alpha_2$. We will also consider
\[
I[c_1,c_2]_b \colonequals \iota[c_1,c_2](b) \cdot \iota[c_1,c_2]( \mathcal{R} )^{-1}.
\] 
We invite the reader to compare this next lemma and its proof to Proposition \ref{picdetf}.
\begin{lemma}\label{piciota}
Let $\xi_1, \xi_2 \in B^{\times}$ and let $B'$ denote the $L$-vector space with underlying set $B$ but with $L$-vector space structure induced by the embeddings $\xi_1 \alpha_1 \xi_1^{-1}$ and $\xi_2^{-1} \alpha_2 \xi_2$. Then there is an isomorphism of $L$-vector spaces
\[
\iota' : B' \to L : b \mapsto \iota( \xi_2 b \xi_1 ).
\]
\end{lemma}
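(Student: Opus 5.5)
The map $\iota'$ is a composition of two maps, so the plan is to check separately that it is bijective and that it is $L$-linear for the structure on $B'$. Bijectivity is immediate. The map $b \mapsto \xi_2 b \xi_1$ is a $\mathbb{Q}$-linear automorphism of $B$, with inverse $b \mapsto \xi_2^{-1} b \xi_1^{-1}$, since $\xi_1, \xi_2 \in B^{\times}$. Composing it with the bijection $\iota$ gives a bijection $B' \to L$, and additivity is also clear.

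For $L$-linearity, I denote the $L$-action on $B'$ by $\star$ and the one on $B$ by $*$, as in the proof of Proposition \ref{picdetf}. Because both actions were extended $\mathbb{Q}$-linearly from pure tensors $xy$ with $x \in K_1$, $y \in K_2$, and $L = K_1 K_2$ is spanned over $\mathbb{Q}$ by such products, it suffices to check $\iota'((xy)\star b) = xy \cdot \iota'(b)$ for these generators.

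Unwinding the definition for $x \in K_1$ and $y \in K_2$ gives
\[
\xi_2 \big( (xy) \star b \big) \xi_1 = \xi_2 \xi_2^{-1} \alpha_2(y) \xi_2 \, b \, \xi_1 \alpha_1(x) \xi_1^{-1} \xi_1 = \alpha_2(y) (\xi_2 b \xi_1) \alpha_1(x) = (xy) * (\xi_2 b \xi_1).
\]
Applying $\iota$, which is $L$-linear for $*$, yields $\iota'((xy) \star b) = xy \cdot \iota(\xi_2 b \xi_1) = xy \cdot \iota'(b)$. Linearity for general $z \in L$ then follows by $\mathbb{Q}$-linearity of both sides in $z$.

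None of these steps is a real obstacle. The one point needing care is that the $\star$-action is really the $\mathbb{Q}$-linear extension of the rule on pure tensors, so that checking generators suffices. This holds by the construction of the $L$-module structure given before Lemma \ref{qFexist}, applied to the conjugated embeddings.
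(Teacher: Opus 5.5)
Your proposal is correct and follows essentially the same route as the paper: bijectivity is clear, and $L$-linearity is checked on products $xy$ with $x \in K_1$, $y \in K_2$ via the identity $\xi_2 \big( (xy) \star b \big) \xi_1 = (xy) * (\xi_2 b \xi_1)$, followed by $L$-linearity of $\iota$. Your added remark that these products span $L$ over $\mathbb{Q}$, so checking them suffices, is a detail the paper leaves implicit.
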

\begin{proof}
Clearly $\iota'$ is a $\mathbb{Q}$-linear bijective map of sets, so it suffices to verify its $L$-linearity. Let $*$ denote the $L$-action on $B$ and $\star$ the $L$-action on $B'$. Then for $x \in K_1$ and $y \in K_2$, we compute that
\begin{align*}
\iota'\left( (xy) \star b \right) &= \iota'\left( \xi_2^{-1} \alpha_2(y) \xi_2 b \xi_1 \alpha_1(x) \xi_1^{-1} \right) = \iota\left( \alpha_2(y) \xi_2 b \xi_1 \alpha_1(x) \right) \\
&= \iota\left( (xy) * ( \xi_2 b \xi_1 ) \right) = xy \cdot \iota\left( \xi_2 b \xi_1 \right) = xy \cdot \iota'(b),
\end{align*}
and the proof is complete.
\end{proof}

\begin{proposition}\label{picact2}
Let $c_1 \in \emph{Pic}(K_1)$ and $c_2 \in \emph{Pic}(K_2)$. Then the ideal class of $I[c_1,c_2]_b$ inside of $\emph{Pic}(\mathcal{O}_L[1/p])$ is given by $[I_b] - c_1 - c_2$, where we regard $c_1,c_2 \in \emph{Pic}(\mathcal{O}_L[1/p])$ through the extension map \emph{ext}. 
\end{proposition}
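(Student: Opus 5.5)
Choose integral representatives $J_1 \subset \mathcal{O}_1$ and $J_2 \subset \mathcal{O}_2$ with $c_1 = [J_1]$ and $c_2 = [J_2]$. Following the construction of the class group action, pick $\xi_1, \xi_2 \in \mathcal{R}$ with $\mathcal{R}\alpha_1(J_1) = \mathcal{R}\xi_1$ and $\alpha_2(J_2)\mathcal{R} = \xi_2\mathcal{R}$. Then $c_1 \cdot \alpha_1 = \xi_1\alpha_1(-)\xi_1^{-1}$ and $c_2\cdot\alpha_2 = \xi_2^{-1}\alpha_2(-)\xi_2$ represent the classes in question. By Lemma \ref{iotaindep2}, the ideal $I[c_1,c_2]_b$ does not depend on the choice of $\iota[c_1,c_2]$. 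We may therefore take the isomorphism $\iota' : b \mapsto \iota(\xi_2 b \xi_1)$ provided by Lemma \ref{piciota}. We should also check that changing the representatives within their $\Gamma$-conjugacy classes does not alter the ideal class. Replacing $\xi_i$ by $\gamma_i\xi_i$ or $\xi_i\gamma_i$ with $\gamma_i \in \Gamma \subset \mathcal{R}^\times$ leaves $\xi_2\mathcal{R}\xi_1$ unchanged. It replaces $\xi_2 b\xi_1$ by an element whose image differs by an $\mathcal{O}_L[1/p]$-unit multiple, via the $\mathcal{R}^\times$-stability of the relevant modules; this is routine.

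With this choice we have
\[
I[c_1,c_2]_b = \iota(\xi_2 b \xi_1)\cdot \iota(\xi_2\mathcal{R}\xi_1)^{-1}.
\]
The key identification is
\[
\xi_2 \mathcal{R} \xi_1 = \alpha_2(J_2)\mathcal{R}\alpha_1(J_1) = (J_1J_2\mathcal{O}_L[1/p]) * \mathcal{R},
\]
where the second equality is just the definition of the $L$-action by $x_1x_2 * r = \alpha_2(x_2) r \alpha_1(x_1)$, extended to ideals. For the first equality, note that $\xi_2\mathcal{R}\xi_1 = \xi_2(\mathcal{R}\xi_1) = \xi_2\mathcal{R}\alpha_1(J_1)$, using $\mathcal{R}\xi_1=\mathcal{R}\alpha_1(J_1)$, and similarly $\xi_2 \mathcal{R} = \alpha_2(J_2)\mathcal{R}$. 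Since $\iota$ is $L$-linear, this gives
\[
\iota(\xi_2\mathcal{R}\xi_1) = J_1J_2\cdot \iota(\mathcal{R}).
\]

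The element $\xi_2 b\xi_1$ is handled by writing $I_{\xi_2 b \xi_1} = \iota(\xi_2 b\xi_1)\iota(\mathcal{R})^{-1}$. We then have
\[
I[c_1,c_2]_b = I_{\xi_2 b\xi_1}\cdot (J_1J_2)^{-1}.
\]
Hence the class of $I[c_1,c_2]_b$ in $\text{Pic}(\mathcal{O}_L[1/p])$ equals $[I_{\xi_2 b \xi_1}] - c_1 - c_2$. It remains to compare $[I_{\xi_2b\xi_1}]$ with $[I_b]$, and this is the main obstacle. The statement literally asserts $[I_b]$, which suggests either the intended $b$ in the new structure corresponds to $\xi_2 b\xi_1$, or that $I_b$ here has $\iota$ implicitly transported. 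I would try first to read $[I_b]$ as the class attached to the same element of $L$ under the two identifications: one has $\iota'(b) = \iota(\xi_2b\xi_1)$, so the numerator is literally the same element. Only the lattice changes, from $\iota(\mathcal{R})$ to $J_1J_2\iota(\mathcal{R})$, and the proposition then reads off directly.

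If the statement is instead meant with $I_b$ computed for the same $b$ in the original structure, I would argue as follows. The class of $I_b$ equals $[\iota(b)] - [\iota(\mathcal{R})] = -[\iota(\mathcal{R})]$, since principal ideals are trivial. The same holds for $I_{\xi_2 b\xi_1}$. Thus all nonzero $b$ give the same class, and $[I_{\xi_2 b \xi_1}] = [I_b]$ whenever both elements are nonzero. This is the cleanest route and settles the proposition in either reading: the class $[I_b] = -[\iota(\mathcal{R})]$ is independent of $b \neq 0$. Combined with $\iota'(\mathcal{R}) = J_1J_2\iota(\mathcal{R})$, this gives $[I[c_1,c_2]_b] = -[\iota(\mathcal{R})] - c_1 - c_2 = [I_b] - c_1 - c_2$.
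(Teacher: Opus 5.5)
Your proof is correct and is essentially the paper's: with $\xi_1,\xi_2$ as in the construction of the action, Lemma \ref{piciota} gives $\iota[c_1,c_2](\mathcal{R}) = \iota(\xi_2\mathcal{R}\xi_1) = \iota(\alpha_2(J_2)\mathcal{R}\alpha_1(J_1)) = (J_1J_2)\cdot\iota(\mathcal{R})$, and the principal factors $\iota(b)$ and $\iota(\xi_2 b\xi_1)$ do not affect the classes, which is exactly your second reading. One aside, which you do not need, is wrong: replacing $\xi_1$ by $\gamma_1\xi_1$ does not in general change $\iota(\xi_2 b\xi_1)$ by an $\mathcal{O}_L[1/p]$-unit; this is harmless because every representative of the relevant $\Gamma$-conjugacy classes comes from some admissible choice of $\xi_1,\xi_2$, and your computation holds for any such choice.
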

\begin{proof}
Let $J_1 \subset \mathcal{O}_1$ and $J_2 \subset \mathcal{O}_2$ be two ideals representing the classes $c_1$ and $c_2$ respectively and let $\xi_1, \xi_2 \in \mathcal{R}$ be such that $\mathcal{R} \alpha_1( J_1 ) = \mathcal{R} \xi_1$ and $\alpha_2( J_2 ) \mathcal{R} = \xi_2 \mathcal{R}$, so that $J_1 \cdot \alpha_1( - ) = \xi_1 \alpha_1( - ) \xi_1^{-1}$ and $J_2 \cdot \alpha_2( - ) = \xi_2^{-1} \alpha_2( - ) \xi_2$. By Lemma \ref{piciota} above, we have $\iota[c_1,c_2](b) = \iota(\xi_2 b \xi_1 )$ for all $b \in B$. We now compute that
\[
\iota[c_1,c_2]( \mathcal{R} ) = \iota( \xi_2 \mathcal{R} \xi_1 ) = \iota( \alpha_2( J_2 ) \mathcal{R} \alpha_1( J_1 ) ) = (J_1 \cdot J_2) \cdot \iota( \mathcal{R} ),
\]
and the proof is complete, as the scalars $\iota(b)$ and $\iota[c_1,c_2](b)$ will not change the classes of $I_b$ and $I[c_1,c_2]_b$.
\end{proof}

We are now ready to prove the main results that we have been working towards. Recall that
\[
\text{Cap}_p \colonequals \text{ker}\big( \text{ext} : \text{Pic}(K_1) \times \text{Pic}(K_2) \to \text{Pic}( \mathcal{O}_L[1/p] ) \big)
\]
is a subgroup of capitulating pairs of size 1, 2 or 4 as described by Proposition \ref{connectcard}. Finally, let 
\[
C_0 \colonequals [ \iota( \mathcal{R} )^{-1} ] \in \text{Pic}( \mathcal{O}_L[1/p] ).
\]
\begin{theorem}\label{refinedthm}
Let $\nu \in F^+$ be such that $\emph{ord}_{\mathfrak{p}}( \nu )$ and $\emph{ord}_{\mathfrak{p}'}( \nu )$ are both even. Then the association
\[
(b,c_1,c_2) \mapsto I[c_1,c_2]_b
\]
establishes a $\frac{w_1w_2}{2}$-to-1 mapping between the sets
\begin{align*}
&\{ (b,c_1,c_2) \in \mathcal{R} \times \emph{Cap}_p \mid q_F[c_1,c_2](b) = \nu \} \text{ and} \\
&\{ \mathcal{I} \subset \mathcal{O}_L[1/p] \mid \emph{Nm}^L_F( \mathcal{I} ) = ( \nu ) \mathcal{D}_F \mathfrak{q}^{-1} \text{ and } [\mathcal{I}] = C_0 \in \emph{Pic}( \mathcal{O}_L[1/p] ) \}.
\end{align*}
\end{theorem}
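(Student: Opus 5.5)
The plan is to treat the map in three stages: show it is well defined (lands in the target set), show it is surjective, and then count fibres using the $p$-inverted exact sequence of Theorem \ref{exactseq}. Throughout, for $(c_1,c_2)$ we fix representatives $J_1,J_2$ and elements $\xi_1,\xi_2\in\mathcal{R}$ as in Proposition \ref{picact2}. We choose the $J_i$ coprime to $q$ and, by Remark \ref{nmopm}, take $\text{Nm}(\xi_i)=\text{Nm}(J_i)$. Then $\iota[c_1,c_2]=\iota(\xi_2\,\cdot\,\xi_1)$ by Lemma \ref{piciota}.

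\emph{Well-definedness.} The conjugated embeddings $c_1\cdot\alpha_1$ and $c_2\cdot\alpha_2$ still land in $\mathcal{R}$ by Proposition \ref{picpf}. Hence Lemma \ref{iotaindep2} applies to them, and $I[c_1,c_2]_b$ is $\mathbb{Z}[1/p]$-integral, since $b\in\mathcal{R}$. Corollary \ref{idealnorm2}, applied to the conjugated embeddings, gives
\[
\text{Nm}^L_F(I[c_1,c_2]_b)=q_F[c_1,c_2](b)\,\mathcal{D}_F\mathfrak{q}_{c}^{-1},
\]
where $\mathfrak{q}_c$ is the reflex ideal of the conjugated pair. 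One checks $\mathfrak{q}_c=\mathfrak{q}$ as follows. Since $\text{Nm}(\xi_i)$ is prime to $q$, each $\xi_i$ is a unit in $\mathcal{R}\otimes\mathbb{Z}_q$. Conjugating by it therefore induces an automorphism of $\mathcal{R}/[\mathcal{R},\mathcal{R}]\cong\mathbb{F}_{q^2}$. This automorphism is inner in a field, hence trivial, so the kernel of $\mathcal{O}_L\to\mathbb{F}_{q^2}$ is unchanged. For the class, Proposition \ref{picact2} gives $[I[c_1,c_2]_b]=[I_b]-c_1-c_2$. Here $[I_b]=[\iota(b)\iota(\mathcal{R})^{-1}]=C_0$, and $c_1+c_2=0$ in $\text{Pic}(\mathcal{O}_L[1/p])$ because $(c_1,c_2)\in\text{Cap}_p$. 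Since $\nu\neq0$, we have $b\neq0$ and the ideal is nonzero.

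\emph{Surjectivity.} Let $\mathcal{I}$ be in the target set. Since $[\mathcal{I}]=C_0$, we can write $\mathcal{I}=\lambda\,\iota(\mathcal{R})^{-1}$ with $\lambda\in L^\times$, unique up to $\mathcal{O}_L[1/p]^\times$. Put $b_0=\iota^{-1}(\lambda)$; then $b_0\in\mathcal{R}$ by Lemma \ref{iotaindep2}. Corollary \ref{idealnorm2} gives $(q_F(b_0))=(\nu)$ as $\mathcal{O}_F[1/p]$-ideals, so $q_F(b_0)=u^{-1}\nu$ with $u\in\mathcal{O}_F[1/p]^{\times,+}$. By Remark \ref{evenval} and the parity hypothesis on $\nu$, both $\text{ord}_{\mathfrak{p}}(u)$ and $\text{ord}_{\mathfrak{p}'}(u)$ are even, so $u\in\mathcal{O}_F[1/p]^*$. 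By Proposition \ref{connectcard}, the cokernel of $\text{Nm}^L_F$ is represented by products of $1,\epsilon_F,\beta$ corresponding to the elements of $\text{Cap}_p$. We choose $w\in\mathcal{O}_L[1/p]^\times$ with $u\,\text{Nm}^L_F(w)^{-1}$ equal to one of these representatives. We then replace $b_0$ by $w^{-1}*b_0$, which lies in $\mathcal{R}$ because $\mathcal{R}$ is an $\mathcal{O}_L[1/p]$-module. Its $q_F$-value changes by $\text{Nm}^L_F(w)^{-1}$ (Remark \ref{isoL}). Now Propositions \ref{idealambig2} and \ref{idealambig3} do the remaining work. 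Their proofs give an explicit $b\in\mathcal{R}$ with $q_F[c_1,c_2](b)=\nu$ for the corresponding pair $(c_1,c_2)=\varphi(\cdot)\in\text{Cap}_p$. Concretely, $b'=(y\beta)^{-1}*(\xi_2b\xi_1)$ or $b'=y_F^{-1}*(\xi_2b\xi_1)$, and this $b$ satisfies $\iota[c_1,c_2](b)=\iota(\xi_2b\xi_1)\in\lambda\,\mathcal{O}_L[1/p]^\times$. Using $\iota[c_1,c_2](\mathcal{R})=J_1J_2\iota(\mathcal{R})$ together with Lemma \ref{actequal}, one obtains $I[c_1,c_2]_b=\mathcal{I}$.

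\emph{Fibres.} Suppose $(b,c_1,c_2)$ and $(\tilde b,\tilde c_1,\tilde c_2)$ both map to $\mathcal{I}$. Unwinding as above, the $q_F$-values of the corresponding elements differ by $\text{Nm}^L_F$ of a unit times the cokernel representatives attached to the two pairs. Since both values equal $\nu$, injectivity of the connecting map $\varphi$ on the cokernel (Theorem \ref{exactseq}) forces $(c_1,c_2)=(\tilde c_1,\tilde c_2)$. For a fixed pair, $\tilde b=w*b$ with $w\in\mathcal{O}_L[1/p]^\times$ and $\text{Nm}^L_F(w)=1$. By Lemma \ref{exact12} this means $w\in\mathcal{O}_1^\times\mathcal{O}_2^\times$, a group of order $w_1w_2/2$ since $\mathcal{O}_1^\times\cap\mathcal{O}_2^\times=\{\pm1\}$. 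All of these $w$ give distinct $\tilde b$ with the same $q_F[c_1,c_2]$-value (Remark \ref{isoL}) and the same ideal. Hence the map is $\frac{w_1w_2}{2}$-to-$1$.

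The main obstacle is the bookkeeping in the surjectivity and fibre steps. One has to match, consistently, the normalisations of $\beta$ and $y_F$ used in the proofs of Propositions \ref{idealambig2} and \ref{idealambig3} with the element $\lambda$ describing $\mathcal{I}$, including the Galois-twist $\epsilon_F=y_F/\sigma(y_F)$. I expect the cleanest route is to work with the explicit formula $b'=(y\beta)^{-1}*(\xi_2b\xi_1)$ directly on the level of $\iota$ rather than passing through $q_F$. The secondary point requiring care is the invariance of the reflex ideal under the $\text{Pic}$-actions.
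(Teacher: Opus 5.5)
Your proposal is essentially the paper's proof. It uses the same steps: well-definedness via Lemma \ref{iotaindep2}, Corollary \ref{idealnorm2} and Proposition \ref{picact2}; writing $\mathcal{I}=I_{b_0}$ and correcting $q_F(b_0)$ by norms of $p$-units and by the pairs in $\text{Cap}_p$ via Propositions \ref{idealambig2} and \ref{idealambig3}; and identifying each fibre with $\mathcal{O}_1^{\times}\mathcal{O}_2^{\times}$ via Lemma \ref{exact12}. Your checks that the reflex ideal is unchanged, and that the ideal itself (not merely its class) is preserved, make explicit what the paper leaves implicit.

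There is one bookkeeping slip. $\iota(\xi_2 b\xi_1)$ lies in $y\lambda\,\mathcal{O}_L[1/p]^{\times}$ (resp.\ $y_F\lambda\,\mathcal{O}_L[1/p]^{\times}$), not in $\lambda\,\mathcal{O}_L[1/p]^{\times}$. It is exactly this factor that cancels against $J_1J_2\mathcal{O}_L[1/p]=y\,\mathcal{O}_L[1/p]$ to give $I[c_1,c_2]_b=\mathcal{I}$. Similarly, you should replace $b_0$ by $w*b_0$ rather than $w^{-1}*b_0$.
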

\begin{proof}
First, for any triple $(b, c_1, c_2) \in \mathcal{R} \times \text{Pic}(K_1) \times \text{Pic}(K_2)$, Lemma \ref{iotaindep2} shows that $I[c_1,c_2]_b$ is an integral $\mathcal{O}_L[1/p]$-ideal. Proposition \ref{idealnorm2} shows that the $\mathcal{O}_F[1/p]$-ideal $\text{Nm}^L_F( I[c_1,c_2]_b )$ is equal to $q_F( b ) \mathcal{D}_F \mathfrak{q}^{-1}$. Finally, the class of $I[c_1,c_2]_b$ in $\text{Pic}(\mathcal{O}_L[1/p])$ is $C_0$ by Proposition \ref{picact2} and the definition of $\text{Cap}_p$. Combined, this shows that the mapping is well defined. We determine the cardinality of each pre-image.

To this end, let $\mathcal{I} \subset \mathcal{O}_L[1/p]$ be an ideal such that $\text{Nm}^L_F( \mathcal{I} ) = (\nu) \mathcal{D}_F \mathfrak{q}^{-1}$ and $[ \mathcal{I} ] = C_0 \in \text{Pic}( \mathcal{O}_L[1/p] )$. Then the class of $\mathcal{I} \cdot \iota( \mathcal{R} )$ is trivial in $\text{Pic}( \mathcal{O}_L[1/p] )$. As $\iota : B \to L$ is bijective, there exists some $b \in B$ such that $\mathcal{I} \cdot \iota( \mathcal{R} ) = (\iota(b))$, and therefore $\mathcal{I} = I_b$. As $\mathcal{I}$ is integral, it follows from Lemma \ref{iotaindep2} that $b \in \mathcal{R}$.

Using Proposition \ref{idealnorm2}, taking the norm to $F$ we find $(\nu) \mathcal{D}_F \mathfrak{q}^{-1} = q_F( b ) \mathcal{D}_F \mathfrak{q}^{-1}$ as $\mathcal{O}_F[1/p]$-ideals. Since $q_F(b)$ and $\nu$ are both totally positive, it follows that $q_F(b) = \epsilon \cdot \nu$ for some $\epsilon \in \mathcal{O}_F[1/p]^{\times, +}$. From Remark \ref{evenval} in combination with our assumption on $\nu$, it follows that $\epsilon \in \mathcal{O}_F[1/p]^*$. Recall from Remark \ref{isoL} that for $u \in L$, it holds that $q_F( u * b ) = \text{Nm}^L_F( u ) q_F(b)$. We proceed to distinguish no less than four cases.

\textbf{Case AA:} In this case, we can always find $u \in \mathcal{O}_L^{\times, +}$ such that $\text{Nm}^L_F( u ) = \epsilon$. It then follows that $q_F( u^{-1} * b ) = \text{Nm}^L_F( u )^{-1} q_F(b) = \epsilon^{-1} \epsilon \cdot \nu = \nu$, so we move on.

\textbf{Case AB:} Now $\mathcal{O}_F[1/p]^* / \text{Nm}^L_F( \mathcal{O}_L[1/p]^{\times} ) \cong \mathbb{Z} / 2 \mathbb{Z}$ by Proposition \ref{connectcard}, with coset representatives given by $1$ and $\beta$, which by Theorem \ref{exactseq} correspond to two different pairs of ideal classes in $\text{Cap}_p \cong \mathbb{Z} / 2 \mathbb{Z}$. By Proposition \ref{idealambig3}, if we act on our embeddings by the non-trivial class, all $q_F$-values will shift by a factor of $\beta$, whereas the ideal class of $I[c_1,c_2]_b$ in $\text{Pic}( \mathcal{O}_L[1/p] )$ does not change. It follows that there is a unique pair $(c_1,c_2) \in \text{Cap}_p$ for which we may act by a $p$-unit of $L$ to find some $b \in \mathcal{R}$ such that $q_F[c_1,c_2](b) = \nu$.

\textbf{Case BA:} Now again $\mathcal{O}_F[1/p]^* / \text{Nm}^L_F( \mathcal{O}_L[1/p]^{\times} ) \cong \mathbb{Z} / 2 \mathbb{Z}$ by Proposition \ref{connectcard}, but with coset representatives given by $1$ and $\epsilon_F$. The proof is then completed analogously, invoking Proposition \ref{idealambig2} instead.

\textbf{Case BB:} This is a combination of the two cases above.

We have now found $(b, c_1, c_2) \in \mathcal{R} \times \text{Cap}_p$ such that $q_F[c_1,c_2]( b ) = \nu$. The pair $(c_1,c_2) \in \text{Cap}_p$ is unique, and the ambiguity in our choice of $b \in \mathcal{R}$ is given by 
\[
\text{ker}\big( \text{Nm}^L_F : \mathcal{O}_L[1/p]^{\times} \to \mathcal{O}_F[1/p]^* \big) = \mathcal{O}_1^{\times} \mathcal{O}_2^{\times},
\]
and this ambiguity remains by Remark \ref{isoL}. Since $\# ( \mathcal{O}_1^{\times} \mathcal{O}_2^{\times} ) = w_1w_2 / 2$, this completes the proof.
\end{proof}

\begin{corollary}\label{refinedcount}
Let $\nu \in F^+$ be $p$-integral. Then
\[ 
\# \{ (b,c_1,c_2) \in \mathcal{R} \times \emph{Cap}_p \mid q_F[c_1,c_2](b) = \nu \}
\]
is equal to
\[
\frac{w_1w_2}{2} \# \{ J \subset \mathcal{O}_L \mid \emph{Nm}^L_F( J ) = ( \nu ) \mathcal{D}_F \mathfrak{q}^{-1} \text{ and } [ J \mathcal{O}_L[1/p] ] = C_0 \in \emph{Pic}( \mathcal{O}_L[1/p] ) \}.
\]
\end{corollary}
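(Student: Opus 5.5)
The plan is to deduce the corollary from Theorem \ref{refinedthm}. The two points to settle are that its parity hypothesis on $\nu$ is automatic or harmless, and that the $\mathcal{O}_L[1/p]$-ideals in its target set correspond bijectively to integral $\mathcal{O}_L$-ideals $J$ with the stated norm.

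\textbf{Parity at $\mathfrak{p},\mathfrak{p}'$.} Suppose $\text{ord}_{\mathfrak{p}}(\nu)$ or $\text{ord}_{\mathfrak{p}'}(\nu)$ is odd.
\begin{itemize}
\item By Remark \ref{evenval}, $\text{ord}_{\mathfrak{p}}(q_F[c_1,c_2](b))$ and $\text{ord}_{\mathfrak{p}'}(q_F[c_1,c_2](b))$ are even for every $b \in B$. This applies to any pair of embeddings, since the remark only uses Corollary \ref{idealnorm2}. Hence the left-hand set is empty.
\item On the right, $\mathfrak{p}$ is inert in $L/F$. So $\text{ord}_{\mathfrak{p}}(\text{Nm}^L_F(J))$ is even for every integral $J \subset \mathcal{O}_L$, and likewise at $\mathfrak{p}'$.
\item Neither $\mathcal{D}_F$ nor $\mathfrak{q}$ is divisible by $\mathfrak{p}$ or $\mathfrak{p}'$, because $p \nmid D$ and $p \neq q$. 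Thus $\text{ord}_{\mathfrak{p}}((\nu)\mathcal{D}_F\mathfrak{q}^{-1}) = \text{ord}_{\mathfrak{p}}(\nu)$, and the right-hand set is empty as well.
\end{itemize}
So both counts are zero, and we may assume both orders are even, which places us under the hypothesis of Theorem \ref{refinedthm}.

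\textbf{Transferring from $\mathcal{O}_L[1/p]$ to $\mathcal{O}_L$.} Theorem \ref{refinedthm} gives the left-hand count as $\frac{w_1w_2}{2}$ times the number of integral $\mathcal{O}_L[1/p]$-ideals $\mathcal{I}$ with $\text{Nm}^L_F(\mathcal{I}) = (\nu)\mathcal{D}_F\mathfrak{q}^{-1}$ as $\mathcal{O}_F[1/p]$-ideals and $[\mathcal{I}] = C_0$. Consider the map $J \mapsto J\mathcal{O}_L[1/p]$ from the right-hand set of the corollary to this set.
\begin{itemize}
\item It is well defined. The norm compatibility follows by localising, and the class condition is built into the definition.
\item It is a bijection with inverse $\mathcal{I} \mapsto J$, where $J$ is the unique integral $\mathcal{O}_L$-ideal with $J\mathcal{O}_L[1/p] = \mathcal{I}$ and $\text{ord}_{\mathfrak{P}}(J) = \frac12\text{ord}_{\mathfrak{p}}(\nu)$, and similarly at $\mathfrak{P}'$.
\item Since $\nu$ is $p$-integral with even orders at $\mathfrak{p}, \mathfrak{p}'$, these prescribed orders are nonnegative integers, so $J$ is integral.
\item With these orders, the equality $\text{Nm}^L_F(J) = (\nu)\mathcal{D}_F\mathfrak{q}^{-1}$ holds as genuine $\mathcal{O}_F$-ideals. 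Away from $p$ it is the $\mathcal{O}_F[1/p]$-equality. At $\mathfrak{p}$ we have $\text{ord}_{\mathfrak{p}}(\text{Nm}^L_F(J)) = 2\,\text{ord}_{\mathfrak{P}}(J)$ because $\mathfrak{p}$ is inert.
\item Conversely, any integral $J$ with the right norm must have exactly these orders at $\mathfrak{P}, \mathfrak{P}'$, because $\mathfrak{P}, \mathfrak{P}'$ are the only primes of $L$ above $p$. This gives injectivity.
\end{itemize}

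\textbf{Main obstacle.} The only delicate point is the local bookkeeping at the primes above $p$. One needs that $p$ splits in $F$ into two primes, each inert in $L$, and that $p \nmid D q$; then the $p$-part of $J$ is forced by $\nu$. Everything else is a direct citation of Theorem \ref{refinedthm}.
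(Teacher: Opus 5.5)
Your proposal is correct and is essentially the paper's own proof. The case of odd order at $\mathfrak{p}$ or $\mathfrak{p}'$ is disposed of by Remark~\ref{evenval} together with the inertness of $\mathfrak{p},\mathfrak{p}'$ in $L$. In the even case, Theorem~\ref{refinedthm} is combined with the unique integral lift $J$ of $\mathcal{I}$ obtained by setting $\text{ord}_{\mathfrak{P}}(J)=\text{ord}_{\mathfrak{p}}(\nu)/2$ and $\text{ord}_{\mathfrak{P}'}(J)=\text{ord}_{\mathfrak{p}'}(\nu)/2$.

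One correction to your parenthetical justification:
\begin{itemize}
\item Corollary~\ref{idealnorm2} is an identity of $\mathcal{O}_F[1/p]$-ideals and says nothing at $\mathfrak{p},\mathfrak{p}'$.
\item The parity claim is not valid for arbitrary pairs of embeddings into $\mathcal{R}$. Replacing $\alpha_2$ by $\varpi^{-1}\alpha_2\varpi$, with $\varpi\in\mathcal{R}^{\times}$ and $\text{ord}_p(\text{Nm}(\varpi))$ odd, changes the parity of every $\text{ord}_{\mathfrak{p}}(q_F(b))$ by Proposition~\ref{picdetf}.
\item What actually transfers the parity from $q_F$ to $q_F[c_1,c_2]$ is Proposition~\ref{picdetf} together with the normalisation that $\text{ord}_p(\text{Nm}(\xi_i))$ is even.
\end{itemize}
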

\begin{proof}
First, if either $\text{ord}_{\mathfrak{p}}( \nu )$ or $\text{ord}_{\mathfrak{p}'}( \nu )$ is odd, then there cannot exist such an ideal $J \subset \mathcal{O}_L$, for its norm will always have an even number of factors of $\mathfrak{p}$ and $\mathfrak{p}'$. By Remark \ref{evenval}, neither can the value $\nu$ be attained by the form $q_F[c_1,c_2]$, so the result trivially holds in this case.

On the other hand, if these valuations are both even, then using Theorem \ref{refinedthm}, it suffices to note that there will now be a unique $\mathcal{O}_L$-ideal $J$ with the right norm if we set $\text{ord}_{\mathfrak{P}}( J ) = \text{ord}_{\mathfrak{p}}( \nu ) / 2$ and $\text{ord}_{\mathfrak{P}'}( J ) = \text{ord}_{\mathfrak{p}'}( \nu ) / 2$. Indeed, $J$ is integral since $\nu$ is assumed $p$-integral.
\end{proof}

In fact, we can be even a bit more precise.

\begin{lemma}\label{Puncertain}
In cases \textbf{AA} and \textbf{BA}, there is a unique class $\widetilde{C}_0 \in \emph{Pic}(L)$ whose image in $\emph{Pic}( \mathcal{O}_L[1/p] )$ is $C_0$, and whose image under the norm map to $\emph{Pic}(F)^+$ is the class of $[ \mathcal{D}_F \mathfrak{q}^{-1} ]$. In cases \textbf{AB} and \textbf{BB}, there are two such classes, $\widetilde{C}_0$ and $\widetilde{C}_0 + [ \mathfrak{P}^k ]$, where $2k = \emph{ord}([\mathfrak{p}]) \in \emph{Pic}(F)^+$.
\end{lemma}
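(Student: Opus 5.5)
We are counting classes $\widetilde{C} \in \text{Pic}(L)$ that simultaneously lift $C_0$ along the quotient map $\text{Pic}(L) \to \text{Pic}(\mathcal{O}_L[1/p]) = \text{Pic}(L)/\langle [\mathfrak{P}] \rangle$ (see the proof of Proposition \ref{exact22}) and satisfy $\text{Nm}^L_F(\widetilde{C}) = [\mathcal{D}_F \mathfrak{q}^{-1}] \in \text{Pic}(F)^+$. The plan has three steps: show existence, describe all lifts of $C_0$ as a coset of $\langle [\mathfrak{P}] \rangle$, and then determine which elements of that coset have the prescribed norm, using Lemma \ref{pidords}.

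\emph{Existence.} Pick any $b \in B^{\times}$ and regard $I_b$ as an honest $\mathcal{O}_L$-ideal. To do this, let $\Lambda \subset L$ be an $\mathcal{O}_L$-lattice with $\Lambda \mathcal{O}_L[1/p] = \iota(\mathcal{R})$, and set $\widetilde{I}_b = \iota(b)\Lambda^{-1}$. Its class in $\text{Pic}(\mathcal{O}_L[1/p])$ is $[I_b] = C_0$. By Corollary \ref{idealnorm2}, $\text{Nm}^L_F(\widetilde{I}_b)$ agrees with $q_F(b)\mathcal{D}_F\mathfrak{q}^{-1}$ away from $p$. Since $q_F(b) \gg 0$, the norm class of $\widetilde{I}_b$ in $\text{Pic}(F)^+$ is therefore $[\mathcal{D}_F \mathfrak{q}^{-1}] + a[\mathfrak{p}] + a'[\mathfrak{p}']$ for some integers $a, a'$. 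Because $[\mathfrak{p}'] = -[\mathfrak{p}]$, this equals $[\mathcal{D}_F\mathfrak{q}^{-1}] + s[\mathfrak{p}]$ for some $s \in \mathbb{Z}$.

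We claim $s$ is even, and this parity argument is the main obstacle. The plan is to compute the $\mathfrak{p}$- and $\mathfrak{p}'$-valuations of the ideal $q_F(b)\mathcal{D}_F\mathfrak{q}^{-1}\cdot \text{Nm}^L_F(\widetilde{I}_b)^{-1}$. We know $\text{ord}_{\mathfrak{p}}(\text{Nm}^L_F(\widetilde{I}_b))$ is even, because $\mathfrak{p}$ is inert in $L$. We know $\text{ord}_{\mathfrak{p}}(q_F(b))$ is even by Remark \ref{evenval}. Finally, $\mathcal{D}_F$ and $\mathfrak{q}$ are prime to $p$. So $a$ and $a'$ are both even, hence so is $s = a - a'$. (Alternatively, one can apply $\chi$: $\chi\circ\text{Nm}^L_F$ is trivial, and $\chi([\mathcal{D}_F\mathfrak{q}^{-1}])$ should be $-1$ times... but the valuation argument avoids computing that.) Writing $s = 2t$, the class $\widetilde{C}_0 = [\widetilde{I}_b] - t[\mathfrak{P}]$ has norm $[\mathcal{D}_F\mathfrak{q}^{-1}]$, since $\text{Nm}^L_F(\mathfrak{P}) = \mathfrak{p}^2$, and it still maps to $C_0$.

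\emph{Classification.} Every other lift of $C_0$ has the form $\widetilde{C}_0 + m[\mathfrak{P}]$, and its norm is $[\mathcal{D}_F\mathfrak{q}^{-1}] + 2m[\mathfrak{p}]$. This equals $[\mathcal{D}_F\mathfrak{q}^{-1}]$ exactly when $2k \mid 2m$, i.e. when $k \mid m$. So the admissible classes are $\widetilde{C}_0 + \langle k[\mathfrak{P}] \rangle$, whose size is $\text{ord}([\mathfrak{P}])/k$. By Lemma \ref{pidords} this is $1$ when $\text{ord}([\mathfrak{P}]) = k$ (cases \textbf{AA}, \textbf{BA}) and $2$ when it equals $2k$ (cases \textbf{AB}, \textbf{BB}). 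In the latter case the two classes are $\widetilde{C}_0$ and $\widetilde{C}_0 + [\mathfrak{P}^k]$, which proves the statement.
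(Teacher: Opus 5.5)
Your proof is correct, and its classification step is exactly the paper's argument: the admissible lifts form a coset of $\langle [\mathfrak{P}] \rangle \cap \ker\big(\text{Nm}^L_F : \text{Pic}(L) \to \text{Pic}(F)^+\big) = \langle [\mathfrak{P}^k] \rangle$, which has size $1$ or $2$ by Lemma \ref{pidords}. You also prove that such a lift exists, which the paper's proof takes for granted; for that step, the $\chi$-argument you set aside in your parenthetical in fact closes immediately, since $\chi \circ \text{Nm}^L_F = 1$ and $\chi(\mathcal{D}_F) = \chi(\mathfrak{q}) = \chi([\mathfrak{p}]) = -1$ force $s$ to be even, and it needs no input on $\text{ord}_{\mathfrak{p}}(q_F(b))$, so it is the cleaner choice.
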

\begin{proof}
Under the conditions from the lemma, the uncertainty in the class $\widetilde{C}_0 \in \text{Pic}(L)$ is measured by the size of the intersection of $\text{ker}( \text{Pic}(L) \to \text{Pic}(F)^+ )$ and the subgroup $\langle [ \mathfrak{P} ] \rangle$. As $\text{Nm}^L_F( \mathfrak{P}^k ) = \mathfrak{p}^{2k}$ is the smallest power of $\mathfrak{P}$ whose norm is trivial in $\text{Pic}(F)^+$, this intersection is exactly the subgroup generated by $[ \mathfrak{P}^k ]$. By Lemma \ref{pidords}, this group is trivial or of size two precisely as described in the lemma. 
\end{proof}

In analogy with Theorem \ref{HYthm}, we set for $C \in \text{Pic}(L)$ and $I \subset \mathcal{O}_F$ the notation 
\[
\rho_C( I ) := \# \{ J \subset \mathcal{O}_L \mid \text{Nm}^L_F( J ) = I \text{ and } [ J ] = C \in \text{Pic}( L ) \}.
\]
\begin{theorem}\label{bestcount}
There is a unique $\widetilde{C}_0 \in \emph{Pic}(L)$ with image $C_0 \in \emph{Pic}( \mathcal{O}_L[1/p] )$ and for all $p$-integral $\nu \in F^+$,
\[ 
\# \{ (b,c_1,c_2) \in \mathcal{R} \times \emph{Cap} \mid q_F[c_1,c_2](b) = \nu \} = \frac{w_1w_2}{2} \rho_{\widetilde{C}_0} \! \left( (\nu) \mathcal{D}_F \mathfrak{q}^{-1} \right).
\]
\end{theorem}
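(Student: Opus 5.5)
The plan is to deduce the statement from Corollary~\ref{refinedcount} by refining its class condition in $\text{Pic}(\mathcal{O}_L[1/p])$ to one in $\text{Pic}(L)$, and by restricting from $\text{Cap}_p$ to the subgroup $\text{Cap}$. Fix a $p$-integral $\nu \in F^+$. If $\text{ord}_{\mathfrak{p}}(\nu)$ or $\text{ord}_{\mathfrak{p}'}(\nu)$ is odd, both sides vanish, exactly as in the proof of Corollary~\ref{refinedcount}, so assume both are even. Every integral $J \subset \mathcal{O}_L$ counted in Corollary~\ref{refinedcount} has norm $(\nu)\mathcal{D}_F\mathfrak{q}^{-1}$. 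Its class in $\text{Pic}(L)$ therefore maps to $C_0$ in $\text{Pic}(\mathcal{O}_L[1/p])$ and to $[\mathcal{D}_F\mathfrak{q}^{-1}]$ in $\text{Pic}(F)^+$. By Lemma~\ref{Puncertain}, $[J]$ is then $\widetilde{C}_0$, or, in cases \textbf{AB} and \textbf{BB} only, possibly $\widetilde{C}_0 + [\mathfrak{P}^k]$. The uniqueness claim in the statement is to be read together with the norm condition; Lemma~\ref{Puncertain} gives it in cases \textbf{AA} and \textbf{BA}. In cases \textbf{AB} and \textbf{BB}, part of the task is to single out which of the two lifts is $\widetilde{C}_0$.

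\textbf{Cases AA and BA.} By Proposition~\ref{connectcard}, $\#\text{Cap}_p = 2^b$. Since $\#\text{Cap} = [\mathcal{O}_F^{\times,+} : \text{Nm}^L_F(\mathcal{O}_L^\times)]$ and $\text{Cap} \subset \text{Cap}_p$, the two groups coincide in these cases. Every $J$ in Corollary~\ref{refinedcount} then has class $\widetilde{C}_0$, and the theorem is exactly that corollary.

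\textbf{Cases AB and BB.} Here $\text{Cap}_p = \text{Cap} \sqcup (\varphi(\beta) + \text{Cap})$, and $\text{ext}(\varphi(\beta)) = [\mathfrak{P}^k] \in \text{Pic}(L)$ by the construction in Theorem~\ref{exactseq}. I will show that the fibres of the $\frac{w_1w_2}{2}$-to-$1$ map of Theorem~\ref{refinedthm}, after passing to the unique integral $J$ with prescribed $\mathfrak{P},\mathfrak{P}'$-valuations, split according to this coset decomposition. Concretely, I want a fixed class $\widetilde{C}_0$ such that
\[
[J] = \widetilde{C}_0 \text{ for triples with } (c_1,c_2) \in \text{Cap}, \qquad [J] = \widetilde{C}_0 + [\mathfrak{P}^k] \text{ for triples with } (c_1,c_2) \in \varphi(\beta) + \text{Cap}.
\]
Granting this, restricting the bijection of Theorem~\ref{refinedthm} to triples with $(c_1,c_2) \in \text{Cap}$ gives the count $\frac{w_1w_2}{2}\rho_{\widetilde{C}_0}\big((\nu)\mathcal{D}_F\mathfrak{q}^{-1}\big)$.

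The dichotomy comes from comparing valuations at $\mathfrak{P}$. Write $J = J_0 \mathfrak{P}^{a}\mathfrak{P}'^{a'}$ with $J_0$ prime to $p$, and $a = \text{ord}_{\mathfrak{p}}(\nu)/2$, $a' = \text{ord}_{\mathfrak{p}'}(\nu)/2$. Since $[\mathfrak{P}'] = -[\mathfrak{P}]$, this gives $[J] = [J_0] + (a-a')[\mathfrak{P}]$.
\begin{enumerate}[label=(\roman*)]
\item Track a preimage triple through the proof of Theorem~\ref{refinedthm}. There, $q_F(b) = \epsilon\nu$ with $\epsilon \in \mathcal{O}_F[1/p]^*$, and one passes to $u^{-1}*b$, possibly after twisting by $\varphi(\beta)$ and/or $\varphi(\epsilon_F)$.
\item Multiplying by $u \in \mathcal{O}_L[1/p]^\times = \mathcal{O}_1^\times\mathcal{O}_2^\times\langle\epsilon_L, p, \alpha\rangle$ with $\alpha\mathcal{O}_L = \mathfrak{P}^{2k}$ (here $t = 2k$) changes $a - a'$ only by multiples of $2k$. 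This leaves the $\text{Pic}(L)$ class unchanged, because $[\mathfrak{P}^{2k}] = 0$.
\item Twisting by $\varphi(\epsilon_F)$ multiplies values by a global unit (Proposition~\ref{idealambig2}), so again $[J]$ is unchanged.
\item Twisting by $\varphi(\beta)$ multiplies values by $\beta$, with $\beta\mathcal{O}_F = \mathfrak{p}^{2k}$ (Proposition~\ref{idealambig3}). This shifts $a - a'$ by $k$ while fixing $J_0$ up to principal factors, so $[J]$ moves by $[\mathfrak{P}^k]$.
\end{enumerate}
Defining $\widetilde{C}_0$ as the common class attached to triples with $(c_1,c_2)$ trivial then yields the dichotomy.

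The main obstacle is making step (iv) and the definition of $\widetilde{C}_0$ rigorous: $\iota(\mathcal{R})$ is only an $\mathcal{O}_L[1/p]$-ideal, so $[J] \in \text{Pic}(L)$ is not directly visible from $b$. I would first try to fix a $\mathfrak{P},\mathfrak{P}'$-adic normalisation of $\iota$, for instance by choosing an $\mathcal{O}_L$-lattice $\Lambda \subset \iota(\mathcal{R})$ with $\Lambda[1/p] = \iota(\mathcal{R})$. I would then verify, as in the proof of Proposition~\ref{picact2} but now in $\text{Pic}(L)$, that $[J] = [\Lambda^{-1}] - \text{ext}(c_1,c_2)$ plus a multiple of $[\mathfrak{P}]$ determined by $\text{ord}_{\mathfrak{p}}(\nu) - \text{ord}_{\mathfrak{p}'}(\nu)$. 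Once that formula is in place, $\text{ext}(\varphi(\beta)) = [\mathfrak{P}^k]$ forces the claimed split.
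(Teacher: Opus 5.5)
Your outline matches the paper's proof. Cases \textbf{AA} and \textbf{BA} come straight from Corollary~\ref{refinedcount} and Lemma~\ref{Puncertain}. In cases \textbf{AB} and \textbf{BB} the paper likewise argues that norms of $p$-units change $\text{ord}_{\mathfrak{P}}-\text{ord}_{\mathfrak{P}'}$ only by multiples of $2k$, while the twists of Propositions~\ref{idealambig2} and~\ref{idealambig3} move the class by $0$ or $[\mathfrak{P}^k]$. Stating the dichotomy per ideal $J$, rather than per value $\nu$, is the right choice. The paper's claim that the pair $(c_1,c_2)$ is constant over all triples with a given $\nu$ can fail. Indeed $[\mathfrak{P}^k]\in\ker(\text{Nm}^L_F)=(1-\tau)\,\text{Pic}(L)$, with $\tau$ the non-trivial element of $\text{Gal}(L/F)$, so two ideals of equal norm can lie in $\widetilde{C}_0$ and $\widetilde{C}_0+[\mathfrak{P}^k]$.

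The gap is the existence of one class $\widetilde{C}_0$ that serves every $\nu$.
\begin{itemize}
\item Steps (ii)--(iv) only compare a triple with triples obtained from it by a $p$-unit of $L$ or by a twist. They say nothing about $(b_1,1,1)$ versus $(b_2,1,1)$ when $I_{b_1}$ and $I_{b_2}$ are unrelated. So ``the common class attached to triples with $(c_1,c_2)$ trivial'' is assumed, not proved. The paper's ``multiple of $4k$'' sentence has the same limitation, since arbitrary values of $q_F$ differ by norms from $L^\times$, not from $\mathcal{O}_L[1/p]^\times$.
\item Your anticipated formula points the wrong way. If the $[\mathfrak{P}]$-correction really depended on $\text{ord}_{\mathfrak{p}}(\nu)-\text{ord}_{\mathfrak{p}'}(\nu)$, no single $\widetilde{C}_0$ could exist.
\end{itemize}

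The fix is short.
\begin{enumerate}
\item For $b\in B^\times$, let $J[c_1,c_2](b)$ be the fractional $\mathcal{O}_L$-ideal with $J[c_1,c_2](b)\,\mathcal{O}_L[1/p]=I[c_1,c_2]_b$ whose $\mathfrak{P}$- and $\mathfrak{P}'$-valuations are half those of $q_F[c_1,c_2](b)$ at $\mathfrak{p}$ and $\mathfrak{p}'$; these are even by Remark~\ref{evenval}.
\item Put $\lambda=\iota(b_1)/\iota(b_2)$. Then $I_{b_1}I_{b_2}^{-1}=\lambda\,\mathcal{O}_L[1/p]$, and $q_F(b_1)/q_F(b_2)=\text{Nm}^L_F(\lambda)$ by Remark~\ref{isoL}. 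Since $\mathfrak{p}$ and $\mathfrak{p}'$ are inert in $L$, this forces $J[1,1](b_1)\,J[1,1](b_2)^{-1}=\lambda\,\mathcal{O}_L$. Hence $\widetilde{C}_0:=[J[1,1](b)]$ does not depend on $b$.
\item Write $c_i=[J_i]$, with $\xi_i$ as in the proof of Proposition~\ref{picact2}, normalised by Remark~\ref{nmopm}. Then Proposition~\ref{picdetf} and Lemma~\ref{piciota} give
\[
J[c_1,c_2](b)=J[1,1](\xi_2 b\xi_1)\cdot (J_1J_2\mathcal{O}_L)^{-1}.
\]
The valuations at $\mathfrak{P}$ match because $\text{ord}_{\mathfrak{p}}\text{Nm}(J_1J_2)=2\,\text{ord}_{\mathfrak{P}}(J_1J_2\mathcal{O}_L)$, as $p$ is inert in $K_1$ and $K_2$.
\item Hence $[J]=\widetilde{C}_0-\text{ext}(c_1,c_2)$ for every counted triple. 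This is exactly your dichotomy, and it holds uniformly in all four cases.
\end{enumerate}
In your $\Lambda$-language, write $q_F=\beta_\iota\cdot(\text{Nm}^L_F\circ\iota)$ as in Remark~\ref{isoL}. The correction term is then the constant $\tfrac12\text{ord}_{\mathfrak{p}}(\beta_\iota)+\text{ord}_{\mathfrak{P}}(\Lambda)$ minus its $\mathfrak{p}'$-analogue, not a function of $\nu$.
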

\begin{proof}
In cases \textbf{AA} and \textbf{BA}, it holds that $\text{Cap} = \text{Cap}_p$. On the other hand, if an ideal $J \subset \mathcal{O}_L$ satisfies $\text{Nm}^L_F( J ) = ( \nu ) \mathcal{D}_F \mathfrak{q}^{-1}$ and $[ J \mathcal{O}_L[1/p] ] = C_0 \in \text{Pic}( \mathcal{O}_L[1/p] )$, then from Lemma \ref{Puncertain} it follows that the class $[J] = \widetilde{C}_0 \in \text{Pic}(L)$ is uniquely determined. The claim is therefore immediate from Corollary \ref{refinedcount}.

In cases \textbf{AB} and \textbf{BB}, we recall from the proof of Theorem \ref{refinedthm} that the pair $(c_1,c_2) \in \text{Cap}_p$ is constant among all triples $(b,c_1,c_2)  \in \mathcal{R} \times \text{Cap}_p$ for which $q_F[c_1,c_2](b) = \nu$. We may then conclude the proof using Corollary \ref{refinedcount} if we show that this pair $(c_1,c_2)$ is contained in $\text{Cap} \subset \text{Cap}_p$ if and only if any ideal $J \subset \mathcal{O}_L$ such that $\text{Nm}^L_F( J ) = ( \nu ) \mathcal{D}_F \mathfrak{q}^{-1}$ and $[ J \mathcal{O}_L[1/p] ] = C_0 \in \text{Pic}( \mathcal{O}_L[1/p] )$ has the property that $[ J ] = \widetilde{C}_0 \in \text{Pic}(L)$ for the appropriate choice of $\widetilde{C}_0$. 

Recall that every such ideal $J$ is obtained from the constraints $J \mathcal{O}_L[1/p] = I[c_1,c_2]_b$ and $\text{ord}_{\mathfrak{P}}( J ) = \text{ord}_{\mathfrak{p}}( \nu ) / 2$ and $\text{ord}_{\mathfrak{P}'}( J ) = \text{ord}_{\mathfrak{p}'}( \nu ) / 2$. It follows that the ideal class of $J$ in $\text{Pic}(L)$ is entirely determined by the valuations of $\nu$ at $p$. By assumption, the generator $\beta$ of $\mathfrak{p}^{2k}$ is not a norm from $\mathcal{O}_L[1/p]^{\times}$ and therefore $\text{Nm}( \mathcal{O}_L[1/p]^{\times} ) = \langle \text{Nm}( \epsilon_L ) \rangle \times \langle p^2 \rangle \times \langle \beta^2 \rangle$. By Remark \ref{isoL}, the values of $q_F[c_1,c_2]( - )$ all differ up to a norm from $L$. It follows that the difference between the valuations at $\mathfrak{p}$ and $\mathfrak{p}'$ for any such value is a multiple of $4k = v_{\mathfrak{p}}( \beta^2 )$. Therefore, the difference between the valuations at $\mathfrak{P}$ and $\mathfrak{P}'$ of any such ideal $J$ is a multiple of $2k$. But $\mathfrak{P}^{2k}$ is principal, and it follows that the class $[ J ] \in \text{Pic}(L)$ is constant for fixed $(c_1,c_2) \in \text{Cap}_p$. 

Conversely, by Propositions \ref{idealambig2} and \ref{idealambig3}, for any $(c_1, c_2) \in \text{Cap}_p - \text{Cap}$, the values of the forms $q_F$ and $q_F[c_1,c_2]$ on $\mathcal{R}$ will differ up to a factor of $\beta$ or $\epsilon_F \beta$. As $\text{ord}_{\mathfrak{p}}( \beta ) = 2k$ and $\text{ord}_{\mathfrak{p}'}( \beta ) = 0$, it follows that the ideals whose norms equal $( \nu ) \mathcal{D}_F \mathfrak{q}^{-1}$ and $( \nu \beta ) \mathcal{D}_F \mathfrak{q}^{-1}$ respectively must differ by a factor of $\mathfrak{P}^k$, and therefore they will live in the two distinct ideal classes $\widetilde{C}_0$ and $\widetilde{C}_0 + [ \mathfrak{P}^k ]$ from Lemma \ref{Puncertain}. 
\end{proof}

\begin{proposition}\label{HYthmquat4real}
Let $\nu \in F^+$ be such that $\emph{ord}_{\mathfrak{p}}( \nu )$ and $\emph{ord}_{\mathfrak{p}'}( \nu )$ are both even. Then the association
\[
(b,c_1,c_2) \mapsto I[c_1,c_2]_b
\]
establishes a $\frac{w_1w_2}{2}$-to-1 mapping between the sets
\[
\{ (b,c_1,c_2) \in \mathcal{R} \times \emph{Pic}(K_1) \times \emph{Pic}(K_2) \mid q_F[c_1,c_2](b) = \nu \} \text{ and } \{ \mathcal{I} \subset \mathcal{O}_L[1/p] \mid \emph{Nm}^L_F( \mathcal{I} ) = ( \nu ) \mathcal{D}_F \mathfrak{q}^{-1} \}.
\]
\end{proposition}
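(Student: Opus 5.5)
The plan is to reduce the statement to Theorem \ref{refinedthm} by grouping the domain according to cosets of $\text{Cap}_p$ in $\text{Pic}(K_1) \times \text{Pic}(K_2)$, and the target according to ideal classes in $\text{Pic}(\mathcal{O}_L[1/p])$.

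First, well-definedness. For any triple $(b,c_1,c_2)$ with $b \in \mathcal{R}$ and $q_F[c_1,c_2](b) = \nu$, Lemma \ref{iotaindep2} (applied to the embeddings $c_1 \cdot \alpha_1$, $c_2 \cdot \alpha_2$) shows that $I[c_1,c_2]_b$ is an integral $\mathcal{O}_L[1/p]$-ideal. Corollary \ref{idealnorm2} shows its norm is $(\nu)\mathcal{D}_F\mathfrak{q}^{-1}$. Note that the reflex ideal $\mathfrak{q}$ is unchanged by the class group action, as in the Howard--Yang setup, since conjugating embeddings does not change the induced map to $\mathbb{F}_{q^2}$ up to the identification $\mathcal{R}/[\mathcal{R},\mathcal{R}]$. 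By Proposition \ref{picact2}, its class is $[I_b] - \text{ext}(c_1,c_2) = C_0 - \text{ext}(c_1,c_2)$, using $[I_b] = C_0$ as $\iota(b)$ is a scalar.

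Next, the fibers. Fix a target ideal $\mathcal{I}$ of norm $(\nu)\mathcal{D}_F\mathfrak{q}^{-1}$. By Proposition \ref{exact22}, the norm $\text{Nm}^L_F[\mathcal{I}]$ equals that of $C_0$ in $\text{Pic}(\mathcal{O}_F[1/p])^+$. Here one must check that the class of $(\nu)\mathcal{D}_F\mathfrak{q}^{-1}$ there agrees with $\text{Nm}(C_0)$; this holds since it is the norm of $I_b$ for any $b$, up to the principal totally positive $q_F(b)$. Therefore $C_0 - [\mathcal{I}] = \text{ext}(d_1,d_2)$ for some pair $(d_1,d_2)$, unique up to $\text{Cap}_p$.

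Replacing the base embeddings $(\alpha_1,\alpha_2)$ by $(d_1\cdot\alpha_1, d_2\cdot\alpha_2)$ changes $C_0$ to $C_0 - \text{ext}(d_1,d_2) = [\mathcal{I}]$, again by Proposition \ref{picact2} applied to $\iota[d_1,d_2](\mathcal{R})$. We then apply Theorem \ref{refinedthm} to this new base point. That theorem's hypotheses only concern $\nu$ and the existence of the embeddings, which still land in $\mathcal{R}$ by Proposition \ref{picpf}. The triples in the fiber over $\mathcal{I}$ are then exactly those with $(c_1,c_2) \in (d_1,d_2)+\text{Cap}_p$. Since the group action composes (Proposition \ref{picpf}), they correspond to triples $(b, c_1', c_2') \in \mathcal{R} \times \text{Cap}_p$ for the new base. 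The forms $q_F[c_1,c_2]$ and the ideals $I[c_1,c_2]_b$ agree with the rebased versions up to the representative ambiguity of Remark \ref{qfnotdef}. Theorem \ref{refinedthm} then gives exactly $\frac{w_1w_2}{2}$ preimages.

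Triples with $(c_1,c_2)$ outside this coset land in other classes, so they cannot hit $\mathcal{I}$. Summing over classes yields the claim.

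The main obstacle is the bookkeeping in the rebasing step. The forms $q_F[c_1,c_2]$ and the ideals $I[c_1,c_2]_b$ are only defined up to choices of representatives $\xi_i$. I would handle this by fixing representatives $\xi_i$ and tracking $\iota[c_1,c_2](b) = \iota(\xi_2 b \xi_1)$ via Lemma \ref{piciota}. Acting by elements of $\Gamma$ only permutes $b \in \mathcal{R}$ and preserves both the count and the ideal $I_b$. Consequently the map is well defined on the sets as stated.
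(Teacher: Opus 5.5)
Your proposal is correct and follows the same route as the paper: both partition the target by ideal classes in $\text{Pic}(\mathcal{O}_L[1/p])$ lying over $[\mathcal{D}_F\mathfrak{q}^{-1}]$, use exactness to find a pair $(d_1,d_2)$, unique modulo $\text{Cap}_p$, that shifts $C_0$ to $[\mathcal{I}]$, and then apply Theorem~\ref{refinedthm} class by class; your explicit rebasing at $(d_1\cdot\alpha_1, d_2\cdot\alpha_2)$ spells out what the paper leaves implicit. One correction concerns your reason for the reflex ideal being unchanged: conjugation does in general alter the map to $\mathbb{F}_{q^2}$ (conjugating one embedding by an element whose norm has odd $q$-adic valuation induces Frobenius and swaps $\mathfrak{q}$ and $\mathfrak{q}'$), and the actual reason is that $q$ is inert in $K_i$, so $\xi_i \in q^a\mathcal{R}_q^{\times}$ locally at $q$ and conjugation by it acts trivially on the commutative residue field.
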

\begin{proof}
We have seen before that the map is well defined. Now let $\mathcal{ I } \subset \mathcal{O}_L[1/p]$ be any ideal such that $\text{Nm}^L_F( \mathcal{I} ) = ( \nu ) \mathcal{D}_F \mathfrak{q}^{-1}$ as $\mathcal{O}_F[1/p]$-ideals. For any triple $(b,c_1, c_2) \in \mathcal{R} \times \text{Pic}(K_1) \times \text{Pic}(K_2)$, the norm to $F$ of the ideal $I[c_1,c_2]_b$ is also in the class $[ \mathcal{D}_F \mathfrak{q}^{-1} ] \in \text{Pic}( \mathcal{O}_F[1/p] )^+$. By Theorem \ref{exactseq}, the ideal class of any $I[c_1,c_2]_b$ in $\text{Pic}(\mathcal{O}_L[1/p])$ must therefore differ up to an element from $\text{Pic}(K_1) \times \text{Pic}(K_2)$ from the class of $\mathcal{I}$. Using Proposition \ref{picact2}, this means that there exist $c_1 \in \text{Pic}(K_1)$ and $c_2 \in \text{Pic}(K_2)$ such that the class $[\mathcal{I}]$ agrees with the class $[I[c_1,c_2]_b]$ for any $b \in B^{\times}$, and the ambiguity in this choice is precisely measured by $\text{Cap}_p$. The theorem now follows from applying Theorem \ref{refinedthm} to each class in the preimage of $[ \mathcal{D}_F \mathfrak{q}^{-1} ] \in \text{Pic}( \mathcal{O}_F[1/p] )^+$ inside $\text{Pic}( \mathcal{O}_L[1/p] )$, and summing up the results. 
\end{proof}

The following is now immediate from the above, and establishes our $p$-inverted analogue of Theorem \ref{HYthm}.

\begin{corollary}\label{finalcount}
Let $\nu \in F^+$ be $p$-integral. Then
\[ 
\# \{ (b,c_1,c_2) \in \mathcal{R} \times \emph{Pic}(K_1) \times \emph{Pic}(K_2) \mid q_F[c_1,c_2](b) = \nu \} = \frac{w_1w_2}{2} \rho( ( \nu ) \mathfrak{q}^{-1}\mathcal{D}_F ).
\]
\end{corollary}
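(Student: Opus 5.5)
The plan is to deduce Corollary \ref{finalcount} from Proposition \ref{HYthmquat4real} in the same way that Corollary \ref{refinedcount} was deduced from Theorem \ref{refinedthm}. The aim is to trade $\mathcal{O}_L[1/p]$-ideals with prescribed norm for genuine $\mathcal{O}_L$-ideals with prescribed norm.

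First, suppose $\text{ord}_{\mathfrak{p}}(\nu)$ or $\text{ord}_{\mathfrak{p}'}(\nu)$ is odd. Since $\mathfrak{p},\mathfrak{p}'$ are inert in $L$, every norm $\text{Nm}^L_F(J)$ of an $\mathcal{O}_L$-ideal has even valuation at both primes. The ideal $\mathcal{D}_F\mathfrak{q}^{-1}$ is supported away from $p$, because $p\nmid D$ and $q\neq p$. Hence $\rho((\nu)\mathfrak{q}^{-1}\mathcal{D}_F)=0$. On the other side, Remark \ref{evenval}, applied to every form $q_F[c_1,c_2]$ (each comes from a pair of embeddings into $\mathcal{R}$, so the remark applies verbatim), shows that no $b\in\mathcal{R}$ has $q_F[c_1,c_2](b)=\nu$. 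Both sides vanish.

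Now suppose both valuations are even. Proposition \ref{HYthmquat4real} says the left-hand side equals $\frac{w_1w_2}{2}$ times the number of integral $\mathcal{O}_L[1/p]$-ideals $\mathcal{I}$ with $\text{Nm}^L_F(\mathcal{I})=(\nu)\mathcal{D}_F\mathfrak{q}^{-1}$ as $\mathcal{O}_F[1/p]$-ideals. It therefore suffices to give a bijection between these $\mathcal{I}$ and the $\mathcal{O}_L$-ideals $J$ with $\text{Nm}^L_F(J)=(\nu)\mathcal{D}_F\mathfrak{q}^{-1}$ exactly. Given $J$, take $\mathcal{I}=J\mathcal{O}_L[1/p]$. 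Conversely, given $\mathcal{I}$, its prime-to-$p$ part determines a unique $\mathcal{O}_L$-ideal $J_0$ coprime to $\mathfrak{P}\mathfrak{P}'$. Set
\[
J=J_0\,\mathfrak{P}^{\text{ord}_{\mathfrak{p}}(\nu)/2}\,\mathfrak{P}'^{\,\text{ord}_{\mathfrak{p}'}(\nu)/2}.
\]
These exponents are nonnegative integers because $\nu$ is $p$-integral with even valuations. Since $\mathfrak{P},\mathfrak{P}'$ are the only primes of $L$ above $p$ and each has norm $\mathfrak{p}^2$ or $\mathfrak{p}'^2$, this $J$ is the unique $\mathcal{O}_L$-ideal extending $\mathcal{I}$ whose norm has the correct $p$-part. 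Its norm agrees with $(\nu)\mathcal{D}_F\mathfrak{q}^{-1}$ both away from $p$ and at $\mathfrak{p},\mathfrak{p}'$. The two constructions are mutually inverse.

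The only delicate point is the bookkeeping at $p$. One must check that $\mathcal{D}_F\mathfrak{q}^{-1}$ contributes no $p$-part, and that the inertness of $\mathfrak{p},\mathfrak{p}'$ in $L/F$ makes the local lift unique rather than a choice among several primes. Both follow from the standing hypotheses that $p$ is inert in $K_1$ and $K_2$ and that $p\neq q$. Everything else is immediate from Proposition \ref{HYthmquat4real}.
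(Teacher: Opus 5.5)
Your proposal is correct and is essentially the paper's argument. The paper calls Corollary \ref{finalcount} immediate from Proposition \ref{HYthmquat4real}, and the intended deduction is exactly how Corollary \ref{refinedcount} follows from Theorem \ref{refinedthm}: odd valuation of $\nu$ at $\mathfrak{p}$ or $\mathfrak{p}'$ forces both sides to vanish (via Remark \ref{evenval} and the inertness of $\mathfrak{p},\mathfrak{p}'$ in $L$), while even valuations give a unique $\mathcal{O}_L$-ideal lifting each $\mathcal{O}_L[1/p]$-ideal, and you also make explicit that $\mathcal{D}_F\mathfrak{q}^{-1}$ has no $p$-part.
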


\section{The genus of $X_N / \text{w}_N$}\label{genussec}

Let $N$ be a squarefree positive integer and consider the space $\mathcal{S}_2( N ) \colonequals \mathcal{S}_2( \Gamma_0(N) )$ of weight 2 cusp forms of level $\Gamma_0(N)$. This space comes with a natural action by the Hecke algebra of level $N$, which is generated by operators $T_{\ell}$ for primes $\ell \nmid N$ and $U_{\ell}$ for primes $\ell \mid N$. In addition, it carries an action of the Atkin--Lehner group, which is generated by commuting involutions $\text{w}_{\ell}$ for primes $\ell \mid N$, called the Atkin--Lehner operators. We will need the following elementary lemma.

\begin{lemma}\label{Upnew}
Let $N$ be a positive integer and let $p \mid N$ be a prime such that $p^2 \nmid N$. Let $f \in \mathcal{S}_2(N)$ be such that $U_p f = \pm f$, Then $f$ is $p$-new.
\end{lemma}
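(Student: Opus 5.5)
We use the Atkin--Lehner decomposition of $\mathcal{S}_2(N)$ with respect to $p$. Write $N = pM$ with $p \nmid M$. Then
\[
\mathcal{S}_2(N) = \mathcal{S}_2(N)^{p\text{-new}} \oplus \mathcal{S}_2(N)^{p\text{-old}},
\]
and the $p$-old part is spanned by $g(z)$ and $g(pz)$ for $g \in \mathcal{S}_2(M)$. Both summands are stable under $U_p$, since $U_p$ commutes with the Hecke operators away from $p$ and the decomposition is Hecke-equivariant. It therefore suffices to show that $U_p$ has no eigenvalue $\pm 1$ on the $p$-old subspace. Then the $p$-old component of any $f$ with $U_p f = \pm f$ is itself a $\pm 1$-eigenvector in the old space, hence zero.

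On the $p$-old space we compute $U_p$ via the standard relations for $g \in \mathcal{S}_2(M)$, where $V_p g(z) = g(pz)$:
\[
U_p(V_p g) = g, \qquad U_p g = T_p g - p\, V_p g.
\]
So on each two-dimensional block $\langle g, V_p g\rangle$ attached to a $T_p$-eigenform $g$ with eigenvalue $a_p$, the operator $U_p$ has characteristic polynomial $X^2 - a_p X + p$. For a general $p$-old $f$ we first decompose $\mathcal{S}_2(M)$ into $T_p$-eigenforms. This is possible because $T_p$ is self-adjoint for the Petersson product on $\mathcal{S}_2(M)$, hence diagonalisable. The old space is then a direct sum of these $U_p$-stable blocks. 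Alternatively, one can say $U_p$ satisfies $U_p^2 - T_p U_p + p = 0$ on the old space, with $T_p$ acting through $g$.

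Now suppose $\lambda = \pm 1$ is a root of $X^2 - a_p X + p$. Then $1 \mp a_p + p = 0$, so $|a_p| = p+1$. This contradicts the Ramanujan--Petersson (Deligne/Eichler--Shimura) bound $|a_p| \le 2\sqrt{p} < p+1$. More elementarily, the product of the two roots is $p$, so if one root is $\pm 1$ the other is $\pm p$. The roots have absolute value $\sqrt{p}$ by Weil, so neither can be $\pm 1$. Hence $U_p$ has no $\pm 1$-eigenvectors on the $p$-old space, and $f$ is $p$-new.

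The only real subtlety is justifying the block decomposition when $U_p$ may fail to be semisimple on the old space. This is handled by restricting to generalized eigenspaces: $\pm 1$ is not a root of any block's characteristic polynomial, so $U_p \mp 1$ is invertible on the whole old space, and the argument goes through.
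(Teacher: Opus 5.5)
Your proof is correct and follows the paper's argument. In both, the $p$-old space is built from level-$M$ eigenforms $g$ on which $U_p$ has characteristic polynomial $X^2 - a_p(g)X + p$, and a root $\pm 1$ would force $|a_p(g)| = p+1 > 2\sqrt{p}$, contradicting the Hasse--Weil/Ramanujan bound. You are in fact a little more careful than the paper, which leaves implicit two points: the $U_p$-stability of the $p$-new/$p$-old splitting (your one-line justification of this really rests on strong multiplicity one, or on the Petersson adjoint of $U_p$ preserving the old space), and the possible failure of $U_p$ to be semisimple there, which your invertibility of $U_p \mp 1$ on the old space handles cleanly.
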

\begin{proof}
The $p$-old space of $\mathcal{S}_2(N)$ is generated by the Hecke eigenforms from levels $d \mid N$ where $p \nmid d$. If $g$ is an eigenform of level $d$, let $\alpha$ and $\beta$ be the roots of the Hecke polynomial $X^2 - a_p(g)X + p$ of $g$ at $p$. Then $g$ admits two $p$-stabilisations, $g_{\alpha}(z) = g(z) - \beta g(pz)$ and $g_{\beta}(z) = g(z) - \alpha g(pz)$, which are $U_p$-eigenforms of level $N$ with eigenvalues $\alpha$ and $\beta$ respectively. If $\pm 1$ were is a root of this polynomial, then we would find $a_p(g) = \pm (p+1)$. But as $(\sqrt{p}-1)^2 > 0$, it would follow that $| a_p(g) | = p+1 > 2 \sqrt{p}$, contradicting the Hasse--Weil bound. 
\end{proof}

When restricted to the new subspace $\mathcal{S}^{\text{new}}_2( N ) \subset \mathcal{S}_2( N )$, it is well known that
\[
\text{w}_p + U_p = 0.
\]
Recall that the space $\mathcal{S}_2( N )$ may be identified with the space of holomorphic differentials on the modular curve $X_0(N)$. In particular, its dimension equals the genus of this curve.

Various of the following results concern only a finite number of cases, in which, in principle, one may verify the validity of the statements by brute force computation. However, we nonetheless make an effort to give proofs that are as conceptual as possible, to stress that these facts are not mere coincidences. 

\begin{proposition}\label{equiv1}
Let $N$ be a squarefree positive integer supported at an even number of primes. Then the following are equivalent:
\begin{enumerate}[label=\roman*)]
\item The genus of the curve $X_N / \emph{w}_N$ is zero;
\item If $f \in \mathcal{S}^{\emph{new}}_2(N)$ satisfies $\emph{w}_N f = f$, then $f = 0$.
\end{enumerate}
\end{proposition}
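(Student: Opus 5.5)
The equivalence will come from combining the Jacquet--Langlands correspondence with the standard description of holomorphic differentials on a quotient curve. First, identify $H^0(X_N/\text{w}_N, \Omega^1)$ with the $\text{w}_N$-invariant holomorphic differentials on $X_N$. For a finite group $G$ acting on a smooth projective curve $X$, pullback identifies $H^0(X/G,\Omega^1)$ with $H^0(X,\Omega^1)^G$: a $G$-invariant holomorphic form descends to a form on $X/G$ that is holomorphic away from branch points, and a local computation at a branch point of ramification index $e$ shows that a form $u^m\,du$ with $e \mid m+1$ descends holomorphically. Hence $g(X_N/\text{w}_N)=\dim H^0(X_N,\Omega^1)^{\text{w}_N=1}$.

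Next, compute this space via weight 2 quaternionic modular forms on $\mathfrak{R}_1$, which are the holomorphic differentials on $X_N$. The Jacquet--Langlands correspondence gives a Hecke-equivariant isomorphism
\[
\mathcal{S}_2(X_N) \cong \mathcal{S}_2^{\text{new}}(N),
\]
since every prime dividing $N$ ramifies in $\mathfrak{B}$, so only forms that are new at all primes dividing $N$ occur. The remaining point is to determine how the Atkin--Lehner operators on the two sides correspond. For each $\ell\mid N$, on the quaternionic side $\text{w}_\ell$ acts on a newform eigenspace by $\epsilon_\ell$, where the sign is governed by the local representation of $\mathfrak{B}_\ell^\times$. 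For Steinberg at $\ell$, the Jacquet--Langlands transfer is a character of $\mathfrak{B}_\ell^\times$ factoring through the reduced norm, and it is well known that its value on a uniformiser is the \emph{negative} of the $\text{w}_\ell$-eigenvalue on the classical newform. Consequently $\text{w}_N$ on $X_N$ corresponds to $(-1)^{\omega(N)}\text{w}_N$ on $\mathcal{S}^{\text{new}}_2(N)$, where $\omega(N)$ is the number of prime divisors of $N$. Because $\omega(N)$ is even, the sign is $+1$, and therefore
\[
g(X_N/\text{w}_N) = \dim \{ f \in \mathcal{S}_2^{\text{new}}(N) \mid \text{w}_N f = f \}.
\]
This dimension vanishes exactly when (ii) holds, which proves the equivalence.

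The main obstacle is justifying this sign compatibility rigorously. I would cite the standard reference, Ribet's work on bimodules, or equivalently the known relation $\text{w}_\ell = -U_\ell$ on $\mathcal{S}^{\text{new}}_2$ together with the fact that $U_\ell$ on $X_N$ agrees with $\text{w}_\ell$ acting on $X_N$. As a fallback, one can compare fixed points: compute $g(X_N/\text{w}_N)$ via Riemann--Hurwitz using Ogg's count of fixed points of $\text{w}_N$, namely CM points by $\mathbb{Z}[\sqrt{-N}]$, together with $\mathbb{Z}[(1+\sqrt{-N})/2]$ when $N \equiv 3 \bmod 4$. Then compare this with the Eichler--Selberg trace of $\text{w}_N$ on $\mathcal{S}_2^{\text{new}}(N)$, which involves the same class numbers. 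Since only finitely many $N$ are relevant, the tables in \cite{oana} can also confirm the result case by case.
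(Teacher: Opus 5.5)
Your proposal is correct and follows essentially the same route as the paper. Both arguments use three ingredients. The first is the Jacquet--Langlands isomorphism between holomorphic differentials on $X_N$ and $\mathcal{S}^{\text{new}}_2(N)$. The second is the sign twist $\text{w}_\ell \leftrightarrow -\text{w}_\ell$ at each $\ell \mid N$, which cancels for $\text{w}_N$ because $N$ has an even number of prime factors. The third is the identification of $\text{w}_N$-invariant differentials with differentials on $X_N/\text{w}_N$.

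Your local check that invariant differentials descend, and your derivation of the sign from $U_\ell = \text{w}_\ell$ on the quaternionic side, fill in details the paper leaves implicit. The fallback strategies you list are not needed.
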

\begin{proof}
The Jacquet-Langlands correspondence establishes a Hecke-equivariant isomorphism between the space of holomorphic differentials on $X_N$ and the new subspace $\mathcal{S}^{\text{new}}_2( N )$ of $\mathcal{S}_2( N )$. However, as one finds in Theorem 1.2 in \cite{bertolini}, there is a slight sign twist; see also their Section 1.8 or Theorem 2.4 in \cite{oana}. Under this isomorphism, the $\pm 1$-eigenspace of the Atkin--Lehner operator $\text{w}_p$ for any prime $p \mid N$ on the differentials on $X_N$ is identified with the $\mp 1$-space of the Atkin--Lehner operator $\text{w}_p$ on the differentials on $\mathcal{S}^{\text{new}}_2( N )$. As $N$ is the product of an \emph{even} number of primes, it follows that the the actions of the respective operators $\text{w}_N$ on both sides may be identified. Those differentials that are fixed by $\text{w}_N$ are precisely the differentials that descend to the quotient $X_N / \text{w}_N$. Therefore, the genus of $X_N / \emph{w}_N$ being zero is equivalent to the absence of nonzero cusp forms in $\mathcal{S}^{\text{new}}_2( N )$ that are fixed by the Atkin--Lehner operator $\text{w}_N$.
\end{proof}

\begin{remark}\label{genusbound}
It follows from the above that the genus of $X_N / \text{w}_N$ is bounded above by the genus of $X_0( N ) / \text{w}_N$. Indeed, latter quantity is the dimension of the 1-eigenspace of the operator $\text{w}_N$ acting on $\mathcal{S}_2( N )$, whereas the former is the dimension of this same eigenspace when restricted to the new subspace $\mathcal{S}^{\text{new}}_2( N )$. In particular, if $X_0( N ) / \text{w}_N$ has genus zero, then so does $X_N / \text{w}_N$; see also Remark \ref{monster}.
\end{remark}

\begin{lemma}\label{Uptr}
Let $N$ be a squarefree positive integer supported at an even number of primes and let $p \mid N$. Then the trace of $U_p$ acting on $\mathcal{S}^{\emph{new}}_2(N)$ is at most 1, with equality if and only if:
\begin{itemize}
\item There exist primes $q_1, q_2 \mid N$ such that $q_1 \equiv 1 \mod 4$ and $q_2 \equiv 1,3 \mod 8$ if $p = 2$; 
\item There exists an odd prime $q \mid N$ such that $\left( \frac{-p}{q} \right) = 1$ if $p \equiv 1 \mod 4$;
\item There exists a prime $q \mid N$ such that $\left( \frac{-p}{q} \right) = 1$ if $p \equiv 3 \mod 4$.
\end{itemize}
\end{lemma}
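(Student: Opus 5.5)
The plan is to turn the trace into a fixed-point count on the Shimura curve. On $\mathcal{S}^{\text{new}}_2(N)$ we have $U_p = -\text{w}_p$. By the Jacquet--Langlands correspondence with the sign twist recalled in the proof of Proposition \ref{equiv1}, the operator $-\text{w}_p$ on $\mathcal{S}^{\text{new}}_2(N)$ corresponds to the Atkin--Lehner involution $\text{w}_p$ acting on $H^0(X_N, \Omega^1)$. Hence
\[
\text{tr}\big( U_p \mid \mathcal{S}^{\text{new}}_2(N) \big) = \text{tr}\big( \text{w}_p \mid H^0(X_N,\Omega^1) \big).
\]
The Lefschetz fixed point formula for the holomorphic involution $\text{w}_p$ of the compact Riemann surface $X_N(\mathbb{C})$ gives $\#\text{Fix}(\text{w}_p) = 2 - \text{tr}(\text{w}_p \mid H^1)$. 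Since $\text{w}_p$ is an involution, its trace on holomorphic differentials is an integer, and the trace on $H^1$ is twice it. Therefore
\[
\text{tr}\big( U_p \mid \mathcal{S}^{\text{new}}_2(N) \big) = 1 - \tfrac{1}{2}\#\text{Fix}(\text{w}_p).
\]
This proves the bound $\le 1$, with equality if and only if $\text{w}_p$ has no fixed points on $X_N$.

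Next I will use Ogg's description of the fixed points of $\text{w}_p$ on $X_N$ (\cite{ogg83}). They are CM points by the quadratic orders $\mathcal{O}$ containing an element of norm $p$ that normalises the order, namely:
\begin{itemize}
\item $\mathbb{Z}[\sqrt{-p}]$ for every $p$;
\item additionally $\mathbb{Z}[(1+\sqrt{-p})/2]$ when $p \equiv 3 \bmod 4$;
\item $\mathbb{Z}[i]$ and $\mathbb{Z}[\sqrt{-2}]$ when $p = 2$.
\end{itemize}
Ogg's count is a sum over these orders of $h(\mathcal{O})$ times a product of local factors over the primes $q \mid N/p$. Each such factor vanishes precisely when $q$ splits in $\mathcal{O}$, meaning the order does not embed optimally into the maximal order of the quaternion algebra of discriminant $N$. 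The prime $p$ itself is ramified in each of these orders, so it imposes no condition. Thus $\text{Fix}(\text{w}_p) = \emptyset$ if and only if, for every relevant order, some $q \mid N$ splits in it.

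Finally I will translate this condition case by case.
\begin{itemize}
\item If $p \equiv 1 \bmod 4$, the only order is $\mathbb{Z}[\sqrt{-p}]$ of discriminant $-4p$. Here $2$ is ramified in $\mathbb{Q}(\sqrt{-p})$, so only odd $q$ can split, and they do so exactly when $\left(\frac{-p}{q}\right)=1$.
\item If $p \equiv 3 \bmod 4$, both orders must be killed. For odd $q$ the splitting condition is the same for both, namely $\left(\frac{-p}{q}\right)=1$. For $q=2$ one must check that the condition in the maximal order (that is, $-p\equiv 1 \bmod 8$) matches the behaviour of Ogg's local factor at $2$ for the conductor-$2$ order. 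The upshot should be the single criterion: some prime $q \mid N$ with $\left(\frac{-p}{q}\right)=1$, in the Kronecker sense at $q=2$.
\item If $p=2$, one needs a $q_1 \mid N$ split in $\mathbb{Q}(i)$, i.e.\ $q_1\equiv 1 \bmod 4$, and a $q_2 \mid N$ split in $\mathbb{Q}(\sqrt{-2})$, i.e.\ $q_2 \equiv 1,3 \bmod 8$.
\end{itemize}

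The main obstacle I expect is getting the local bookkeeping exactly right. This means confirming the sign convention in the Jacquet--Langlands identification, so that $U_p$ really corresponds to $+\text{w}_p$ on $X_N$. It also means handling Ogg's local factor at $q=2$ for the non-maximal order $\mathbb{Z}[\sqrt{-p}]$ when $p\equiv 3 \bmod 4$, together with the small exceptional cases $p=2,3$ where extra units appear.
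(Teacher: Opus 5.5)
Your proposal is correct and follows the paper's proof. Both arguments identify $\mathrm{tr}(U_p)$ on $\mathcal{S}^{\text{new}}_2(N)$ with the trace of $\text{w}_p$ on the differentials of $X_N$ via the sign-twisted Jacquet--Langlands correspondence, rewrite it as $1-\tfrac12\#\mathrm{Fix}(\text{w}_p)$ (the paper applies Riemann--Hurwitz to $X_N \to X_N/\text{w}_p$ where you use the Lefschetz fixed point formula, which is the same computation), and then invoke Ogg's embedding criterion for the fixed points. The obstacle you flag at $q=2$ resolves itself: $2$ divides the conductor of $\mathbb{Z}[\sqrt{-p}]$ for $p\equiv 3 \bmod 4$, so this order never embeds optimally into the unique local maximal order at the ramified prime $2$, and its local factor vanishes whenever $2\mid N$; combining this with the maximal order gives exactly the Kronecker criterion. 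Accordingly, your phrase ``vanishes precisely when $q$ splits'' should read ``when $q$ splits in $\mathcal{O}$ or divides its conductor''.
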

\begin{proof}
Let $g$ denote the genus of the Shimura curve $X_N$. Then, by the above, $g = \text{dim}( \mathcal{S}^{\text{new}}_2(N) )$. As $U_p + \text{w}_p = 0$ on this space, the $\pm 1$-eigenspace of $U_p$ equals the $\mp 1$-eigenspace of $\text{w}_p$, which is in turn identified with the $\pm 1$ eigenspace of the Atkin--Lehner operators $\text{w}_p$ on the holomorphic differentials on $X_N$. Therefore, the dimension of the $+1$-eigenspace of the $U_p$-operator on $\mathcal{S}^{\text{new}}_2(N)$ is equal to the genus of the Atkin--Lehner quotient $X_N / \text{w}_p$. By the Riemann-Hurwitz formula, 
\[
g( X_N / \text{w}_p ) = (g+1)/2 - \# \{ \text{fixed points of $\text{w}_p$ on $X_N$} \} / 4.
\]
It now follows that $\text{tr}( U_p ) = 1 - \# \{ \text{fixed points of $\text{w}_p$ on $X_N$} \} / 2$, and therefore $\text{tr}(U_p) \leq 1$ with equality if and only if $\text{w}_p$ does not have any fixed points on $X_N$. As explained in Section 2 of \cite{ogg83}, such fixed points correspond to embeddings of an order in $\mathbb{Q}( \sqrt{-p} )$ into the indefinite quaternion algebra $\mathfrak{B}$ ramified at precisely those places dividing $N$, and we obtain no fixed points precisely when there are no such embeddings. The precise version of this statement in \cite{ogg83} then yields the lemma. 
\end{proof}

\begin{corollary}\label{Vcor}
Let $p \neq q$ be two prime numbers and let $N = pq$. Decompose $\mathcal{S}^{\emph{new}}_2( N )$ as the direct sum of $V_{++} \oplus V_{+-} \oplus V_{-+} \oplus V_{--}$ corresponding to the signs of the eigenspaces of the $U_p$ and $U_q$ operators respectively. Then $\emph{dim}( V_{++} ) + \emph{dim}( V_{--} ) = g( X_N / \emph{w}_N )$, and $\emph{ dim}( V_{++} ) \leq \emph{ dim} ( V_{--} ) + 1$, with equality if and only if $\emph{tr}(U_p) = \emph{tr}(U_q) = 1$ on the space $\mathcal{S}^{\emph{new}}_2( N )$.
\end{corollary}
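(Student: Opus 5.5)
On $\mathcal{S}^{\text{new}}_2(N)$ we have $\text{w}_\ell = -U_\ell$ for $\ell \in \{p,q\}$. By Lemma \ref{Upnew}, or simply because $U_\ell$ is an involution on the new space, $U_p$ and $U_q$ are commuting involutions. So the decomposition $V_{++} \oplus V_{+-} \oplus V_{-+} \oplus V_{--}$ makes sense and exhausts $\mathcal{S}^{\text{new}}_2(N)$. Since $\text{w}_N = \text{w}_p \text{w}_q = U_p U_q$ there, the $+1$-eigenspace of $\text{w}_N$ on the new space is $V_{++} \oplus V_{--}$. By the proof of Proposition \ref{equiv1}, its dimension is the dimension of the $\text{w}_N$-invariant holomorphic differentials on $X_N$. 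Jacquet--Langlands identifies the two $\text{w}_N$-actions because $N$ has an even number of prime factors, so this dimension is $g(X_N/\text{w}_N)$. This gives the first claim.

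For the inequality, I write $d_{\pm\pm} = \dim V_{\pm\pm}$ and express the traces in terms of them:
\[
\text{tr}(U_p) = d_{++} + d_{+-} - d_{-+} - d_{--}, \qquad \text{tr}(U_q) = d_{++} - d_{+-} + d_{-+} - d_{--}.
\]
Adding the two gives $\text{tr}(U_p) + \text{tr}(U_q) = 2(d_{++} - d_{--})$. Lemma \ref{Uptr} bounds each trace by $1$, so $d_{++} - d_{--} \le 1$, i.e. $\dim V_{++} \le \dim V_{--} + 1$. Equality forces $\text{tr}(U_p) + \text{tr}(U_q) = 2$ with both terms at most $1$, hence both equal $1$. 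Conversely, if both traces equal $1$ the sum is $2$ and equality holds.

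Nothing here is really an obstacle. The one point needing care is confirming that $\text{w}_N$ acts as $U_pU_q$ on the new space, including the sign: the two minus signs from $\text{w}_\ell = -U_\ell$ cancel. One must also be sure that the identification with $g(X_N/\text{w}_N)$ uses the same twisted Jacquet--Langlands convention as Proposition \ref{equiv1}. I would simply invoke that proposition rather than redo the comparison.
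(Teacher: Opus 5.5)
Your proof is correct and follows the paper's argument: you identify $\text{w}_N$ with $U_pU_q$ on the new space so that $V_{++}\oplus V_{--}$ computes $g(X_N/\text{w}_N)$ via Proposition \ref{equiv1}, and then combine $\text{tr}(U_p)+\text{tr}(U_q)=2(\dim V_{++}-\dim V_{--})$ with Lemma \ref{Uptr}, spelling out both directions of the equality case a bit more explicitly than the paper does. One minor point: that $U_p$ and $U_q$ are commuting involutions on $\mathcal{S}^{\text{new}}_2(N)$ follows from $U_\ell=-\text{w}_\ell$ there and the commutativity of Hecke operators, not from Lemma \ref{Upnew}, so that citation should be dropped.
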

\begin{proof}
From the reasoning in the proof of Proposition \ref{equiv1}, the Atkin-Lehner operator $\text{w}_N$ may be identified with $U_p U_q$ on $\mathcal{S}^{\text{new}}_2( N )$, and therefore the subspace $V_{++} \oplus V_{--}$ is identified with the space of differentials on the quotient $X_N / \text{w}_N$. Since $U_p$ and $U_q$ are involutions that share a common basis of eigenvectors, it follows that $2 \text{ dim}( V_{++} ) -  2 \text{ dim} ( V_{--} ) = \text{tr}( U_p ) + \text{tr}( U_q )$. By Lemma \ref{Uptr}, this is at most 2.
\end{proof}

\begin{remark}
For more general weights $k \geq 4$, Corollary 3.5 in \cite{signs} shows that sign patterns with only minus signs for the Hecke eigenforms spanning the module $\mathcal{S}^{\text{new}}_k( N )$ are, depending on $k$ and the number of primes dividing $N$, always either the most common, or the least common. For weight $k = 2$, there are complications, causing this statement to fail if $N$ has an even number of prime factors. Indeed, for most small $N = pq$, it holds that $\text{dim}(V_{++}) \leq \text{dim}(V_{--})$, but this fails first for $N = 145 = 5 \cdot 29$. 
\end{remark}

\begin{proposition}\label{no-1}
Let $p \neq q$ be two prime numbers and let $N = pq$. Then the following are equivalent:
\begin{enumerate}[label=\roman*)]
\item If $f \in \mathcal{S}^{\emph{new}}_2(N)$ satisfies $\emph{w}_N f = f$, then $f = 0$;
\item If $f \in \mathcal{S}_2(N)$ satisfies $U_p f = -f = U_q f$, then $f = 0$.
\end{enumerate}
\end{proposition}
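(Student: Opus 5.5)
We will pass both conditions through the decomposition $\mathcal{S}^{\text{new}}_2(N) = V_{++} \oplus V_{+-} \oplus V_{-+} \oplus V_{--}$ of Corollary \ref{Vcor}, and show that each is equivalent to the vanishing of $V_{++}\oplus V_{--}$, i.e.\ to $V_{++} = V_{--} = 0$.

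\emph{Condition ii).} Any $f \in \mathcal{S}_2(N)$ with $U_p f = -f$ is $p$-new by Lemma \ref{Upnew}, since $p^2 \nmid N$. Likewise $U_q f = -f$ forces $f$ to be $q$-new. As $N = pq$, the new subspace is the intersection of the $p$-new and $q$-new subspaces, so $f \in \mathcal{S}^{\text{new}}_2(N)$. Hence the simultaneous $-1$-eigenspace of $U_p$ and $U_q$ on all of $\mathcal{S}_2(N)$ is exactly $V_{--}$, and ii) is equivalent to $V_{--} = 0$.

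\emph{Condition i).} On $\mathcal{S}^{\text{new}}_2(N)$ we have $\text{w}_p = -U_p$ and $\text{w}_q = -U_q$, so $\text{w}_N = \text{w}_p\text{w}_q = U_pU_q$. Its $+1$-eigenspace is therefore $V_{++} \oplus V_{--}$, as also recorded in Corollary \ref{Vcor}. Hence i) is equivalent to $V_{++} = V_{--} = 0$.

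\emph{The equivalence.} The implication i) $\Rightarrow$ ii) is now immediate. The converse is the main step: we must show that $V_{--} = 0$ forces $V_{++} = 0$. Corollary \ref{Vcor} gives $\dim V_{++} \leq \dim V_{--} + 1 = 1$, with equality exactly when $\text{tr}(U_p) = \text{tr}(U_q) = 1$ on $\mathcal{S}^{\text{new}}_2(N)$. So it remains to exclude the case $V_{--} = 0$ with $\text{tr}(U_p) = \text{tr}(U_q) = 1$.

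To exclude it, suppose $V_{--}=0$ and $\text{tr}(U_p)=\text{tr}(U_q)=1$, and write $a = \dim V_{+-}$, $b = \dim V_{-+}$, and $\dim V_{++} = 1$. Then
\[
\text{tr}(U_p) = 1 + a - b = 1 \quad\text{and}\quad \text{tr}(U_q) = 1 - a + b = 1,
\]
so $a = b$ and $g = \dim \mathcal{S}^{\text{new}}_2(N) = 1 + 2a$ is odd.

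I would try to reach a contradiction with one of the following.

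\begin{enumerate}[label=\roman*)]
\item Trace zero. The trace of $\text{w}_N$ on $\mathcal{S}^{\text{new}}_2(N)$ equals $1 - a - b + 0 = 1 - 2a$. Riemann--Hurwitz for $\text{w}_N$ on $X_N$, exactly as in the proof of Lemma \ref{Uptr}, gives $\text{tr}(\text{w}_N) = 1 - \#\{\text{fixed points of }\text{w}_N\}/2$. Hence $\text{w}_N$ would have $4a$ fixed points. These points correspond to optimal embeddings of $\mathbb{Z}[\sqrt{-N}]$ and of $\mathbb{Z}[(1+\sqrt{-N})/2]$ (the latter when $N\equiv 3 \bmod 4$) into $\mathfrak{R}$, so $4a$ would be expressible through class numbers of $\mathbb{Q}(\sqrt{-N})$ by Ogg's formula.
\item Congruence conditions. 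Combine this count with the conditions in Lemma \ref{Uptr} that force $\text{tr}(U_p) = \text{tr}(U_q) = 1$. These say that $\text{w}_p$ and $\text{w}_q$ have no fixed points, which constrains $p$ and $q$ modulo $4$ and $8$ via quadratic reciprocity.
\item Parity argument. The group generated by $\text{w}_p$ and $\text{w}_q$ acting on $X_N$, with only $\text{w}_N$ having fixed points, should force the genus of $X_N/\langle \text{w}_p,\text{w}_q\rangle$, which is $\dim V_{--}$, to be positive when $g \geq 1$. Equivalently, in the degree $4$ cover $X_N \to X_N/\langle \text{w}_p,\text{w}_q\rangle$ the Hurwitz count $2g - 2 = 4(2g_0 - 2) + (\text{ramification})$ with $g_0 = 0$ should be incompatible with $g = 1 + 2a$ and $4a$ fixed points of $\text{w}_N$.
\end{enumerate}

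Checking option iii) quickly: each fixed point of $\text{w}_N$ has stabiliser of order $2$ in the Klein group, so it contributes ramification index $2$ in the quotient map. The $4a$ fixed points fall into $2a$ orbits, each giving $2$ points of index $2$, so the ramification term is $4a$. Hurwitz then reads $2g - 2 = -8 + 4a$, i.e.\ $4a = 2(2a) + 6$, which is impossible.

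So the Klein-group Riemann--Hurwitz computation already contradicts $V_{--} = 0$ together with $\dim V_{++} = 1$. I expect verifying this orbit and stabiliser bookkeeping carefully to be the main obstacle.
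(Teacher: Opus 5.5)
Your reduction is the same as the paper's. You get i) $\Rightarrow$ ii) from Lemma \ref{Upnew} and $\text{w}_N = U_pU_q$ on the new space. For the converse, $V_{--}=0$ and Corollary \ref{Vcor} leave only the case $\dim V_{++}=1$, $\text{tr}(U_p)=\text{tr}(U_q)=1$ to exclude.

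\textbf{The gap.} Your exclusion of that case rests on a misidentification. The genus of $X_N/\langle \text{w}_p,\text{w}_q\rangle$ is $\dim V_{++}$, not $\dim V_{--}$. Because of the Jacquet--Langlands sign twist, $\text{w}_\ell$-invariant differentials on $X_N$ correspond to the $+1$-eigenspace of $U_\ell$ on newforms. This is exactly the identification $g(X_N/\text{w}_p)=\dim\{U_p=+1\}$ from Lemma \ref{Uptr}, which you yourself use to conclude that $\text{w}_p$ and $\text{w}_q$ are fixed-point-free. So in the case to be excluded $g_0=1$. Your orbit and stabiliser bookkeeping is correct, and with $g_0=1$ the Hurwitz formula reads $2(1+2a)-2 = 4(2\cdot 1-2)+4a$, which holds identically. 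Your computation therefore proves $g(X_N/\langle \text{w}_p,\text{w}_q\rangle)=1$ rather than giving a contradiction.

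\textbf{Why this route cannot work.} For a Klein four-group acting on a curve, Riemann--Hurwitz for the whole group follows formally from the formulas for the three involutions plus the character decomposition of the differentials. Equivalently, there is Accola's relation $g(X_N)+2g(X_N/G)=g(X_N/\text{w}_p)+g(X_N/\text{w}_q)+g(X_N/\text{w}_N)$. So this approach yields nothing beyond Corollary \ref{Vcor}. Excluding $V_{--}=0$, $\dim V_{++}=1$ needs genuinely arithmetic input.

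\textbf{What the paper does instead.} It argues by enumeration. In that situation $g(X_N/\text{w}_N)=1$, which by the tables of \cite{oana} happens only for thirteen values of $N$. For each of them, Ogg's criterion in Lemma \ref{Uptr} shows that $\text{tr}(U_\ell)\neq 1$ for at least one prime $\ell \mid N$. Your options i) and ii) point toward such arithmetic, namely comparing the genus formula (of size roughly $N/12$) with the class numbers that count fixed points of $\text{w}_N$. As sketched they are not an argument, and carried out they would most likely also reduce to a finite check.
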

\begin{proof}
From Lemma \ref{Upnew}, it follows that if $f \in \mathcal{S}_2(N)$ satisfies $U_p = -f$, then it must be $p$-new. Therefore, if both $U_p = -f$ and $U_q f = -f$, then $f$ must be new, and it satisfies $U_p U_q f = f$. As $\text{w}_p = - U_p$ and $\text{w}_q = -U_q$ on this space, the action of $\text{w}_N$ on $\mathcal{S}^{\text{new}}_2( N )$ coincides with the action of $U_p U_q$. Therefore, \emph{i)} implies \emph{ii)}.

Conversely, assuming \emph{ii)}, in the language of Corollary \ref{Vcor}, we are given that $V_{--} = 0$. From that same corollary, it now follows that $\text{dim}( V_{++} ) \leq 1$ with equality as described in the lemma, and it follows that $g(X_N / \text{w}_N ) \leq 1$. To conclude, in view of Proposition \ref{equiv1}, we must show that equality never holds.

It is here that the author has failed to find a conceptual way to finish the proof, but one can proceed as follows. From the tables in \cite{oana}, it follows that the genus of $X_N / \text{w}_N$ is equal to 1 if and only if $N$ is part of the list $\{ 57, 58, 65, 77, 82, 106, 118, 122, 129, 143, 166, 215, 314 \}$. However, for all these $N$ and $p \mid N$, with the sole exception of $N = 215$ and $p = 5$, one may check using Lemma \ref{Uptr} that the trace of $U_p$ on $\mathcal{S}_2^{\text{new}}( N )$ fails to be 1, and thus the proposition is proved.
\end{proof}

\begin{remark}
Let $N$ be a squarefree positive integer supported at an even number of primes and let $p \mid N$ be a prime divisor. Using analogous arguments, one can also show that the following are equivalent:
\begin{enumerate}[label=\emph{\roman*)}]
\item We have an equality of genera $g( X_N / \text{w}_p ) = g(X_N)$;
\item If $f \in \mathcal{S}^{\text{new}}_2(N)$ satisfies $U_p = -f$, then $f = 0$.
\item If $f \in \mathcal{S}_2(N)$ satisfies $U_p = -f$, then $f = 0$.
\end{enumerate}
The proof of the equivalence of \emph{i)} and \emph{ii)} is very similar to the proof of Proposition \ref{equiv1}. For the equivalence between \emph{ii)} and \emph{iii)}, one uses Lemma \ref{Upnew} to reduce to showing that the non-existence of $p$-new $U_p$-eigenforms with eigenvalue $-1$ guarantees the non-existence of $p$-old $U_p$-eigenforms with eigenvalue $-1$. Again, the author does not have a conceptual proof of this, other than relying on the finite enumeration of all cases, showing that $p \leq 23$, in which case the monstrous nature of $p$ from Remark \ref{monster} implies the claim. 

Finally, in analogy to Remark \ref{genusbound}, one may show that if $X_0( N ) / \emph{w}_p$ has genus zero, then in fact $g( X_N / \emph{w}_p ) = g(X_N)$. The author expects these observations to play a role in a potential resolution of the question raised in Remark \ref{thetaalg}, where $p$ and $q$ no longer behave symmetrically.
\end{remark}

\section{Beyond the Giampietro--Darmon conjecture}\label{thetasec}

Recall from the introduction that our main object of study is the quantity
\[
\frac{ \Theta( \tau_2, \tau_2'; \tau_1 ) }{ \Theta( \tau_2, \tau_2', \tau_1') } = \prod_{ \gamma \in \Gamma } \frac{( \tau_1 - \gamma \tau_2 )( \tau_1' - \gamma \tau_2' )}{ ( \tau_1' - \gamma \tau_2 )( \tau_1 - \gamma \tau_2') },
\]
where $\tau_1, \tau_1'$ and $\tau_2, \tau_2'$ are two pairs of Galois conjugate CM points inside the $p$-adic upper half plane $\mathcal{H}_p$. 

\begin{proposition}\label{thetawelldef}
Let $(w_1, w_2), (z_1, z_2) \in \mathcal{H}_p \times \mathcal{H}_p$ be such that the $\Gamma$-orbits of $w_1$ and $w_2$ are disjoint from the $\Gamma$-orbits of $z_1$ and $z_2$. Then the quantity
\[
\frac{ \Theta( w_1, w_2; z_1 ) }{ \Theta( w_1, w_2, z_2) } \in \mathbb{C}_p
\]
is dependent only on the $\Gamma$-orbits of the points $(w_1, w_2), (z_1, z_2)  \in \Gamma \setminus ( \mathcal{H}_p \times \mathcal{H}_p )$, with $\Gamma$ acting on $\mathcal{H}_p \times \mathcal{H}_p$ diagonally through M\"obius transformations.
\end{proposition}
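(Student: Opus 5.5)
Set $Q(w_1,w_2;z_1,z_2) \colonequals \Theta(w_1,w_2;z_1)/\Theta(w_1,w_2;z_2)$. The plan is to check separately that $Q$ is unchanged when $(w_1,w_2)$ is replaced by $(\gamma w_1,\gamma w_2)$ and when $(z_1,z_2)$ is replaced by $(\gamma z_1,\gamma z_2)$, for $\gamma \in \Gamma$. First we need $Q$ to be well defined. The product defining $\Theta(w_1,w_2;z)$ converges for $z \notin \Gamma w_1 \cup \Gamma w_2$: since $\Gamma$ is discrete and cocompact, $\gamma w_1$ and $\gamma w_2$ approach the boundary $\mathbb{P}^1(\mathbb{Q}_p)$ together. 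The hypothesis on disjoint orbits ensures that no factor vanishes or has a pole at $z_1$ or $z_2$. So both values are nonzero elements of $\mathbb{C}_p$ and $Q$ is defined. We will use the convergence and the factor of automorphy $c_{w_1,w_2}$ as recalled in the introduction (via \cite{GvdP}).

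\textbf{Invariance in the $w$-variables.} Reindex the product over $\Gamma$ by $\gamma' \mapsto \gamma'\gamma$. This gives
\[
\Theta(\gamma w_1,\gamma w_2;z)=\prod_{\gamma'\in\Gamma}\frac{z-\gamma'\gamma w_1}{z-\gamma'\gamma w_2}=\Theta(w_1,w_2;z),
\]
so already $\Theta$ itself, and hence $Q$, only depends on the diagonal orbit of $(w_1,w_2)$. The reindexing is legitimate because the product converges unconditionally: the factors tend to $1$ in a non-archimedean field, so rearrangement is allowed. This is the one analytic point I would justify, with a brief reference to \cite{GvdP}.

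\textbf{Invariance in the $z$-variables.} Here we use the factor of automorphy:
\[
\frac{\Theta(w_1,w_2;\gamma z_1)}{\Theta(w_1,w_2;\gamma z_2)}=\frac{c_{w_1,w_2}(\gamma)\Theta(w_1,w_2;z_1)}{c_{w_1,w_2}(\gamma)\Theta(w_1,w_2;z_2)}=Q(w_1,w_2;z_1,z_2).
\]
The constant $c_{w_1,w_2}(\gamma)$ is the same in the numerator and denominator, so it cancels whatever the genus. This cancellation is precisely why one takes the quotient of two values rather than a single $\Theta$-value.

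The main obstacle, to the extent there is one, is the analytic justification: convergence of the product and the rearrangement used in the $w$-step. The functional equation itself is taken as given from the introduction. Combining the two invariances shows that $Q$ depends only on the classes of $(w_1,w_2)$ and $(z_1,z_2)$ in $\Gamma\setminus(\mathcal{H}_p\times\mathcal{H}_p)$.
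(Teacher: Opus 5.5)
Your proposal is correct and follows the paper's proof. You reindex the product over $\Gamma$ to handle the $w$-variables, and you cancel the $z$-independent factor of automorphy $c_{w_1,w_2}(\gamma)$ between numerator and denominator to handle the $z$-variables. Your extra observations, that $\Theta$ itself is already invariant in the $w$-variables and that unconditional convergence in the non-archimedean setting justifies the rearrangement, are correct refinements that the paper leaves implicit.
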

\begin{proof}
Let $\delta \in \Gamma$. Then it is clear that
\[
\frac{ \Theta( \delta w_1, \delta w_2; z_1 ) }{ \Theta( \delta w_1, \delta w_2, z_2) } = \prod_{\gamma \in \Gamma} \frac{(z_1 - \gamma \delta w_1)(z_2 - \gamma \delta w_2)}{(z_1 - \gamma \delta w_2)(z_2 - \gamma \delta w_1)} = \prod_{\gamma \in \Gamma} \frac{(z_1 - \gamma w_1)(z_2 - \gamma w_2)}{(z_1 - \gamma w_2)(z_2 - \gamma w_1)} = \frac{ \Theta( w_1, w_2; z_1 ) }{ \Theta( w_1, w_2, z_2) },
\]
where we performed the bijective substitution $\gamma \leftrightarrow \gamma \delta^{-1}$. Slightly more subtly, recall the automorphy factor $c_{w_1, w_2} : \Gamma \to \mathbb{C}_p^{\times}$ associated with the Theta function $\Theta( w_1, w_2; z )$. We then compute that
\[
\frac{ \Theta( w_1, w_2; \delta z_1 ) }{ \Theta( w_1, w_2, \delta z_2) } = \frac{c_{w_1, w_2}( \delta )}{c_{w_1, w_2}( \delta )}  \frac{ \Theta( w_1, w_2; z_1 ) }{ \Theta( w_1, w_2, z_2) } =  \frac{ \Theta( w_1, w_2; z_1 ) }{ \Theta( w_1, w_2, z_2) },
\]
and the proposition is proved.
\end{proof}

As explained in the introduction, $B^{\times}$ acts on the $p$-adic upper half plane $\mathcal{H}_p$. Therefore, through the embeddings $\alpha_1, \alpha_2$, both $K_1^{\times}$ and $K_2^{\times}$ will act on $\mathcal{H}_p$ as well. As $p$ is assumed inert in both $K_1$ and $K_2$, there are two Galois conjugate fixed points $\tau_1, \tau_1' \in \mathcal{H}_p$ and $\tau_2, \tau_2' \in \mathcal{H}_p$ for these actions. As we conjugate either embedding $\alpha_i$ with some $\delta \in \Gamma$, the CM points $\tau_i, \tau_i' \in \mathcal{H}_p$ are replaced by $\delta \tau_i$ and $\delta \tau_i'$ respectively. It follows from Proposition \ref{thetawelldef} that the quantity from Equation \ref{maineq}, and therefore also the left hand side of Theorem \ref{mainthm}, is well-defined and independent of the CM points in the $p$-adic upper half plane $\mathcal{H}_p$ that we chose to represent their respective $\Gamma$-orbits.

If $q_F : B \to F$ denotes the $F$-quadratic form associated with our two fixed embeddings $\alpha_i : \mathcal{O}_i \xhookrightarrow{} \mathcal{R}$ for $i \in \{ 1,2 \}$, we let $q_F'$ denote the $F$-quadratic form associated with the embeddings $\overline{\alpha_1}$ and $\alpha_2$, where $\overline{ \alpha_1 }$ denotes the same embedding $\alpha_1$ precomposed with complex conjugation on $\mathcal{O}_1$. By Lemma 4.3 in \cite{daas1}, the numbers $q_F(b), q_F'(b) \in F^+ \cup \{ 0 \}$ are Galois conjugates for all $b \in B$. Extending this notation, if $\nu \in F$, we let $\nu' = \sigma( \nu )$ denote the Galois conjugate.

One can show that $q_F$ admits the explicit formula
\[
q_F(b) = \text{Nm}(b) \frac{ ( \tau_1 - b \tau_2 )( \tau_1' - b \tau_2' ) }{ ( \tau_1 - \tau_1' ) ( b \tau_2 - b \tau_2' ) } \quad \text{for all $b \in B$};
\]
see Theorem 4.4.9 in \cite{thesis}. It follows that 
\[
\frac{q_F(b)}{q_F'(b)} = \frac{ ( \tau_1 - b \tau_2 )( \tau_1' - b \tau_2' ) }{ ( \tau_1' - b \tau_2 )( \tau_1' - b \tau_2 ) } \quad \text{for all $b \in B^{\times}$},
\]
which should remind the reader of Equation \ref{maineq}. The following is true independent of any assumptions on the genus of the various Shimura curves under consideration, and it reflects the independence of the value from Equation \ref{maineq} from any choices regarding the representing CM points $\tau_1, \tau_1', \tau_2, \tau_2' \in \mathcal{H}_p$. 

Recall that Theorem \ref{bestcount} associates to the pair $(\tau_1, \tau_2)$ an ideal class $\widetilde{C}_0 \in \text{Pic}(L)$. This motivates defining
\[
\Theta_{\text{Cap}}( \tau_1, \tau_2 ) := \prod_{(c_1, c_2) \in \text{Cap}} \frac{\Theta(c_2 \cdot \tau_2, c_2 \cdot \tau_2'; c_1 \cdot \tau_1)}{\Theta( c_2 \cdot \tau_2, c_2 \cdot \tau_2'; c_1 \cdot \tau_1')}.
\] 
We stress that $\# \text{Cap} \in \{ 1,2 \}$, so this operation is far less drastic than taking the full norm, and in many cases, it even makes no difference compared to studying the individual values of the $\Theta$-function.

\begin{proposition}\label{thetarw}
It holds that
\[
\frac{2}{w_1w_2} \log_p( \Theta_{\emph{Cap}}( \tau_1, \tau_2 ) ) = \lim_{n \to \infty} \sum_{ \substack{ \nu \in (\mathcal{D}_F^{-1}\mathfrak{q})^+ \\ \emph{tr}(\nu) = p^{2n} } } \log_p \left( \frac{\nu}{\nu'} \right) \cdot \rho_{\widetilde{C}_0}( \nu \mathcal{D}_F \mathfrak{q}^{-1} ).
\]
\end{proposition}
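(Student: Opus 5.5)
The strategy is to rewrite each factor of $\Theta_{\text{Cap}}(\tau_1,\tau_2)$ as a product over $\gamma\in\Gamma$ of ratios $q_F[c_1,c_2](\gamma)/q_F'[c_1,c_2](\gamma)$. We then group the terms by the value $\nu=q_F[c_1,c_2](\gamma)$ and count them with Theorem \ref{bestcount}.

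\emph{Step 1: from $\Theta$ to $q_F$.} Fix $(c_1,c_2)\in\text{Cap}$ and choose representative embeddings; by Proposition \ref{thetawelldef} and Remark \ref{qfnotdef} the choice does not matter. The explicit formula for $q_F/q_F'$ gives
\[
\frac{\Theta(c_2\cdot\tau_2,c_2\cdot\tau_2';c_1\cdot\tau_1)}{\Theta(c_2\cdot\tau_2,c_2\cdot\tau_2';c_1\cdot\tau_1')}=\prod_{\gamma\in\Gamma}\frac{q_F[c_1,c_2](\gamma)}{q_F'[c_1,c_2](\gamma)}.
\]
This is an infinite product, so it must be made precise as a limit. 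The natural exhaustion is by $\gamma$ with $\text{Nm}(\gamma)=1$ inside the $\mathbb{Z}[1/p]$-order. Write $\gamma=p^{-n}b$ with $b\in\mathcal{R}\cap\mathbb{Z}$-integral-at-$p$ and $\text{Nm}(b)=p^{2n}$. Then $q_F(\gamma)=p^{-2n}q_F(b)$, and the ratio $q_F/q_F'$ is unchanged under this scaling. Since $\text{tr}(q_F(b))=\text{Nm}(b)$, the elements $\gamma$ with $p$-denominator at most $p^n$ correspond to $b\in\mathcal{R}$ that are $p$-integral and satisfy $\text{tr}(q_F[c_1,c_2](b))=p^{2n}$.

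I would first check that the $p$-adic convergence of the $\Theta$-product agrees with this truncation. The factors coming from $b$ divisible by $p$ are repeated from level $n-1$ and contribute ratios that are $p$-adic units close to $1$. The limit over $n$ of the truncated products should therefore equal the convergent infinite product. This can be justified using the standard convergence of $p$-adic theta products, as in \cite{daas1}. Taking $\log_p$ converts the product into the sum $\sum\log_p(\nu/\nu')$.

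\emph{Step 2: counting.} Multiply over $(c_1,c_2)\in\text{Cap}$ and group by $\nu$. The $n$-th truncation of $\log_p\Theta_{\text{Cap}}(\tau_1,\tau_2)$ becomes
\[
\sum_{\nu\in F^+,\ \text{tr}(\nu)=p^{2n}}\log_p(\nu/\nu')\cdot\#\{(b,c_1,c_2)\in\mathcal{R}_{(p)}\times\text{Cap}\mid q_F[c_1,c_2](b)=\nu\}.
\]
Here $\mathcal{R}_{(p)}$ denotes the $p$-integral elements, and each such $\nu$ is $p$-integral. Theorem \ref{bestcount} turns the count into $\frac{w_1w_2}{2}\rho_{\widetilde{C}_0}(\nu\mathcal{D}_F\mathfrak{q}^{-1})$, and dividing by $\frac{w_1w_2}{2}$ gives the stated formula. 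The summation condition $\nu\in(\mathcal{D}_F^{-1}\mathfrak{q})^+$ costs nothing: outside this set $\nu\mathcal{D}_F\mathfrak{q}^{-1}$ is non-integral and $\rho$ vanishes.

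The main obstacle is Step 1, namely matching the $p$-integrality used in Theorem \ref{bestcount} with the truncation by $\text{Nm}(b)=p^{2n}$. The counting theorem allows $b$ in the full $\mathbb{Z}[1/p]$-order $\mathcal{R}$, so I need to replace "$\mathcal{R}$" by its $p$-integral part $\mathcal{R}\cap R_{\text{loc}}$ at $p$. I would resolve this as follows. A $p$-integral $\nu$ forces the corresponding ideal $J\subset\mathcal{O}_L$ in the proof of Corollary \ref{refinedcount} to be integral at $\mathfrak{P},\mathfrak{P}'$. This should correspond exactly to $b$ lying in a fixed maximal $\mathbb{Z}_p$-order, up to units that do not affect the counts. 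The same argument should also confirm that the elements $\gamma$ of exact level $n$ exhaust $\Gamma$ compatibly with the $\Theta$-product's convergence.
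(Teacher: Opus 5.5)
\textbf{The gap is in how you truncate the infinite product.} You exhaust $\Gamma$ by asking that $b=p^n\gamma$ lie in a fixed maximal $\mathbb{Z}_p$-order, equivalently a maximal $\mathbb{Z}$-order $R\subset\mathcal{R}$ with $R[1/p]=\mathcal{R}$. You then need this condition to match $p$-integrality of $\nu=q_F[c_1,c_2](b)$. That is the only hypothesis in Theorem \ref{bestcount}, and its count runs over the whole $\mathbb{Z}[1/p]$-order $\mathcal{R}$.

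Your proposed resolution is that $p$-integrality of $\nu$ corresponds exactly to $b$ lying in a maximal $\mathbb{Z}_p$-order, up to units. This is false in the generality of the paper. Work locally at $p$, and let $\Lambda_i\subset\mathbb{Q}_p^2$ be the lattice stabilised by $\alpha_i(\mathcal{O}_i\otimes\mathbb{Z}_p)$; this is the vertex of the Bruhat--Tits tree to which $\tau_i$ reduces. Split $b$ into its $\mathbb{Q}_{p^2}$-linear and semilinear parts. This shows that $\{b\in B\otimes\mathbb{Q}_p : q_F(b)\text{ is }p\text{-integral}\}$ equals $\mathrm{Hom}(\Lambda_1,\Lambda_2)$ for a suitable scaling of $\Lambda_2$.

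That lattice is a maximal order only if $\Lambda_1$ and $\Lambda_2$ are homothetic. Equivalently, $\alpha_1(\mathcal{O}_1)$ and $\alpha_2(\mathcal{O}_2)$ must lie in a common maximal $\mathbb{Z}$-order inside $\mathcal{R}$. Moving $\tau_2$ by $\Gamma$ achieves this only when the two vertices lie in the same $\Gamma$-orbit. That is exactly what fails when $B$ has several non-conjugate maximal orders (e.g.\ $N=22$, $p=2$), which is the case the paper is built to handle.

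When it fails, $1\in R$ but $q_F(1)$ is not $p$-integral. So your claim in Step 2 that every $\nu$ occurring is $p$-integral is false. Moreover, the number of $b\in R$ with $q_F[c_1,c_2](b)=\nu$ is in general not the quantity Theorem \ref{bestcount} computes.

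\textbf{The fix is to avoid maximal $\mathbb{Z}$-orders entirely and truncate by the values themselves.} By Corollary \ref{idealnorm2}, each $q_F[c_1,c_2](\gamma)$ lies in $X^+$, where $X=\mathcal{D}_F^{-1}\mathfrak{q}\mathcal{O}_F[1/p]$, and has trace $1$. Take level $n$ to consist of those $\gamma$ with $p^{2n}q_F[c_1,c_2](\gamma)\in(\mathcal{D}_F^{-1}\mathfrak{q})^+$.

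Then $b=p^n\gamma$ runs exactly over the set $\{b\in\mathcal{R} : q_F[c_1,c_2](b)=\nu\}$, with $\nu\in(\mathcal{D}_F^{-1}\mathfrak{q})^+$ of trace $p^{2n}$. Conversely, $\mathrm{Nm}(b)=p^{2n}$ forces $p^{-n}b\in\Gamma$. This is verbatim the set counted in Theorem \ref{bestcount}, so no integrality condition on $b$ itself is ever needed.

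These levels are finite: there are finitely many totally positive $\nu$ of a given trace, and each has a finite fibre. They also exhaust $\Gamma$. Since the $p$-adic product converges unconditionally, the limit along this exhaustion is $\log_p\Theta_{\mathrm{Cap}}(\tau_1,\tau_2)$. This is the paper's argument. With this truncation, your Steps 1 and 2 go through unchanged.
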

\begin{proof}
This generalises Theorem 4.5 in \cite{daas1}, which is an average version of this result under the additional assumption that there is a unique maximal order in $B$ up to conjugation. We refine the argument here to the $p$-inverted setting. From the discussion above and taking the $p$-adic logarithm, we obtain
\[
\log_p( \Theta( \tau_1, \tau_2 )_{\text{Cap}} ) = \sum_{ (c_1,c_2) \in \text{Cap} }  \sum_{\gamma \in \Gamma} \log_p \left( \frac{q_F[c_1,c_2](\gamma)}{q_F'[c_1,c_2](\gamma)} \right).
\]
We switch the order of summation; instead of recording for each triple $( \gamma, c_1, c_2 ) \in \Gamma \times \text{Cap}$ the value of $q_F[c_1,c_2](\gamma)$, we instead sum over all the possible values $\nu \in F^+$ and record how often each value is attained;
\[
\log_p( \Theta(D_1, D_2) ) = \sum_{\nu \in F^+} \log_p( \nu / \nu' ) \cdot \# \{ ( \gamma, c_1, c_2 ) \in \Gamma \times \text{Cap} \mid q_F[c_1,c_2]( \gamma ) = \nu \}.
\] 
From Corollary \ref{idealnorm2}, it follows that the $\mathcal{O}_F[1/p]$-ideal $q_F[c_1,c_2]( \gamma ) \mathcal{D}_F \mathfrak{q}^{-1}$ is integral for all triples $( \gamma, c_1, c_2 )$ as above. Thus, if we let $X \colonequals \mathcal{D}_F^{-1} \mathfrak{q} \mathcal{O}_F[1/p] \subset F$, only those $\nu \in X^+$ can contribute to the sum above. In addition, as $\gamma \in \Gamma$ means that $\text{tr}( q_F[c_1,c_2]( \gamma ) ) = \text{Nm}( \gamma ) = 1$, we also get no contribution unless $\text{tr}( \nu ) = 1$. Conversely, if $b \in \mathcal{R}$ is such that $q_F[c_1,c_2]( b )$ has trace 1, then $\text{Nm}( b ) = 1$ and therefore $b \in \Gamma$. It follows that
\[
\log_p( \Theta(D_1, D_2) ) = \sum_{ \substack{ \nu \in X^+ \\ \text{tr}( \nu ) = 1 } } \log_p( \nu / \nu' ) \cdot \# \{ ( b, c_1, c_2 ) \in \mathcal{R} \times \text{Cap} \mid q_F[c_1,c_2]( b ) = \nu \}.
\]
For every integer $n \geq 0$, let $X^+(n) \colonequals \{ \nu \in X^+ \mid p^{2n} \nu \in (\mathcal{D}_F^{-1} \mathfrak{q})^+ \}$. There are inclusions $X^+(n) \xhookrightarrow{} X^+(n+1)$ for all $n \geq 0$, and we see that
\[
X^+ = \bigcup_{n=1}^{\infty} X^+(n).
\]
Multiplication by $p^{2n}$ induces a bijection between $X^+(n)$ and $\{ \nu \in ( \mathcal{D}_F^{-1} \mathfrak{q} )^+ \mid \text{tr}( \nu ) = p^{2n} \}$. In addition, the number of triples $(b, c_1, c_2) \in \mathcal{R} \times \text{Cap}$ such that $q_F[c_1,c_2]( b ) = \nu$ is the same as the number of triples such that $q_F[c_1,c_2]( b ) = p^{2n} \nu$ using the bijective substitution $b \mapsto p^n b$ in $\mathcal{R}$. It follows that
\[
\log_p( \Theta(D_1, D_2) ) = \lim_{n \to \infty} \sum_{ \substack{ \nu \in ( \mathcal{D}_F^{-1} \mathfrak{q} )^+ \\ \text{tr}( \nu ) = p^{2n} } } \log_p( \nu / \nu' ) \cdot \# \{ ( b, c_1, c_2 ) \in \mathcal{R} \times \text{Cap} \mid q_F[c_1,c_2]( b ) = \nu \}.
\]
As now $\nu$ is $p$-integral, it follows from Theorem \ref{bestcount} that this count equals $\frac{w_1w_2}{2} \rho_{\widetilde{C}_0}( \nu \mathcal{D}_F \mathfrak{q}^{-1} )$.
\end{proof}

There is an entirely analogous formula for the quantity $\Theta_{p, \text{cap}}( \tau_1, \tau_2 )$, defined in the obvious way, where instead the sum is taken over all $\nu \in ( \mathcal{D}_F^{-1} \mathfrak{q} )^+$ of trace $p^{2n+1}$, as $n$ tends to infinity. Taking the product over the full class groups instead, we obtain from Corollary \ref{finalcount} that
\begin{equation} \label{rewriteeq}
\frac{2}{w_1w_2} \log_p \left( \Theta(D_1, D_2) \right) = \lim_{n \to \infty} \sum_{ \substack{ \nu \in (\mathcal{D}_F^{-1}\mathfrak{q})^+ \\ \text{tr}(\nu) = p^{2n} } } \log_p \left( \frac{\nu}{\nu'} \right) \cdot \rho( \nu \mathcal{D}_F \mathfrak{q}^{-1}  ),
\end{equation}
with again an obvious analogue for $\Theta_p( D_1, D_2 )$. To prove our main Theorem \ref{mainthm}, we will further analyse the cusp form constructed in \cite{daas1}. Let us denote this by
\[
f_{\text{GD}} \colonequals e^{\text{ord}}\left( \frac{d}{d\epsilon} E^{(p)}_{1,\chi}(z,z; \epsilon) \right) \in \mathcal{S}_2( \Gamma_0(N) ).
\]
For any integral ideal $J$ of $F$, we let $\widetilde{J}$ denote its $p$-deprivation, which is obtained by removing all factors of $\mathfrak{p}_1$ and $\mathfrak{p}_2$ from the factorisation of $J$. Recall that $\mathfrak{q}$ denotes the reflex ideal associated with the embeddings $\alpha_i \colon \mathcal{O}_i \xhookrightarrow{} R_q$. For any $\nu \in (\mathcal{D}_F^{-1}\mathfrak{q})^+$, let $J_{\nu}$ denote the ideal $\nu \mathcal{D}_F \mathfrak{q}^{-1}$. 

Recall the genus character $\chi : \text{Pic}(F)^+ \to \{ \pm 1 \}$. We will refer to those prime powers $\mathfrak{l}^n \| J$ with $\chi( \mathfrak{l}^n ) = -1$ as the \emph{special primes} of an ideal $J \subset \mathcal{O}_F$. Having at least one special prime is a necessary and sufficient condition to \emph{not} be the norm of an $\mathcal{O}_L$-ideal.

If $J$ is coprime to $p$, we let $\mathcal{F}( J ) \in \mathbb{N}$ be the unique positive integer such that
\[
\log_p(\mathcal{F}(J)) \colonequals \begin{cases} 2k \cdot \rho(J / \mathfrak{l}^{2k-1}) \cdot \log_p(\text{Nm}(\mathfrak{l})) &\text{if $\mathfrak{l}^{2k-1} \| J$ is the unique special prime of $J$;} \\ 0 &\text{otherwise.} \end{cases}
\]
\begin{remark}\label{Fdef}
The function $\mathfrak{ F } : \mathbb{N} \to \mathbb{N}$ from Theorem \ref{mainthm} is related to the function $\mathcal{F}( - )$ defined above as follows. Let $J \subset \mathcal{O}_F$ be a \emph{primitive} ideal, meaning that it is not divisible by any rational prime. Then $\mathcal{F}( J ) = \mathfrak{F}( \text{Nm}( J ) )^2$; see Proposition 4.15 in \cite{daas1}. It follows that to compute the values $\mathfrak{F}( (D - x^2) / 4N )$ that occur in Theorem \ref{mainthm}, it suffices to compute the values $\mathcal{F}( \widetilde{ J }_{\nu} )$, where $\nu = ( x + \sqrt{D} ) / 2 \sqrt{D} \in ( \mathcal{D}_F^{-1} \mathfrak{q} )^+$. Indeed, as $\text{tr}( \nu ) = 1$, the ideal $J_{\nu}$ is certainly primitive and the formula $\mathcal{F}( J_{\nu} ) = \mathfrak{F}( \text{Nm}( J_{\nu} ) )^2$ applies.
\end{remark}

In \cite{daas1}, the following theorem was proved during the discussion following Theorem 4.16.
\begin{theorem}\label{fourier}
Let $m \geq 1$ be a positive integer. Then
\[
a_m( f_{\emph{GD}} ) = \lim_{n \to \infty} \sum_{\substack{\nu \in (\mathcal{D}_F^{-1}\mathfrak{q})^+ \\ \emph{tr}(\nu) = m p^{2n}}} (-1)^{\emph{ord}_{\mathfrak{p}}(\nu)} \big(\log_p( \mathcal{F}( \widetilde{J}_{\nu})) - \rho( \widetilde{J}_{\nu} )\log_p( \nu / \nu' ) \big).
\]
\end{theorem}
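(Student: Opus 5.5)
The plan is to compute the $q$-expansion of $f_{\text{GD}}$ directly from the explicit infinitesimal Hida family constructed in \cite{daas1}. I would proceed in three steps: \emph{diagonal restriction}, then \emph{differentiation in $\epsilon$}, then \emph{ordinary projection}. The divisor-sum analysis in the second step carries the arithmetic content.

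\textbf{Step 1: write down the family and restrict to the diagonal.} I would first write the Fourier coefficients of $E^{(p)}_{1,\chi}(\epsilon)$ at $\nu \in (\mathcal{D}_F^{-1}\mathfrak{q})^+$. These should be a divisor sum over ideals $I \mid \widetilde{J}_\nu$ of $\chi(I)$ times an $\epsilon$-deformation factor. I expect this factor to be a $p$-adic character of $\text{Nm}(I)$ or of the ratio $I/(\widetilde{J}_\nu I^{-1})$, evaluated at the totally positive generator, and to reduce to $1$ at $\epsilon = 0$. The $p$-stabilisation explains two features of the formula.
\begin{itemize}
\item Only the $p$-deprived ideal $\widetilde{J}_\nu$ enters.
\item The factor $(-1)^{\text{ord}_{\mathfrak{p}}(\nu)}$ appears, because $\chi(\mathfrak{p}) = -1$ and the stabilisation keeps the eigenvalue $\chi(\mathfrak{p})$ at $\mathfrak{p}$.
\end{itemize}
Restricting to the diagonal $z \mapsto (z,z)$ then gives the $m$-th coefficient as the sum over all $\nu$ with $\text{tr}(\nu) = m$.

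\textbf{Step 2: differentiate at $\epsilon = 0$.} This is the heart of the argument, and it follows the Gross--Zagier genus-character computation. Fix $J = \widetilde{J}_\nu$, and consider the derivative of $\sum_{I \mid J} \chi(I)\, \omega_\epsilon(I)$. I would split into cases according to the special primes of $J$.
\begin{itemize}
\item If $J$ has no special prime, then $\chi(I)$ is multiplicative over the factorisation and the sum at $\epsilon=0$ equals $\rho(J)$. Pairing $I$ with $J/I$, the derivative should collapse to $-\rho(J)\log_p(\nu/\nu')$, the ratio $\nu/\nu'$ arising from the chosen generator.
\item If $J$ has exactly one special prime $\mathfrak{l}^{2k-1}$, then the value at $\epsilon = 0$ vanishes. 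The derivative picks up $\sum_{j=0}^{2k-1} (-1)^j j \log_p \text{Nm}(\mathfrak{l})$, weighted by $\rho(J/\mathfrak{l}^{2k-1})$. Up to sign this gives $2k\,\rho(J/\mathfrak{l}^{2k-1})\log_p\text{Nm}(\mathfrak{l}) = \log_p\mathcal{F}(J)$, while the $\log_p(\nu/\nu')$ term drops because $\rho(J) = 0$.
\item If $J$ has two or more special primes, the sum vanishes to order at least two, so its derivative is zero. This matches $\mathcal{F}(J) = 0$ and $\rho(J) = 0$.
\end{itemize}
Combining the cases yields the summand $\log_p\mathcal{F}(\widetilde{J}_\nu) - \rho(\widetilde{J}_\nu)\log_p(\nu/\nu')$.

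\textbf{Step 3: ordinary projection.} Write $e^{\text{ord}} = \lim_n U_p^{2n}$ along a suitable subsequence. Since $a_m(U_p^{2n} g) = a_{mp^{2n}}(g)$, this produces the limit over $\text{tr}(\nu) = mp^{2n}$, and it gives a genuine cusp form in $\mathcal{S}_2(\Gamma_0(N))$, as established in \cite{daas1}.

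\textbf{Main obstacle.} I expect the hardest part to be the bookkeeping in Step 2. One must pin down the exact normalisation of the deformation character, so that its derivative produces precisely $\log_p(\nu/\nu')$ and not a term involving some other generator, with the sign conventions matching those of \cite{daas1}. A second difficulty is checking that the derivative has no contributions from $\mathfrak{p},\mathfrak{p}'$ beyond the sign $(-1)^{\text{ord}_{\mathfrak{p}}(\nu)}$. I would handle both by comparing with Theorem 4.16 in \cite{daas1} and its subsequent discussion, where this computation is carried out.
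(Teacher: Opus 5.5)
\textbf{Verdict: genuine gap.} The paper does not reprove this statement. It quotes it from the discussion after Theorem 4.16 of \cite{daas1}, so your closing fallback is exactly what the paper does. The computation you present as the heart of the argument, however, cannot be carried out as set up.

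In Steps 1--2 you model the coefficient at $\nu$ as a divisor sum $\sum_{I \mid \widetilde{J}_\nu} \chi(I)\,\omega_\epsilon(I)$. Its derivative then depends on $\nu$ only through the ideal $\widetilde{J}_\nu$. The target summand does not. The element $\epsilon_F \nu$ again lies in $(\mathcal{D}_F^{-1}\mathfrak{q})^+$, with $\widetilde{J}_{\epsilon_F\nu} = \widetilde{J}_\nu$ and the same $\mathfrak{p}$-valuation. Yet
\[
\log_p\bigl(\epsilon_F\nu/(\epsilon_F\nu)'\bigr) = \log_p(\nu/\nu') + 2\log_p(\epsilon_F), \qquad \log_p(\epsilon_F) \neq 0 .
\]
So your first bullet fails for every $\omega_\epsilon$ defined on ideals. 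For a character, pairing $I$ with $\widetilde{J}_\nu/I$ gives $\tfrac12\rho(J)\lambda(J)$ with $\lambda = \frac{d}{d\epsilon}\log\omega_\epsilon|_{\epsilon=0}$; for a norm character this is $\tfrac12\rho(J)\log_p\text{Nm}(J)$, not $\rho(J)\log_p(\nu/\nu')$. For your ``ratio'' variant the pairing gives $0$. An anti-parallel character evaluated at totally positive generators is not well defined, by the same unit computation.

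\textbf{What the statement forces.} The dependence on $\nu$ beyond the ideal $(\nu)$ signals a deformation in a non-parallel weight direction $(1+\epsilon,1-\epsilon)$, which keeps the diagonal restriction in weight $2$. This is needed here because $\chi(J_\nu) = +1$, so the value at $\epsilon = 0$, essentially $(-1)^{\text{ord}_{\mathfrak{p}}(\nu)}\rho(\widetilde{J}_\nu)$, is not identically zero. That is unlike the incoherent Gross--Zagier situation your pairing argument imitates.

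The coefficient at $\nu$ then has the shape (a weight factor depending on $\nu/\nu'$) $\times$ (a multiplicative function of $\widetilde{J}_\nu$ built from Hecke eigenvalues).
\begin{itemize}
\item Differentiating the weight factor against the $\epsilon=0$ value yields the $\rho(\widetilde{J}_\nu)\log_p(\nu/\nu')$ term.
\item The $\log_p\mathcal{F}$ term lives only where $\rho(\widetilde{J}_\nu) = 0$. It must therefore come from the first-order variation of the Hecke eigenvalue at the special prime, propagated through the Hecke recursion for $a(\mathfrak{l}^{2k-1})$.
\end{itemize}
Since $\log_p(\epsilon_F)\neq 0$, the field $F$ has no $p$-adic Hecke characters of non-parallel weight, so no Eisenstein family exists in this direction. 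This is why the paper describes $E^{(p)}_{1,\chi}(\epsilon)$ as an infinitesimal \emph{cuspidal} family obtained via Galois deformation theory and an $R=T$ theorem. That input is exactly what your sketch lacks, so as written it is a citation of \cite{daas1} rather than a proof.

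\textbf{Smaller slips.}
\begin{itemize}
\item $\sum_{j=0}^{2k-1}(-1)^j j = -k$, not $\pm 2k$.
\item In your third bullet you mean $\log_p\mathcal{F}(J) = 0$, i.e.\ $\mathcal{F}(J) = 1$.
\item Taking the ordinary projection along a subsequence does not give the limit over all $p^{2n}$ asserted in the statement unless you know that $U_p^2$ acts trivially on the ordinary part.
\end{itemize}
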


As replacing $\nu$ by $p \nu$ leaves $\widetilde{J}_{\nu}$ and $\nu / \nu'$ invariant, but changes the sign $(-1)^{\text{ord}_{\mathfrak{p}}( \nu )}$, it is immediate from Theorem \ref{fourier} that $f_{\text{GD}}$ is a $U_p$-eigenform with eigenvalue $-1$, and this was used in \cite{daas1} to complete the proof in the case that $N = 22$. We will show, however, that in addition, the form $f_{\text{GD}}$ is also a $U_q$-eigenform with eigenvalue $-1$, where $N = pq$. Interestingly, the nature of this symmetry is rather different.

Key are the following observations, which are also heavily exploited in \cite{daas1}. As $\chi$ is totally odd,
\[
\chi( J_{\nu} ) = \chi( \nu ) \chi( \mathcal{D}_F ) \chi( \mathfrak{q}^{-1} ) = 1 \cdot (-1) \cdot (-1) = 1,
\]
and therefore the number of special primes of $J_{\nu}$ is even. 

Therefore, the only way for $\widetilde{J}_{\nu}$ to have exactly one special prime is if exactly one of $\mathfrak{p}$ and $\mathfrak{p}'$ is special for $J_{\nu}$. In other words, if $\text{ord}_{\mathfrak{p}}( \nu )$ and $\text{ord}_{\mathfrak{p}}( \nu )$ have the same parity, then $\log_p( \mathcal{F}( \widetilde{J}_{\nu})) = 0$. 

Furthermore, $\rho( \widetilde{J}_{\nu} )$ vanishes as soon as $\widetilde{J}_{\nu}$ has a special prime. It follows that if $\text{ord}_{\mathfrak{p}}( \nu )$ and $\text{ord}_{\mathfrak{p}'}( \nu )$ have \emph{opposite} parities, so exactly one of $\mathfrak{p}$ and $\mathfrak{p}'$ is special for $J_{\nu}$, then we have $\rho( \widetilde{J}_{\nu} ) = 0$.

\begin{proposition}\label{Uq-1}
The cusp form $f_{\text{GD}} \in \mathcal{S}_2( N )$ is a $U_q$-eigenform with eigenvalue $-1$.
\end{proposition}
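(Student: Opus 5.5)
The plan is to compare Fourier coefficients using Theorem \ref{fourier}. Since $a_m(U_q f) = a_{qm}(f)$, it suffices to show $a_{qm}(f_{\text{GD}}) = -a_m(f_{\text{GD}})$ for all $m \geq 1$. I would do this by a bijection between the index sets of the two limits, at each fixed level $n$. Unlike the $U_p$-symmetry, which comes from $\nu \mapsto p\nu$, here the symmetry should involve Galois conjugation, because that is what flips the sign of $\log_p(\nu/\nu')$. Concretely, I propose the map
\[
\nu \longmapsto q\nu', \qquad \{ \nu \in (\mathcal{D}_F^{-1}\mathfrak{q})^+ \mid \text{tr}(\nu) = mp^{2n} \} \to \{ \mu \in (\mathcal{D}_F^{-1}\mathfrak{q})^+ \mid \text{tr}(\mu) = qmp^{2n} \}.
\]
Since $q$ is inert in $K_1$ and $K_2$, it splits in $F$ as $q\mathcal{O}_F = \mathfrak{q}\mathfrak{q}'$, with $\mathfrak{q}' = \sigma(\mathfrak{q})$. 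As $\sigma(\mathcal{D}_F^{-1}\mathfrak{q}) = \mathcal{D}_F^{-1}\mathfrak{q}'$ and $\mathfrak{q}\mathfrak{q}' = (q)$, the map is well defined, preserves total positivity, and multiplies the trace by $q$.

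\textbf{Step 1 (bijectivity).} The map is injective, so the work is surjectivity. Take $\mu \in \mathcal{D}_F^{-1}\mathfrak{q}$ with $q \mid \text{tr}(\mu)$ and write $\mu = \lambda/\sqrt{D}$ with $\lambda \in \mathfrak{q}$. Then $\text{tr}(\mu) = (\lambda - \lambda')/\sqrt{D}$. Because $q \nmid D$, the divisibility $q \mid \text{tr}(\mu)$ gives $\lambda \equiv \lambda' \bmod \mathfrak{q}$. Hence $\lambda' \in \mathfrak{q}$, so $\lambda \in \mathfrak{q}\cap\mathfrak{q}' = (q)$. From this one reads off $\mu = q\nu'$ with $\nu \in (\mathcal{D}_F^{-1}\mathfrak{q})^+$. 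I would double-check the bookkeeping with the different here, using that $\mathcal{D}_F = (\sqrt{D})$ when $D$ is a fundamental discriminant coprime to $q$.

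\textbf{Step 2 (ideals).} Next I would compute $J_{q\nu'} = q\nu'\mathcal{D}_F\mathfrak{q}^{-1} = \mathfrak{q}'^2\,\sigma(J_\nu)$. Taking $p$-deprivations gives $\widetilde{J}_{q\nu'} = \mathfrak{q}'^2\sigma(\widetilde{J}_\nu)$, since $q \neq p$ and $\sigma$ swaps $\mathfrak{p}$ and $\mathfrak{p}'$. The prime $\mathfrak{q}'$ is inert in $L$, so multiplying by $\mathfrak{q}'^2$ does not change $\rho$ and does not change the set of special primes; conjugation by $\sigma$ also preserves both. Thus $\rho(\widetilde{J}_{q\nu'}) = \rho(\widetilde{J}_\nu)$. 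For the $\mathcal{F}$-term, a special prime $\mathfrak{l}$ of $\widetilde{J}_\nu$ goes to the special prime $\sigma(\mathfrak{l})$, which has the same norm. The cofactor $\rho(J/\mathfrak{l}^{2k-1})$ is also unchanged, possibly after an extra inert square factor. Hence $\mathcal{F}(\widetilde{J}_{q\nu'}) = \mathcal{F}(\widetilde{J}_\nu)$. Separately, $\log_p(q\nu'/q\nu) = -\log_p(\nu/\nu')$.

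\textbf{Step 3 (signs, the main point).} Since $\text{ord}_{\mathfrak{p}}(q\nu') = \text{ord}_{\mathfrak{p}'}(\nu)$, the sign in Theorem \ref{fourier} changes from $(-1)^{\text{ord}_{\mathfrak{p}}(\nu)}$ to $(-1)^{\text{ord}_{\mathfrak{p}'}(\nu)}$. By the observations just before the proposition, each $\nu$ contributes through only one of the two terms, depending on parity.
\begin{itemize}
\item If $\text{ord}_{\mathfrak{p}}(\nu)$ and $\text{ord}_{\mathfrak{p}'}(\nu)$ have the same parity, only the $\rho\log_p(\nu/\nu')$ term survives. The sign is then unchanged while the logarithm flips, so the contribution flips sign.
\item If they have opposite parities, only the $\log_p\mathcal{F}$ term survives. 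This term is invariant, while the sign $(-1)^{\text{ord}}$ flips, so the contribution again flips sign.
\end{itemize}
In both cases the term for $q\nu'$ is the negative of the term for $\nu$. Summing over the bijection of Step 1 and passing to the limit in $n$ gives $a_{qm}(f_{\text{GD}}) = -a_m(f_{\text{GD}})$.

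I expect the main obstacle to be this sign bookkeeping: verifying that the parity dichotomy really makes the two terms flip for different reasons and never mix. A second place needing care is the surjectivity argument in Step 1, including the exact normalisation of $\mathcal{D}_F$ relative to $\mathfrak{q}$.
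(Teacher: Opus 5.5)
Your treatment of the $\rho(\widetilde J)\log_p(\nu/\nu')$ terms is correct. The argument for the $\log_p\mathcal{F}$ terms, however, has a genuine gap: it rests on two claims that are both false.

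First, $\nu\mapsto q\nu'$ is not surjective. In Step 1, $\lambda\in(q)$ only gives $\mu=q\nu'$ with $\nu\in\mathcal{D}_F^{-1}$; to get $\nu\in\mathcal{D}_F^{-1}\mathfrak{q}$ you would need $\lambda\in q\mathfrak{q}'$. What your computation actually proves is that $\mu\mapsto q\mu'$ is a bijection from the trace-$mp^{2n}$ elements of the \emph{larger} set $(\mathcal{D}_F^{-1})^+$ onto the trace-$qmp^{2n}$ elements of $(\mathcal{D}_F^{-1}\mathfrak{q})^+$. Your map only hits those $\nu$ with $\text{ord}_{\mathfrak{q}'}(\nu)\geq 2$. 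It misses every $\nu=q\mu'$ with $\mathfrak{q}\nmid\mu$. For such $\nu$ one has $\mathfrak{q}'\,\|\,\widetilde J_\nu$, so $\rho(\widetilde J_\nu)=0$, which is why your $\rho$-part survives. But when $\text{ord}_{\mathfrak{q}}(\nu)$ is odd and the $\mathfrak{p},\mathfrak{p}'$-parities are opposite, $\mathfrak{q}'$ is the unique special prime, and then $\log_p\mathcal{F}(\widetilde J_\nu)=2\rho(\widetilde J_\nu/\mathfrak{q}')\log_p q$ is in general nonzero.

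Second, the invariance $\mathcal{F}(\widetilde J_{q\nu'})=\mathcal{F}(\widetilde J_\nu)$ in Step 2 fails when $\mathfrak{q}$ is the unique special prime of $\widetilde J_\nu$. Multiplying by $\mathfrak{q}'^2$ preserves $\rho$ and the \emph{set} of special primes. It does, however, raise the exponent of the special prime $\mathfrak{q}'$ from $2k-1$ to $2k+1$, and $\mathcal{F}$ sees that exponent through its prefactor $2k$. So the termwise identity claimed in Step 3 is false precisely for the terms whose special prime is $\mathfrak{q}$ or $\mathfrak{q}'$.

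The two errors do cancel in total, but showing this needs an idea your proposal lacks: a Galois antisymmetrisation within a \emph{fixed} trace. The trace-$qmp^{2n}$ set is stable under $\nu\mapsto\nu'$, because $q\mid\nu$. Consider the problematic terms, where $\text{ord}_{\mathfrak{q}}(\nu)$ and $\text{ord}_{\mathfrak{q}'}(\nu)$ are both odd. Opposite $p$-parities make $(-1)^{\text{ord}_{\mathfrak{p}}}$ flip under conjugation, so their contribution can be written as $\frac12\sum(-1)^{\text{ord}_{\mathfrak{p}}(\nu)}\bigl(\log_p\mathcal{F}(\widetilde J_\nu)-\log_p\mathcal{F}(\widetilde J_{\nu'})\bigr)$. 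Writing $\nu=q\mu'$, the conjugates of $\widetilde J_\nu$ and $\widetilde J_{\nu'}$ are $\mathfrak{q}^2\widetilde J_\mu$ and $\mathfrak{q}^2\widetilde J_{\mu'}$. The exponent shift is therefore the same for both, and each difference equals $\log_p\mathcal{F}(\widetilde J_\mu)-\log_p\mathcal{F}(\widetilde J_{\mu'})$. Re-symmetrising, and using $\log_p\mathcal{F}=0$ on the non-integral ideals $\widetilde J_\mu$ with $\mathfrak{q}\nmid\mu$, gives minus the corresponding part of the trace-$mp^{2n}$ sum. This is the paper's decomposition $f_{qm}=A_0+A_1$. $A_0$ is exactly the part your bijection handles correctly, and $A_1$ is the cancellation your proof is missing.
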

\begin{proof}
We will show that $f_{\text{GD}}$ was already a $U_q$-eigenform before ordinary projection. The $m$th Fourier coefficient of this object is equal to 
\[
b_m \colonequals \sum_{\substack{\nu \in (\mathcal{D}_F^{-1}\mathfrak{q})^+ \\ \text{tr}(\nu) = m }} (-1)^{\text{ord}_{\mathfrak{p}}(\nu)} \big(\log_p( \mathcal{F}( \widetilde{J}_{\nu})) - \rho( \widetilde{J}_{\nu} )\log_p( \nu / \nu' ) \big).
\]
We will work towards showing that $b_{qm} = -b_m$ for any positive integer $m$. In fact, we define
\[
f_m \colonequals \sum_{\substack{\nu \in (\mathcal{D}_F^{-1}\mathfrak{q})^+ \\ \text{tr}(\nu) = m }} (-1)^{\text{ord}_{\mathfrak{p}}(\nu)} \log_p( \mathcal{F}( \widetilde{J}_{\nu})) \quad \text{and} \quad t_m \colonequals \sum_{\substack{\nu \in (\mathcal{D}_F^{-1}\mathfrak{q})^+ \\ \text{tr}(\nu) = m }} (-1)^{\text{ord}_{\mathfrak{p}}(\nu)} \rho( \widetilde{J}_{\nu} )\log_p( \nu / \nu' ).
\]
As $b_m = f_m - t_m$ for all $m \geq 1$, it now suffices to show that $f_{qm} = - f_m$ and $t_{qm} = - t_m$. 

Let $\nu \in ( \mathcal{D}_F^{-1} \mathfrak{q} )^+$ be such that $\text{tr}( \nu ) = qm$. Since $q$ divides $\nu + \nu'$ and $\mathfrak{q}$ divides $\nu$, it follows that $\mathfrak{q}$ divides $\nu'$ as well and therefore $\mathfrak{q}'$ divides $\nu$. So, $q \mid \nu$ and we may write $\nu = q \mu'$ for some $\mu \in \mathcal{D}_F^{-1,+}$ with $\text{tr}( \mu ) = m$. 

Let us first focus on $t_{qm}$. If $\text{ord}_{\mathfrak{p}}( \nu )$ and $\text{ord}_{\mathfrak{p}'}( \nu )$ have opposite parities, then $\rho( \widetilde{J}_{\nu} ) = 0$, so these elements will not contribute to $t_{qm}$. We now study those $\nu$ for which $\text{ord}_{\mathfrak{p}}( \nu )$ and $\text{ord}_{\mathfrak{p}'}( \nu )$ have the same parity. 

For the value $\rho( \widetilde{J}_{\nu} ) = 0$ to be non-zero, $\mathfrak{q}'$ cannot be a special prime of this ideal. As $\widetilde{J}_{\nu} = \widetilde{\mu}' \mathcal{D}_F \mathfrak{q}'$, it follows that $\text{ord}_{\mathfrak{q}'}( \mu' )$ must be odd, and thus in particular at least 1. Therefore $\mu' \in ( \mathcal{D}_F^{-1} \mathfrak{q}' )^+$, and so $\mu \in ( \mathcal{D}_F^{-1} \mathfrak{q} )^+$. Next, we use that applying the Galois action of $F / \mathbb{Q}$ to an ideal does not change its value 
\renewcommand{\baselinestretch}{0.96}\normalsize
under the function $\rho$, and the conjugate of $\widetilde{J}_{\nu}$ is $( \mu ) \mathcal{D}_F \mathfrak{q} = \mathfrak{q}^2 \cdot \widetilde{J}_{\mu}$. It follows that $\rho( \widetilde{J}_{\nu} ) = \rho( \mathfrak{q}^2 \widetilde{J}_{\mu} ) = \rho( \widetilde{J}_{\mu} )$, where we used that the ideal $\widetilde{J}_{\mu}$ is integral. As $\text{ord}_{\mathfrak{p}}( \nu ) = \text{ord}_{\mathfrak{p}}( \mu' ) \equiv \text{ord}_{\mathfrak{p}'}( \mu' ) = \text{ord}_{\mathfrak{p}}( \mu ) \mod 2$, we find
\[
t_{qm} = \sum_{\substack{\mu \in (\mathcal{D}_F^{-1}\mathfrak{q})^+ \\ \text{tr}(\nu) = m }} (-1)^{\text{ord}_{\mathfrak{p}}(\mu)} \rho( \widetilde{J}_{\mu} )\log_p( \mu' / \mu ) = - t_m,
\]
as $\log_p( \mu' / \mu ) = - \log_p( \mu / \mu' )$, proving the first assertion. 

We now turn to $f_{qm}$, and observe that if $\text{ord}_{\mathfrak{p}}( \nu )$ and $\text{ord}_{\mathfrak{p}'}( \nu )$ have the same parity, then $\log_p( \mathcal{F}( \widetilde{J}_{\nu})) = 0$, so we reduce to studying those $\nu$ for which $\text{ord}_{\mathfrak{p}}( \nu )$ and $\text{ord}_{\mathfrak{p}'}( \nu )$ have opposite parities. The quantity $\log_p( \mathcal{F}( \widetilde{J}_{\nu}))$ is only non-zero if $\widetilde{J}_{\nu}$ has a unique special prime. The prime $\mathfrak{q}$ is special if and only if $\text{ord}_{\mathfrak{q}}( \nu )$ is even, and $\mathfrak{q}'$ is special if and only if $\text{ord}_{\mathfrak{q}'}( \nu )$ is odd. This suggests splitting up the quantity $f_{qm}$ even further; we define
\[
A_0 \colonequals \sum_{\substack{\nu \in (\mathcal{D}_F^{-1}\mathfrak{q})^+ \\ \text{tr}(\nu) = qm \\ \text{ord}_{\mathfrak{q}'}( \nu ) \text{ even } }} (-1)^{\text{ord}_{\mathfrak{p}}(\nu)} \log_p( \mathcal{F}( \widetilde{J}_{\nu})) \quad \text{and} \quad A_1 \colonequals \sum_{\substack{\nu \in (\mathcal{D}_F^{-1}\mathfrak{q})^+ \\ \text{tr}(\nu) = qm \\ \text{ord}_{\mathfrak{q}'}( \nu ) \text{ odd } }} (-1)^{\text{ord}_{\mathfrak{p}}(\nu)} \log_p( \mathcal{F}( \widetilde{J}_{\nu})),
\]
so that $A_0 + A_1 = f_{qm}$. Let us first compute $A_0$, in which case $\text{ord}_{\mathfrak{q}'}( \mu' )$ is odd. In particular it is at least $1$ and we have $\mu' \in ( \mathcal{D}_F^{-1} \mathfrak{q}' )^+$, and so $\mu \in ( \mathcal{D}_F^{-1} \mathfrak{q} )^+$. Assuming $\widetilde{J}_{\nu}$ has a unique special prime, which is now different from $\mathfrak{q}'$, the Galois conjugate ideal $\mathfrak{q}^2 \cdot \widetilde{J}_{\mu}$ also has a unique (Galois conjugate) special prime, now different from $\mathfrak{q}$, and thus so does the integral ideal $\widetilde{J}_{\mu}$. It follows from the definition of $\mathcal{F}$ that now $\mathcal{F}( \widetilde{J}_{\nu} ) = \mathcal{F}( \mathfrak{q}^2 \cdot \widetilde{J}_{\mu} ) = \mathcal{F}(  \widetilde{J}_{\mu} )$. As now $\text{ord}_{\mathfrak{p}}( \nu ) = \text{ord}_{\mathfrak{p}}( \mu' ) \not\equiv \text{ord}_{\mathfrak{p}'}( \mu' ) = \text{ord}_{\mathfrak{p}}( \mu ) \mod 2$, we find
\[
A_0 = - \sum_{\substack{\mu \in (\mathcal{D}_F^{-1}\mathfrak{q})^+ \\ \text{tr}(\mu) = m \\ \text{ord}_{\mathfrak{q}}( \mu ) \text{ odd } }} (-1)^{\text{ord}_{\mathfrak{p}}(\mu)} \log_p( \mathcal{F}( \widetilde{J}_{\mu})).
\]
Finally, we turn to $A_1$. By construction, for each term in the sum, the prime $\mathfrak{q}'$ is special for $\widetilde{J}_{\nu}$, and thus $\mathcal{F}( \widetilde{J}_{\nu})$ is either $1$ or a positive power of $q$. If $\mathfrak{q}$ were also special, then $\log_p( \mathcal{F}( \widetilde{J}_{\nu} ) ) = 0$, so we may assume that $\text{ord}_{\mathfrak{q}}( \nu ) = \text{ord}_{\mathfrak{q}'}( \nu' )$ is also odd. For convenience, we introduce $\text{ord}_q \colonequals ( \text{ord}_{\mathfrak{q}}, \text{ord}_{\mathfrak{q}'} )$. As $q \mid \nu$, also $\nu' \in ( \mathcal{D}_F^{-1} \mathfrak{q} )^+$ and therefore the set over which the sum defining $A_1$ is taken is stable under Galois conjugation. As $\text{ord}_{\mathfrak{p}}( \nu )$ and $\text{ord}_{\mathfrak{p}}(\nu') = \text{ord}_{\mathfrak{p}'}( \nu )$ are assumed to have opposite parities, we find
\[
A_1 = \frac{1}{2} \sum_{\substack{\nu \in (\mathcal{D}_F^{-1}\mathfrak{q})^+ \\ \text{tr}(\nu) = qm \\ \text{ord}_{q}( \nu ) \equiv (1,1) \bmod 2 } } (-1)^{\text{ord}_{\mathfrak{p}}(\nu)} \big( \log_p( \mathcal{F}( \widetilde{J}_{\nu})) - \log_p( \mathcal{F}( \widetilde{J}_{\nu'})) \big).
\] 
The prime $\mathfrak{q}$ is special for the Galois conjugate $\mathfrak{q}^2 \cdot \widetilde{J}_{\mu}$ of $\widetilde{J}_{\nu}$, and we find that $\mathcal{F}( \widetilde{J}_{\nu} ) = \mathcal{F}( \mathfrak{q}^2 \cdot \widetilde{J}_{\mu} )$. Let us momentarily denote $\text{ord}_{\mathfrak{q}}( \mu ) = 2a$ and $\text{ord}_{\mathfrak{q}'}( \mu ) = 2b$ for certain integers $a,b \geq 0$. If we further momentarily abbreviate $\rho = \rho( \widetilde{J}_{\mu} / \mathfrak{q} )$, then by definition, $\log_p( \mathcal{F}( \mathfrak{q}^2 \cdot \widetilde{J}_{\mu} ) ) = (a+1)\rho \log_p( q )$, whereas $\log_p( \mathcal{F}( \widetilde{J}_{\mu} ) ) = a \rho \log_p( q )$. Similarly, $\log_p( \mathcal{F}( \mathfrak{q}^2 \cdot \widetilde{J}_{\mu'} ) ) = (b+1)\rho \log_p( q )$, whereas $\log_p( \mathcal{F}( \widetilde{J}_{\mu'} ) ) = b \rho \log_p( q )$. Therefore,
\[
\log_p( \mathcal{F}( \widetilde{J}_{\nu})) - \log_p( \mathcal{F}( \widetilde{J}_{\nu'})) = (a-b) \rho \log_p( q ) = \log_p( \mathcal{F}( \widetilde{J}_{\mu})) - \log_p( \mathcal{F}( \widetilde{J}_{\mu'})).
\]
We have therefore shown that
\[
A_1 = - \frac{1}{2} \! \sum_{\substack{\mu \in (\mathcal{D}_F^{-1})^+ \\ \text{tr}(\nu) = m \\ \text{ord}_{q}( \mu ) \equiv (0,0) \bmod 2 } } (-1)^{\text{ord}_{\mathfrak{p}}(\mu)} \big( \log_p( \mathcal{F}( \widetilde{J}_{\mu})) - \log_p( \mathcal{F}( \widetilde{J}_{\mu'})) \big) =  - \! \sum_{\substack{\mu \in (\mathcal{D}_F^{-1} \mathfrak{q})^+ \\ \text{tr}(\nu) = m \\ \text{ord}_{\mathfrak{q}}( \mu ) \text{ even } }} (-1)^{\text{ord}_{\mathfrak{p}}(\mu)} \log_p( \mathcal{F}( \widetilde{J}_{\mu})),
\]
where we restricted our sum to those $\mu \in (\mathcal{D}_F^{-1} \mathfrak{q})^+$ instead of all $\mu \in \mathcal{D}_F^{-1,+}$ as the value of $\mathcal{F}$ is zero on any non-integral ideal. We may then conclude that
\[
f_{qm} = A_0 + A_1 = - \sum_{\substack{\mu \in (\mathcal{D}_F^{-1} \mathfrak{q})^+ \\ \text{tr}(\nu) = m }} (-1)^{\text{ord}_{\mathfrak{p}}(\mu)} \log_p( \mathcal{F}( \widetilde{J}_{\mu})) = -f_m. \vspace{-6mm}
\]
\end{proof}
\renewcommand{\baselinestretch}{1.00}\normalsize
\begin{theorem}
Suppose that the genus of $X_N / \emph{w}_N$ is zero. Then
\[
\left( \frac{\Theta( D_1, D_2 )}{\Theta_p( D_1, D_2 )} \right)^{\frac{\pm 2}{w_1w_2}} = \pm \prod_{\substack{x^2 < D \\ x^2 \equiv D \emph{ mod } 4N}} \mathfrak{F}\left( \frac{D-x^2}{4N} \right)^{\delta(x)}.
\]
\end{theorem}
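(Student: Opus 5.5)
Our approach is to show that the cusp form $f_{\text{GD}}$ vanishes, and then to read off the formula from the vanishing of its first Fourier coefficient, in the same way as in \cite{daas1}.

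\textbf{Step 1: vanishing of $f_{\text{GD}}$.} By Theorem \ref{fourier}, replacing $\nu$ by $p\nu$ preserves $\widetilde{J}_{\nu}$ and $\nu/\nu'$ and flips the sign $(-1)^{\text{ord}_{\mathfrak{p}}(\nu)}$. Hence $U_p f_{\text{GD}} = -f_{\text{GD}}$. By Proposition \ref{Uq-1}, we also have $U_q f_{\text{GD}} = -f_{\text{GD}}$, where $N = pq$. This step uses that $B$ is ramified at a single prime $q$, which holds for all $N$ in Remark \ref{tableopm}.

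Since $g(X_N/\text{w}_N) = 0$, Proposition \ref{equiv1} says that $\mathcal{S}_2^{\text{new}}(N)$ contains no nonzero $\text{w}_N$-invariant forms. Proposition \ref{no-1} then shows that no nonzero form in $\mathcal{S}_2(N)$ lies in the $-1$-eigenspaces of both $U_p$ and $U_q$. Therefore $f_{\text{GD}} = 0$, and in particular $a_1(f_{\text{GD}}) = 0$.

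\textbf{Step 2: the first Fourier coefficient.} Take $m = 1$ in Theorem \ref{fourier} and split the sum over $\nu$ of trace $p^{2n}$ into the $\mathcal{F}$-part and the $\rho$-part.

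For the $\rho$-part, $\rho(\widetilde{J}_\nu)$ vanishes unless $\text{ord}_{\mathfrak{p}}(\nu) \equiv \text{ord}_{\mathfrak{p}'}(\nu) \bmod 2$. In that case, removing the $p$-part changes nothing as long as both orders are even, because then $\rho(\widetilde{J}_\nu) = \rho(J_\nu)$. When both orders are odd, $J_\nu$ has special primes $\mathfrak{p},\mathfrak{p}'$ and $\rho(J_\nu) = 0$ while $\rho(\widetilde{J}_\nu)$ need not vanish.

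So we organize the limit over traces $p^{2n}$ together with the rescaling $\nu \mapsto p\nu$ and its effect on traces. This should identify the $\rho$-part with
\[
\frac{2}{w_1w_2}\bigl(\log_p\Theta(D_1,D_2) - \log_p\Theta_p(D_1,D_2)\bigr),
\]
using Equation \eqref{rewriteeq} and its analogue for $\Theta_p$ with traces $p^{2n+1}$. The sign $(-1)^{\text{ord}_{\mathfrak{p}}}$ should produce exactly the quotient $\Theta/\Theta_p$, up to the sign ambiguity $\pm$ coming from the choice of reflex prime $\mathfrak{q}$ versus $\mathfrak{q}'$.

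For the $\mathcal{F}$-part, only $\nu$ in which exactly one of $\mathfrak{p},\mathfrak{p}'$ is special contribute. Rescaling by powers of $p$ should reduce the limit to the $\nu = (x+\sqrt{D})/2\sqrt{D}$ of trace $1$, and Remark \ref{Fdef} converts $\mathcal{F}(\widetilde{J}_\nu)$ into $\mathfrak{F}((D-x^2)/4N)^2$. The sign $\delta(x)$ should come from $(-1)^{\text{ord}_{\mathfrak{p}}(\nu)}$ and the residue class of $x$ modulo $2N$. Equating both parts gives the claimed identity of $p$-adic logarithms.

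\textbf{Step 3: from logarithms to the identity.} The left hand side of the theorem lies in the kernel of $\log_p$ only up to roots of unity and powers of $p$. Following \cite{daas1}, we would argue that the right hand side is a $p$-adic unit, since $\mathfrak{F}$ only involves primes other than $p$, and that the left hand side is as well. This gives equality up to $\pm$, which is the sign ambiguity allowed in the statement.

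\textbf{Main obstacle.} The substantive new input, Step 1, is already supplied by Propositions \ref{Uq-1}, \ref{no-1} and \ref{equiv1}. The most delicate bookkeeping is Step 2: matching the limit over traces $p^{2n}$ and $p^{2n+1}$, together with the $p$-deprivation, against $\Theta/\Theta_p$. Here we would mirror the corresponding computation in \cite{daas1}, now valid with $\rho$ counting arbitrary ideal classes thanks to Corollary \ref{finalcount}, which removes the class number one hypothesis on $B$.
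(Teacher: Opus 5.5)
Your Steps 1 and 2 follow the paper. Step 1 uses $U_p f_{\text{GD}} = -f_{\text{GD}}$ from Theorem \ref{fourier}, $U_q f_{\text{GD}} = -f_{\text{GD}}$ from Proposition \ref{Uq-1}, and then vanishing via Propositions \ref{equiv1} and \ref{no-1}. Step 2 identifies $a_1(f_{\text{GD}})$ with the difference of the $p$-adic logarithms of the two sides; the paper also imports this from Subsection 4.4 of \cite{daas1} via Equation \eqref{rewriteeq}.

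The gap is in Step 3. Since $\log_p(p) = 0$, the vanishing of $a_1(f_{\text{GD}})$ says nothing about powers of $p$. Your claim that both sides are $p$-adic units is false in general.

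On the right hand side: $p$ is inert in both $K_1$ and $K_2$, so $\chi(\mathfrak{p}) = -1$. Hence $p$ can be the unique prime with this property dividing $(D-x^2)/4N$ to an odd power, and then $\mathfrak{F}((D-x^2)/4N)$ is a positive power of $p$. For instance, take $N = 6$, $p = 2$, $D_1 = -19$, $D_2 = -43$ and $x = \pm 1$. Then $(D-x^2)/4N = 34 = 2 \cdot 17$ with $17$ split, so $\mathfrak{F}(34) = 4$. Only the $p$-deprived quantities $\mathcal{F}(\widetilde{J}_\nu)$ of Remark \ref{Fdef} avoid $p$, and they agree with $\mathfrak{F}(\cdot)^2$ only up to powers of $p$.

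On the left hand side: each factor $\nu/\nu'$ has $p$-adic valuation $\text{ord}_{\mathfrak{p}}(\nu) - \text{ord}_{\mathfrak{p}'}(\nu)$, which need not vanish.

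What is missing is a separate count of the factors of $p$. The paper computes the $p$-adic valuation of $\Theta(D_1,D_2)/\Theta_p(D_1,D_2)$ directly from Proposition \ref{thetarw} and its analogue for traces $p^{2n+1}$. It then matches this against the exponent of $p$ in $\prod \mathfrak{F}((D-x^2)/4N)^{\delta(x)}$, as in Proposition 4.8 of \cite{daas1}. Only after this does the equality of $p$-adic logarithms become the claimed identity up to sign. Your Step 3 needs to be replaced by this computation.
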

\begin{proof}
From Equation \ref{rewriteeq} and Theorem \ref{fourier} one may deduce, see Subsection 4.4 in \cite{daas1}, that the first Fourier coefficient of $f_{\text{GD}}$ is given by the difference between the $p$-adic logarithms of both sides of the equation we are trying to prove. In addition, we have remarked that $f_{\text{GD}}$ is in fact a $U_p$-eigenform with eigenvalue $-1$, and by Proposition \ref{Uq-1}, it is also a $U_q$-eigenform with eigenvalue $-1$. Under our assumption, it now follows from Propositions \ref{equiv1} and \ref{no-1} that $f_{\text{GD}} = 0$. In particular, its first Fourier coefficient vanishes, and we obtain an equality between the $p$-adic logarithms of both sides of the equation from the theorem. As one may count factors of $p$ on both sides separately, as is done in Proposition 4.8 in \cite{daas1}, directly from the formula from Proposition \ref{thetarw}, we have now proved the equality up to sign, and so we are done.
\end{proof}

\subsection*{Acknowledgements}

The author would like to thank H\aa vard Damm-Johnsen, Mateo Crabit Nicolau, Oana Padurariu and Jan Vonk for helpful discussions and comments on an earlier version of this manuscript. 

The author is also indebted to the Max Planck Institute for Mathematics in Bonn, where some parts of this paper were finalised during a stay in the summer of 2026.

\bibliographystyle{alpha}
\bibliography{biblio}

\end{document}